\newtheorem{theorem}{Theorem}[section]
\newtheorem{lemma}[theorem]{Lemma}
\newtheorem{proposition}[theorem]{Proposition}
\newtheorem{corollary}[theorem]{Corollary}
\newtheorem{definition}[theorem]{Definition}
\newtheorem{example}[theorem]{Example}
\numberwithin{equation}{section}
\DeclareMathOperator{\rad}{rad}
\DeclareMathOperator{\Ob}{Ob}
\DeclareMathOperator{\Hom}{Hom}
\DeclareMathOperator{\End}{End}
\DeclareMathOperator{\Ext}{Ext}
\DeclareMathOperator{\gldim}{gldim}
\title{Representations of modular skew group algebras}
\author{Liping Li}
\thanks{The author would like to thank Sarah Witherspoon, Peter Webb, Fei Xu, and Guodong Zhou for their helps. Witherspoon and Zhou discussed the Koszul properties of skew group algebras with the author during a conference held at University of Washington, Seattle, August 2012. In a personal communication Xu proposed to classify the representation types of transporter categories. The author was motivated to study representations of modular skew group algebras systematically. In the numerous communications Witherspoon, Webb and Xu provided many invaluable suggestions on approaches, proofs, and references. Without these helps this paper would not have appeared.}
\address{Department of Mathematics, University of California, Riverside, CA, 92507.}
\email{lipingli@math.ucr.edu}
\begin{document}

\begin{abstract}
In this paper we study representations of skew group algebras $\Lambda G$, where $\Lambda$ is a connected, basic, finite-dimensional algebra (or a locally finite graded algebra) over an algebraically closed field $k$ with characteristic $p \geqslant 0$, and $G$ is an arbitrary finite group each element of which acts as an algebra automorphism on $\Lambda$. We characterize skew group algebras with finite global dimension or finite representation type, and classify the representation types of transporter categories for $p \neq 2,3$. When $\Lambda$ is a locally finite graded algebra and the action of $G$ on $\Lambda$ preserves grading, we show that $\Lambda G$ is a generalized Koszul algebra if and only if so is $\Lambda$.
\end{abstract}

\maketitle

\section{Introduction}

Let $\Lambda$ be a a connected, basic, finite-dimensional algebra over an algebraically closed filed $k$ with characteristic $p \geqslant 0$, and $G$ is an arbitrary finite group each element of which acts as an algebra automorphism on $\Lambda$. The action of $G$ on $\Lambda$ determines a finite-dimensional algebra $\Lambda G$, called \textit{skew group algebra} of $\Lambda$. Skew group algebras include ordinary group algebras as special examples when the action of $G$ on $\Lambda$ is trivial.

Representations and homological properties of skew group algebras (or more generally, group-graded algebras and crossed products) have been widely studied, see \cite{Boisen,Cohen,Marcus,Passman,Reiten,Witherspoon,Zhong}. When $|G|$, the order of $G$, is invertible in $k$, it has been shown that $\Lambda G$ and $\Lambda$ share many common properties. For example, $\Lambda G$ is of finite representation type (self-injective, of finite global dimension, an Auslander algebra) if and only so is $\Lambda$ (\cite{Reiten}); if $\Lambda$ is piecewise hereditary, so is $\Lambda G$ (\cite{Dionne}). If $\Lambda$ is a positively graded algebra and the action of $G$ preserves grading, then $\Lambda G$ is a Koszul algebra if and only if so is $\Lambda$, and the Yoneda algebra of $\Lambda G$ is also a skew group algebra of the Yoneda algebra of $\Lambda$ (\cite{Martinez}). However, when $G$ is a modular group, many of the above results fail. Therefore, it makes sense to ask under what conditions $\Lambda G$ and $\Lambda$ still share these properties.

As for ordinary group algebras, it is natural to use the induction-restriction procedure and consider the relatively projective property of a $\Lambda G$-module $M$ with respect to the skew group algebra $\Lambda S$, where $S$ is a Sylow $p$-subgroup of $G$. This theory has been described in \cite{Boisen,Marcus}. Using some elementary arguments, we show that $\Lambda G$ is of finite representation type or of finite global dimension if and only if so is $\Lambda S$. Therefore, we turn to study the representations of $\Lambda S$.

Suppose that $\Lambda$ has a set of primitive orthogonal idempotents $E = \{ e_i \} _{i = 1}^n$ such that $\sum _{i=1}^n e_i = 1$ and $E$ is closed under the action of $S$. This condition holds in many cases, for example, when $G$ acts on a finite quiver, a finite poset, or a finite category, etc, and $\Lambda$ is the corresponding path algebra, incidence algebra, or category algebra. Under this assumption, we show that if $\Lambda S$ is of finite representation type (for the situation that $p \geqslant 5$ and $\Lambda$ is not a local algebra) or of finite global dimension, then the action of $S$ on $E$ must be free. In this situation, $\Lambda S$ is the matrix algebra over $\Lambda ^S$, the subalgebra of $\Lambda$ constituted of all elements in $\Lambda$ fixed by the action of $S$. Therefore, $\Lambda S$ is Morita equivalent to $\Lambda ^S$, and we can prove:

\begin{theorem}
Let $\Lambda$ be a basic finite-dimensional $k$-algebra and $G$ be a finite group acting on $\Lambda$. Let $S \leqslant G$ be a Sylow $p$-subgroup. Suppose that there is a set of primitive orthogonal idempotents $E = \{ e_i \} _{i = 1}^n$ such that $E$ is closed under the action of $S$ and $\sum _{i=1}^n e_i = 1$. Then:
\begin{enumerate}
\item $\gldim \Lambda G < \infty$ if and only if $\gldim \Lambda < \infty$ and $S$ acts freely on $E$. Moreover, if $\gldim \Lambda G < \infty$, then $\gldim \Lambda G = \gldim \Lambda = \gldim \Lambda^S$.
\item $\Lambda G$ is an Auslander algebra if and only if so is $\Lambda$ and $S$ acts freely on $E$.
\item Suppose that $p \neq 2, 3$ and $\Lambda$ is not a local algebra. Then $\Lambda G$ is of finite representation type if and only if so is $\Lambda^S$ and $S$ acts freely on $E$. If $\Lambda \cong \Lambda^S \oplus B$ as $\Lambda^S$-bimodules, $\Lambda G$ has finite representation type if and only if so does $\Lambda$ and $S$ acts on $E$ freely.
\end{enumerate}
\end{theorem}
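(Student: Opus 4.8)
The plan is to transport each of the three statements along the chain $\Lambda G \rightsquigarrow \Lambda S \rightsquigarrow \Lambda^S \rightsquigarrow \Lambda$, using the reductions already recorded before the theorem. First I would invoke the induction--restriction reduction to the Sylow $p$-subgroup $S$: since $\Lambda S$ is free as a one-sided $\Lambda$-module, $\Lambda G$ has finite global dimension (resp.\ finite representation type) if and only if $\Lambda S$ does, so it suffices to study $\Lambda S$. For the forward implications I would then quote the established fact that finiteness of $\gldim \Lambda S$ (resp., when $p \geqslant 5$ and $\Lambda$ is not local, finiteness of the representation type of $\Lambda S$) forces $S$ to act freely on $E$. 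The crucial structural step is the analysis of the free case: choosing one idempotent $e_i$ per orbit and setting $e = \sum_i e_i$, one checks that $\sum_{s \in S} s(e) = 1$ with the $s(e)$ pairwise orthogonal idempotents that $S$ permutes freely and transitively. This combinatorial data is what yields $\Lambda S \cong \End_{\Lambda^S}(\Lambda) \cong M_{|S|}(\Lambda^S)$, with $\Lambda$ free of rank $|S|$ as a one-sided $\Lambda^S$-module; in particular $\Lambda S$ is Morita equivalent to $\Lambda^S$, so all three properties of $\Lambda G$ are equivalent to the corresponding properties of $\Lambda^S$.

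For (1) I would first observe $\gldim \Lambda \leqslant \gldim \Lambda S$: restricting a projective $\Lambda S$-resolution along the $\Lambda$-free inclusion $\Lambda \hookrightarrow \Lambda S$ gives a projective $\Lambda$-resolution, and every $\Lambda$-module $V$ is a direct summand of the restriction of the induced module $\Lambda S \otimes_{\Lambda} V$, so $\pd_\Lambda V \leqslant \pd_{\Lambda S}(\Lambda S \otimes_\Lambda V) \leqslant \pd_\Lambda V$. Combined with the Morita equality $\gldim \Lambda S = \gldim \Lambda^S$ this gives $\gldim \Lambda \leqslant \gldim \Lambda^S$. For the reverse inequality I would use that $\Lambda$ is free over $\Lambda^S$: restriction along $\Lambda^S \subseteq \Lambda$ preserves projectives, and for a $\Lambda^S$-module $V$ the restriction of $\Lambda \otimes_{\Lambda^S} V$ is isomorphic to $V^{|S|}$, whence $\pd_{\Lambda^S} V \leqslant \gldim \Lambda$. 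Taking suprema gives $\gldim \Lambda^S \leqslant \gldim \Lambda$, hence equality, and together with the reduction and the Morita step this establishes the equivalence and the equalities $\gldim \Lambda G = \gldim \Lambda = \gldim \Lambda^S$.

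For (2) I would use that being an Auslander algebra (global dimension $\leqslant 2$ and dominant dimension $\geqslant 2$) is Morita invariant, so $\Lambda S$ is an Auslander algebra if and only if $\Lambda^S$ is, and then transfer between $\Lambda^S$ and $\Lambda$ by applying the free-extension comparison of (1) to both the global and the dominant dimension. For (3), the statement phrased in terms of $\Lambda^S$ is immediate from the chain above: under $p \neq 2,3$ and $\Lambda$ not local, finiteness of the representation type of $\Lambda S$ forces freeness and is Morita-equivalent to finiteness of the representation type of $\Lambda^S$. To replace $\Lambda^S$ by $\Lambda$ I would invoke the extra hypothesis $\Lambda \cong \Lambda^S \oplus B$ as $\Lambda^S$-bimodules: this makes $\Lambda^S$ a bimodule direct summand of $\Lambda$, so every $\Lambda^S$-module $V$ is a summand of the restriction of $\Lambda \otimes_{\Lambda^S} V$ (giving that finite representation type of $\Lambda$ forces that of $\Lambda^S$), while separability of the free extension $\Lambda^S \subseteq \Lambda$ gives that every $\Lambda$-module is a summand of $\Lambda \otimes_{\Lambda^S}$ of its restriction (giving the converse). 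The standard induction--restriction counting argument then shows $\Lambda$ and $\Lambda^S$ have the same representation type, completing (3).

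The hard part will be the free-action structure result $\Lambda S \cong M_{|S|}(\Lambda^S)$ together with the freeness and separability of $\Lambda$ over $\Lambda^S$, precisely because $|S|$ is not invertible in $k$ and the usual averaging (Reynolds) operator is unavailable; the orthogonal idempotents $s(e)$ and the trace-one relation $\sum_{s} s(e) = 1$ have to substitute for $|S|^{-1}$ throughout, and verifying that they do so without any invertibility hypothesis is the technical heart of the argument. A secondary point needing care is confirming that the induction--restriction reduction from $G$ to $S$ genuinely preserves the Auslander property, since the reduction was stated only for global dimension and representation type and one must check the dominant dimension transfers as well.
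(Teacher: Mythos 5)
Your skeleton matches the paper's: reduce from $G$ to a Sylow subgroup $S$ by induction--restriction, obtain freeness of the action on $E$ from Proposition 3.3 in the forward directions, and exploit $\Lambda S \cong M_{|S|}(\Lambda^S)$, hence the Morita equivalence between $\Lambda S$ and $\Lambda^S$, in the free case; your treatment of the second half of (3) via the bimodule splitting $\Lambda \cong \Lambda^S \oplus B$ is also the paper's argument. The problem lies in the mechanism you use to move homological information between $\Lambda$ and $\Lambda^S$ in (1) and (2).

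You claim that for a $\Lambda^S$-module $V$ the restriction of $\Lambda \otimes_{\Lambda^S} V$ to $\Lambda^S$ is $V^{|S|}$, and later that the extension $\Lambda^S \subseteq \Lambda$ is separable. Both assertions require $\Lambda$ to be free (or at least to contain $\Lambda^S$ as a direct summand) as a $\Lambda^S$-\emph{bimodule}; one-sided freeness, which is all the matrix-algebra isomorphism provides, does not suffice. The paper's Example 3.6 exhibits a free action of a cyclic group of order $2$ for which $\Lambda$ is free as a left and as a right $\Lambda^S$-module but is not a free bimodule (and in that example $\Lambda^S$ is not even a bimodule summand of $\Lambda$) --- which is precisely why $\Lambda \cong \Lambda^S \oplus B$ appears only as an \emph{extra} hypothesis in the second half of (3). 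Without it, your reverse inequality $\gldim \Lambda^S \leqslant \gldim \Lambda$ and your entire argument for (2), which transfers dominant dimension along $\Lambda^S \subseteq \Lambda$, are unsupported. The paper avoids comparing $\Lambda^S$ with $\Lambda$ directly: under a free action the idempotent $\epsilon = \sum_{i=1}^s e_i$ satisfies $\sum_{g \in S} g(\epsilon) = 1$, so the trivial $\Lambda S$-module $\Lambda$ is projective, the fixed-point functor $\Hom_{\Lambda S}(\Lambda, -)$ is exact, and consequently a $\Lambda S$-module is projective (resp.\ injective) if and only if its restriction to $\Lambda$ is (Proposition 3.5(5)). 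That is the step yielding $\gldim \Lambda S \leqslant \gldim \Lambda$ and the transfer of the Auslander property, obtained by restricting a minimal injective resolution and using $_{\Lambda}\Lambda S \cong \Lambda^{|S|}$; the equality $\gldim \Lambda^S = \gldim \Lambda S$ then comes for free from the Morita equivalence rather than from a direct comparison of $\Lambda^S$ with $\Lambda$. You need this lemma (or an equivalent substitute); the bimodule-level control over $\Lambda^S \subseteq \Lambda$ that your route presupposes is simply not available in general. Your part (3) survives because the direction you attribute to separability already follows from the Morita chain you set up, so the separability claim there is superfluous rather than fatal.
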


Let $\mathcal{P}$ be a finite connected poset on which every element in $G$ acts as an automorphism. The Grothendick construction $\mathcal{T} = G \propto \mathcal{P}$ is called a \textit{transporter category}. It is a \textit{finite EI category}, i.e., every endomorphism in $\mathcal{T}$ is an isomorphism. Representations of transporter categories and finite EI categories have been studied in \cite{Li1,Li2,Webb1,Webb2,Xu1,Xu2}. In a paper \cite{Xu2}, Xu showed that the category algebra $k\mathcal{T}$ is a skew group algebra of the incidence algebra $k\mathcal{P}$. We study its representation type for $p \neq 2, 3$, and it turns out that we can only get finite representation type for very few cases:

\begin{theorem}
Let $G$ be a finite group acting on a connected finite poset $\mathcal{P}$ and suppose that $p \neq 2, 3$. Then the transporter category $\mathcal{T} = G \propto \mathcal{P}$ is of finite representation type if and only if one of the following conditions is true:
\begin{enumerate}
\item $|G|$ is invertible in $k$ and $\mathcal{P}$ is of finite representation type;
\item $\mathcal{P}$ has only one element and $G$ is of finite representation type.
\end{enumerate}
\end{theorem}

When $\Lambda$ is a locally finite graded algebra and the action of $G$ preserves grading, the skew group algebra $\Lambda G$ has a natural grading. Therefore, it is reasonable to characterize its Koszul property. Since $|G|$ might not be invertible in $k$, the degree $0$ part of $\Lambda G$ might not be semisimple, and the classical Koszul theory cannot apply. Thus we use a generalized Koszul theory developed in \cite{Li3,Li4}, which does not demand the semisimple property. Using the generalized definition of Koszul algebras, we prove that $\Lambda G$ is a generalized Koszul algebra if and only if so is $\Lambda$. Moreover, a careful check tells us that the argument in Martinez's paper \cite{Martinez} works as well in the general situation, so we establish the following result.

\begin{theorem}
Let $M$ be a graded $\Lambda G$-module and let $\Gamma = \Ext _{\Lambda} ^{\ast} (M, M)$. Then:
\begin{enumerate}
\item The $\Lambda G$-module $M \otimes_k kG$ is a generalized Koszul $\Lambda G$-module if and only if as a $\Lambda$-module $M$ is generalized Koszul. In particular, $\Lambda G$ is a generalized Koszul algebra over $(\Lambda G)_0 = \Lambda_0 \otimes_k kG$ if and only if $\Lambda$ is a generalized Koszul algebra over $\Lambda_0$.
\item If as a $\Lambda$-module $M$ is generalized Koszul, then $\Ext _{\Lambda G} ^{\ast} (M \otimes_k kG, M \otimes_k kG)$ is isomorphic to the skew group algebra $\Gamma G$ as graded algebras.
\end{enumerate}
\end{theorem}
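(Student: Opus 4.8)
The plan is to move all homological data between $\Lambda$ and $\Lambda G$ through the induction functor $F = \Lambda G \otimes_\Lambda -$ and the restriction functor $R$, using that $\Lambda G$ is free over $\Lambda$ on both sides. First I would record the structural facts I will lean on throughout: as a left $\Lambda$-module $\Lambda G = \bigoplus_{g \in G} \Lambda g$, so both $F$ and $R$ are exact, grading-preserving (the action of $G$ preserves grading and $kG$ lives in degree $0$), and carry graded projectives to graded projectives without changing the degree of generation. In particular $F(M) = \Lambda G \otimes_\Lambda M \cong M \otimes_k kG$ as graded $\Lambda G$-modules, while $RF(M) \cong \bigoplus_{g \in G} {}^gM$ as graded $\Lambda$-modules, where ${}^gM$ is $M$ twisted by $g$. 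Because $M$ carries a $\Lambda G$-structure, multiplication by $g \in G$ furnishes a graded $\Lambda$-isomorphism ${}^gM \cong M$, so in particular $M$ is a graded direct summand of $RF(M)$.

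For part (1), suppose $M$ is generalized Koszul over $\Lambda$ and choose a linear projective resolution $P^\bullet \to M$, i.e. with $P^i$ generated in degree $i$. Applying the exact, projective- and degree-preserving functor $F$ produces a linear projective resolution $F(P^\bullet) \to F(M) = M \otimes_k kG$ over $\Lambda G$, so $M \otimes_k kG$ is generalized Koszul. Conversely, if $M \otimes_k kG$ is generalized Koszul, applying $R$ to its linear resolution yields a linear projective resolution of $RF(M) \cong \bigoplus_g {}^gM$ over $\Lambda$; since the syzygies split as $\Omega^i\!\left(RF(M)\right) \cong \bigoplus_g \Omega^i({}^gM)$ and being generated in degree $i$ passes to direct summands, each $\Omega^i M$ is generated in degree $i$, so $M$ is generalized Koszul. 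The ``in particular'' clause is the case $M = \Lambda_0$, which is a $\Lambda G$-module since $G$ preserves grading, giving $M \otimes_k kG = (\Lambda G)_0 = \Lambda_0 \otimes_k kG$. The step that needs care here is verifying that $(\Lambda G)_0 = \Lambda_0 \otimes_k kG$ satisfies the standing hypotheses of the generalized Koszul theory of \cite{Li3,Li4} (notably the required condition on the degree-$0$ part), so that ``generated in degree $i$'' is the correct characterization on both sides; I would deduce this from the fact that $(\Lambda G)_0$ is free over $\Lambda_0$ on both sides.

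For part (2) I would compute $\Ext_{\Lambda G}^{\ast}(M \otimes_k kG, M \otimes_k kG)$ from a linear $\Lambda$-projective resolution $P^\bullet \to M$. Since $F$ carries $P^\bullet$ to a projective resolution of $M \otimes_k kG$, the adjunction $F \dashv R$ gives, compatibly with differentials,
\[
\Hom_{\Lambda G}(F P^\bullet, F M) \cong \Hom_{\Lambda}(P^\bullet, R F M) \cong \bigoplus_{g \in G} \Hom_{\Lambda}(P^\bullet, {}^gM),
\]
and passing to cohomology yields a graded $k$-space isomorphism $\Ext_{\Lambda G}^{\ast}(M \otimes_k kG, M \otimes_k kG) \cong \bigoplus_{g \in G} \Ext_{\Lambda}^{\ast}(M, {}^gM) \cong \Gamma \otimes_k kG$, where the final step uses the identifications ${}^gM \cong M$. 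It then remains to promote this to an isomorphism of graded algebras $\Ext_{\Lambda G}^{\ast}(M \otimes_k kG, M \otimes_k kG) \cong \Gamma G$.

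This last point is the heart of the argument and the main obstacle: one must show that the Yoneda product, transported across the adjunction and the splitting $RFM \cong \bigoplus_g {}^gM$, matches the skew group multiplication $(\xi \otimes g)(\eta \otimes h) = \xi \cdot {}^g\eta \otimes gh$ on $\Gamma G$, that is, that composing a class in the $g$-component with one in the $h$-component lands in the $gh$-component after applying the $g$-twist to the second factor. I would establish this by tracking the $G$-action through the unit and counit of the adjunction and the canonical isomorphisms ${}^gM \cong M$, exactly as in \cite{Martinez}, checking along the way that no semisimplicity of the degree-$0$ part is needed. Because the generalized Koszul property forces both $\Gamma$ and the big Ext algebra to be generated in degrees $0$ and $1$, it suffices to verify this compatibility on the generating degrees, which reduces the computation to a finite, explicit check.
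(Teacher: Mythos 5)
Your proposal is correct and follows essentially the same route as the paper: part (1) is proved by pushing a linear resolution of $M$ up to $\Lambda G$ (the paper packages your induction functor $\Lambda G\otimes_\Lambda -$ as $-\otimes_k kG$ with diagonal $G$-action, after first checking that the terms of the minimal $\Lambda$-resolution are themselves $\Lambda G$-modules) and by restricting a linear resolution of $M\otimes_k kG$ and using $_{\Lambda}(M\otimes_k kG)\cong(_{\Lambda}M)^{|G|}$ for the converse. For part (2) the paper likewise establishes only the additive isomorphism $\Ext_{\Lambda}^{s}(M,N)\otimes_k kG\cong\Ext_{\Lambda G}^{s}(M\otimes_k kG,N\otimes_k kG)$ (your adjunction computation, in explicit form) and defers the compatibility with Yoneda products to Mart\'inez's argument, exactly as you propose.
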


This paper is organized as follows. In Section 2 we describe the induction-restriction procedure and use it to show that $\Lambda G$ and $\Lambda S$ share some common properties. In Section 3 we study representations of the skew group algebra $\Lambda S$ and prove the first theorem. Transporter categories and its representation type are considered in Section 4, where we prove the second theorem. In the last section we briefly describe a generalized Koszul theory and apply it to characterize the Koszul property of $\Lambda G$ when $\Lambda$ is graded and the action of $G$ preserves grading.

We introduce the notation and convention here. For $\lambda \in \Lambda$ and $g \in G$, we denote the image of $\lambda$ under the action of $g$ by $^g\lambda$ or $g(\lambda)$. The skew group algebra $\Lambda G$ as a vector space is $\Lambda \otimes_k kG$, and the multiplication is defined by $(\lambda \otimes g) \cdot (\mu \otimes h) = \lambda (^g\mu) \otimes gh$. To simplify the notation, we denote $\lambda \otimes g$ by $\lambda g$. Correspondingly, the multiplication is $(\lambda g) \cdot (\mu h) = \lambda (^g\mu) gh$ or $\lambda g(\mu) gh$. The identities of $\Lambda$ and $G$ are denoted by $1$ and $1_G$ respectively.

All modules we consider in this paper are left finitely generated modules (or left locally finite graded modules when $\Lambda$ is a locally finite graded algebra). Composition of maps, morphisms and actions is from right to left. To simplify the expression of statements, we view the zero module as a projective or a free module. When $p = 0$, the Sylow $p$-group is the trivial group.

\section{Induction and Restriction}

Let $H \leqslant G$ be a subgroup of $G$. Then $\Lambda H$ is a subalgebra of $\Lambda G$. As for group algebras, we define induction and restriction. Explicitly, for a $\Lambda H$ module $V$, the induced module is $V \uparrow _H^G = \Lambda G \otimes _{\Lambda H} V$, where $\Lambda G$ acts on the left side. Every $\Lambda G$-module $M$ can be viewed as a $\Lambda H$-module. We denote this restricted module by $M \downarrow _H^G$. Observe that $\Lambda G$ is a free $\Lambda H$-module. Therefore, these two functors are exact, and perverse projective modules.

The following property is well known, see for example Lemmas 2.3 and 2.4 in \cite{Boisen} and \cite{ARS}.

\begin{proposition}
Let $V$ and $M$ be a $\Lambda H$-module and a $\Lambda G$-module respectively. Then:
\begin{enumerate}
\item $V$ is a summand of $V \uparrow _H^G \downarrow _H^G$;
\item if $|G:H|$ is invertible in $k$, then $M$ is a summand of $M \downarrow _H^G \uparrow _H^G$.
\end{enumerate}
\end{proposition}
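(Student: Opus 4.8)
The plan is to treat the two parts separately, since (1) is a purely structural statement about $\Lambda G$ as a bimodule over $\Lambda H$, whereas (2) requires an averaging argument that exploits the invertibility of $|G:H|$.

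For part (1), I would first decompose $\Lambda G$ as a $(\Lambda H, \Lambda H)$-bimodule. Choosing representatives of the left cosets in $G/H$ and setting $\Lambda(gH) = \bigoplus_{h \in H} \Lambda\, gh$, one obtains $\Lambda G = \bigoplus_{gH} \Lambda(gH)$ as a right $\Lambda H$-module. The essential point is to check that the coarser decomposition $\Lambda G = \Lambda H \oplus C$, where $C = \bigoplus_{gH \neq H} \Lambda(gH)$ collects all non-identity cosets, is actually a decomposition of \emph{bimodules}. Right stability is clear; for left stability one computes that $h \cdot \Lambda(gH) \subseteq \Lambda((hg)H)$ and uses $(hg)H = H \iff g \in H$ to conclude that both $\Lambda H$ and $C$ are left $\Lambda H$-stable. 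Tensoring this bimodule decomposition with $V$ over $\Lambda H$ then yields $V \uparrow_H^G \downarrow_H^G \cong (\Lambda H \otimes_{\Lambda H} V) \oplus (C \otimes_{\Lambda H} V) \cong V \oplus (C \otimes_{\Lambda H} V)$ as left $\Lambda H$-modules, exhibiting $V$ as a summand. Concretely the inclusion is $v \mapsto (1 \cdot 1_G) \otimes v$ and the retraction is induced by the bimodule projection $\Lambda G \twoheadrightarrow \Lambda H$. Note that this part uses no invertibility hypothesis.

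For part (2), I would split the natural multiplication epimorphism $\varepsilon \colon M \downarrow_H^G \uparrow_H^G = \Lambda G \otimes_{\Lambda H} M \to M$, $x \otimes m \mapsto xm$, which is visibly $\Lambda G$-linear and surjective. Fixing left coset representatives $g_1, \dots, g_r$ of $G/H$ with $r = |G:H|$, I would define $s \colon M \to \Lambda G \otimes_{\Lambda H} M$ by $s(m) = r^{-1} \sum_{i} g_i \otimes g_i^{-1} m$, where $r^{-1}$ makes sense precisely because $|G:H|$ is invertible in $k$. Three checks are needed: that $s$ is independent of the chosen representatives (replacing $g_i$ by $g_i h$ and moving $h \in \Lambda H$ across the tensor leaves each term unchanged); that $\varepsilon \circ s = \mathrm{id}_M$ (immediate, since each summand contributes $m$, and the factor $r$ cancels $r^{-1}$); and that $s$ is $\Lambda G$-linear. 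The $\Lambda G$-linearity splits into linearity over the group elements $g' \in G$, which follows because $\{g' g_i\}$ is again a full set of coset representatives, and linearity over $\Lambda$, which is the delicate computation: one must move ${}^{g_i^{-1}}\lambda \in \Lambda \subseteq \Lambda H$ across the tensor and use the twisted multiplication $g_i \cdot {}^{g_i^{-1}}\lambda = \lambda g_i$ in $\Lambda G$ to identify $g_i \otimes g_i^{-1}\lambda m$ with $\lambda g_i \otimes g_i^{-1} m$.

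I expect the main obstacle to be the bookkeeping in part (2): verifying that $s$ is well defined and $\Lambda G$-linear requires careful manipulation of the tensor relations over $\Lambda H$ together with the skew multiplication rule $(\lambda g)(\mu h) = \lambda\,({}^{g}\mu)\, gh$, and it is exactly here — in normalizing the averaging operator — that the hypothesis that $|G:H|$ is invertible is used. Once $s$ is shown to be a $\Lambda G$-linear section of $\varepsilon$, the module $M$ is a direct summand of $M \downarrow_H^G \uparrow_H^G$, which completes the proof.
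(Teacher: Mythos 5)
Your proposal is correct. Note, however, that the paper itself does not prove this proposition at all: it simply records it as well known and cites Lemmas 2.3 and 2.4 of Boisen and the Auslander--Reiten--Smal{\o} paper on Galois coverings. So there is no in-paper argument to compare against; what you have written is the standard proof that those references contain. Part (1) via the $(\Lambda H,\Lambda H)$-bimodule decomposition $\Lambda G=\Lambda H\oplus C$ is sound, and you correctly identify the one subtle point, namely that left multiplication by $h\in H$ does not fix each coset piece $\Lambda(gH)$ individually but sends it to $\Lambda((hg)H)$, so it merely permutes the non-identity cosets; this is exactly what is needed for $C$ to be left stable as a whole. Part (2) via the averaged section $s(m)=r^{-1}\sum_i g_i\otimes g_i^{-1}m$ of the multiplication map is also correct, and the computation $\lambda g_i=g_i\cdot({}^{g_i^{-1}}\lambda)$ with ${}^{g_i^{-1}}\lambda\in\Lambda\subseteq\Lambda H$ is precisely the step where the skew multiplication enters and where the argument would be checked most carefully. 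The proof is complete modulo these routine verifications, all of which you have flagged accurately.
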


The above proposition immediately implies:

\begin{corollary}
Let $H \leqslant G$ be a subgroup of $G$ and $M$ be a $\Lambda G$-module.
\begin{enumerate}
\item If $\Lambda G$ is of finite representation type, so is $\Lambda H$, and in particular, $\Lambda$ is of finite representation type.
\item If the global dimension $\gldim \Lambda G < \infty$, then $\gldim \Lambda H < \infty$, and in particular, $\gldim \Lambda < \infty$.
\item If $Q$ is an injective $\Lambda G$-module, then $Q \downarrow _H^G$ is an injective $\Lambda H$-module.
\end{enumerate}
Suppose that $|G: H|$ is invertible. Then:
\begin{enumerate} \setcounter{enumi}{3}
\item$\Lambda G$ is of finite representation type if and only if so is $\Lambda H$.
\item $M$ is a projective $\Lambda G$-module if and only if $M \downarrow _H^G$ is a projective $\Lambda H$-module. In particular, $\gldim \Lambda G = \gldim \Lambda H$.
\item $Q$ is an injective $\Lambda G$-module if and only if $Q \downarrow _H^G$ is an injective $\Lambda H$-module.
\end{enumerate}
\end{corollary}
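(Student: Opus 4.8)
The whole corollary follows from the Proposition once it is combined with two structural facts about $\uparrow_H^G$ and $\downarrow_H^G$: first, that both are exact and send projectives to projectives (already recorded above); and second, the two tensor-hom adjunctions $\uparrow_H^G \,\dashv\, \downarrow_H^G$ and $\downarrow_H^G \,\dashv\, \Hom_{\Lambda H}(\Lambda G, -)$. Since $\Lambda G$ is free of finite rank over $\Lambda H$ on both sides, $\Lambda H \subseteq \Lambda G$ is a Frobenius extension, so induction and coinduction $\Hom_{\Lambda H}(\Lambda G, -)$ are naturally isomorphic; I would establish this identification at the outset, since it is what transports injectivity statements. Granting it, a right adjoint of an exact functor preserves injectives, so both $\downarrow_H^G$ (right adjoint to the exact $\uparrow_H^G$) and $\uparrow_H^G \cong \Hom_{\Lambda H}(\Lambda G, -)$ (right adjoint to the exact $\downarrow_H^G$) preserve injectives.

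For part (1) I would argue by a finiteness count. If $\Lambda G$ has only finitely many indecomposables $X_1, \dots, X_r$, then each $X_i \downarrow_H^G$ is a finite direct sum of indecomposable $\Lambda H$-modules; let $\mathcal{S}$ be the finite set of all indecomposables occurring this way. For any indecomposable $\Lambda H$-module $V$, decompose $V \uparrow_H^G$ into the $X_i$; then $V \uparrow_H^G \downarrow_H^G$ is a direct sum of modules in $\mathcal{S}$, and since $V$ is a summand of it by Proposition (1), Krull--Schmidt forces $V \in \mathcal{S}$. Hence $\Lambda H$ has finite representation type, and $H = \{1_G\}$ gives the statement for $\Lambda$. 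For part (2) I would take a $\Lambda H$-module $V$, choose a finite projective $\Lambda G$-resolution of $V \uparrow_H^G$, restrict it (still exact, still projective termwise) to get $\pd_{\Lambda H}(V \uparrow_H^G \downarrow_H^G) \leqslant \gldim \Lambda G$, and conclude $\pd_{\Lambda H} V \leqslant \gldim \Lambda G$ because $V$ is a summand. Part (3) is then immediate from the injective-preservation of $\downarrow_H^G$ noted above.

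For the ``if and only if'' statements I would invoke Proposition (2), which needs $|G:H|$ invertible. Part (4) is the converse of (1): assuming $\Lambda H$ has finite type, every indecomposable $\Lambda G$-module $M$ is a summand of $M \downarrow_H^G \uparrow_H^G$, and the same Krull--Schmidt bookkeeping---now inducing the finitely many indecomposable $\Lambda H$-modules---places $M$ in a fixed finite list. For part (5), if $M \downarrow_H^G$ is projective then so is $M \downarrow_H^G \uparrow_H^G$, and $M$, being a summand of it, is projective; the converse is preservation of projectives under restriction. This upgrades to $\pd_{\Lambda G} M = \pd_{\Lambda H}(M \downarrow_H^G)$: restricting a $\Lambda G$-resolution gives $\pd_{\Lambda H}(M\downarrow_H^G) \leqslant \pd_{\Lambda G} M$, while if $\pd_{\Lambda H}(M\downarrow_H^G) = d$ then the $d$-th syzygy of $M$ has projective restriction, hence is projective by (5), giving $\pd_{\Lambda G} M \leqslant d$; taking suprema yields $\gldim \Lambda G = \gldim \Lambda H$. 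Finally part (6) mirrors (5) with injectives: if $Q \downarrow_H^G$ is injective then $Q \downarrow_H^G \uparrow_H^G$ is injective (induction preserves injectives by the Frobenius identification), and $Q$, being a summand of it, is injective; the converse is part (3).

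The step I expect to cost the most care is the injective half, parts (3) and (6). Exactness and preservation of projectives are handed to us, but preservation of injectives is not, and the cleanest route is the observation that $\Lambda H \subseteq \Lambda G$ is a Frobenius extension, so that induction and coinduction agree. Verifying this concretely---exhibiting the dual bases from a transversal of $H$ in $G$, compatibly with the twisted multiplication $(\lambda g)(\mu h) = \lambda\, {}^{g}\mu\, gh$---is the one genuinely computational point; everything else is formal plus Krull--Schmidt.
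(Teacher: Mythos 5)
Your proposal is correct, and for parts (1), (2), (4), (5) --- which the paper dismisses as ``straightforward'' --- your Krull--Schmidt counting and syzygy arguments are exactly the standard route one would fill in. For part (3) your argument (restriction is right adjoint to the exact induction functor, hence preserves injectives) is the paper's own argument via the Nakayama relations, just phrased in adjoint-functor language. The genuine divergence is in part (6). The paper never needs induction to preserve injectives: it works at the level of functors, observing that $\Hom_{\Lambda G}(-,Q)$ is a natural summand of $\Hom_{\Lambda G}(-\downarrow_H^G\uparrow_H^G, Q) \cong \Hom_{\Lambda H}(-\downarrow_H^G, Q\downarrow_H^G)$, which is exact as a composite of the exact restriction functor with $\Hom_{\Lambda H}(-, Q\downarrow_H^G)$ once (3) is known. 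You instead work at the level of modules ($Q$ is a summand of $Q\downarrow_H^G\uparrow_H^G$) and therefore must show that $\uparrow_H^G$ preserves injectives, which forces you to establish that $\Lambda H \subseteq \Lambda G$ is a Frobenius extension so that induction agrees with coinduction. That claim is true --- the dual bases $\{t\}$, $\{t^{-1}\}$ over a transversal of $H$ in $G$, together with the projection of $\Lambda G = \bigoplus_{g} \Lambda g$ onto $\bigoplus_{h \in H}\Lambda h$, do the job, and the twisted multiplication causes no trouble --- so your proof is complete, and it buys you the stronger standing fact that induction preserves injectives; but it is more machinery than the statement requires, and the paper's functorial summand trick is the more economical path.
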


\begin{proof}
The proof of these facts are straightforward. The reader can check them by using the same technique for regular group algebras. We only show (3) and (6).

(3): The $\Lambda H$-module $Q \downarrow _H^G $ is injective if and only if the functor $\Hom _{\Lambda H} (-, Q \downarrow _H^G)$ is exact. By the Nakayama relations, this is true if and only if $\Hom _{\Lambda G} (- \uparrow _H^G, Q)$, the composite of the induction functor and $\Hom _{\Lambda G} (-, Q)$, is exact, which is obvious since both functors are exact.

Now suppose that $n = |G:H|$ is invertible in $k$.

(6): The $\Lambda G$-module $Q$ is injective if and only if $\Hom _{\Lambda G} (-, Q)$ is exact. Note that this functor is a summand of the functor $\Hom _{\Lambda G} (- \downarrow _H^G \uparrow _H^G, Q)$ since $|G: H|$ is invertible in $k$. Therefore, it suffices to show the exactness of $\Hom _{\Lambda G} (- \downarrow _H^G \uparrow _H^G, Q)$. By the Nakayama relations, it is isomorphic to $\Hom _{\Lambda H} (- \downarrow _H^G, Q \downarrow _H^G)$, the composite functor of $\downarrow _H^G$ and $\Hom _{\Lambda H} (-, Q \downarrow _H^G)$. The conclusion follows from the observation that the restriction functor is exact and $Q \downarrow _H^G$ is injective.
\end{proof}

In the case that $kG$ is semisimple, we get the following well known result.

\begin{corollary}
If $|G|$ is invertible in $k$, then $\Lambda G$ is of finite representation type if and only if so is $\Lambda$; a $\Lambda G$-module $M$ is projective if and only if it is a projective $\Lambda$-module; and $\gldim \Lambda G = \gldim \Lambda$.
\end{corollary}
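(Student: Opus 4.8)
The plan is to derive this statement as the special case $H = \{1_G\}$ of the preceding corollary. First I would record two elementary identifications. For the trivial subgroup $H = \{1_G\}$ the skew group algebra $\Lambda H = \Lambda \otimes_k k\{1_G\}$ is canonically isomorphic to $\Lambda$, and under this isomorphism the restriction functor $\downarrow_{\{1_G\}}^G$ is nothing but the forgetful functor sending a $\Lambda G$-module to its underlying $\Lambda$-module. Second, the index $|G : \{1_G\}| = |G|$ is invertible in $k$ by hypothesis, so parts (4) and (5) of the preceding corollary are available.

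Granting these, the three assertions fall out at once. Part (4) applied to $H = \{1_G\}$ gives that $\Lambda G$ has finite representation type if and only if $\Lambda H = \Lambda$ does. Part (5) says that a $\Lambda G$-module $M$ is projective if and only if $M \downarrow_{\{1_G\}}^G$ is projective over $\Lambda H = \Lambda$; since $M \downarrow_{\{1_G\}}^G$ is precisely $M$ regarded as a $\Lambda$-module, this is the second claim. Finally the ``in particular'' clause of part (5) yields $\gldim \Lambda G = \gldim \Lambda H = \gldim \Lambda$.

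I expect no genuine obstacle here: the entire content of the result is already contained in the preceding corollary, whose proof rests on the Nakayama relations together with the summand statement of the proposition above. The only step requiring a moment's attention is the verification that restriction to the trivial subgroup recovers the plain $\Lambda$-module structure, which guarantees that the projectivity and global-dimension conclusions are genuinely statements about $\Lambda$ itself rather than about some intermediate restricted algebra.
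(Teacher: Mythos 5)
Your proposal is correct and matches the paper's own proof, which simply says ``Let $S = 1$ and apply the previous corollary''; you have just spelled out the same specialization to the trivial subgroup in more detail. No issues.
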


\begin{proof}
Let $S = 1$ and apply the previous corollary.
\end{proof}

\section{Actions of $p$-groups}

From the previous section we know that the action of a Sylow $p$-subgroup $S \leqslant G$ plays a crucial role in determining the representation type and the global dimension of $\Lambda G$. Explicitly, $\Lambda G$ and $\Lambda S$ have the same representation type and the same global dimension. Throughout this section let $S$ be a $p$-group. We suppose that there is set $E = \{ e_i \} _{i=1}^n$ of primitive orthogonal idempotents in $\Lambda$ which is \textit{complete} (i.e., $\sum_{i=1}^n e_i = 1$) and is \textit{closed} under the action of $S$. Although this is not always true, in many cases the action of $S$ on $\Lambda$ is induced by an action of $S$ on some discrete structures such as quivers, posets, etc, and this condition is satisfied.

\begin{lemma}
The set $E$ is also a complete set of primitive orthogonal idempotents of $\Lambda S$, where we use $e_i$ rather than $e_i 1_S$ to simplify the notation.
\end{lemma}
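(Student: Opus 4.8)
The plan is to verify directly that the elements $e_i 1_S$ are orthogonal idempotents summing to $1$ in $\Lambda S$, and then to spend the real effort on showing that each is \emph{primitive}. Orthogonality and completeness are immediate from the multiplication rule and the triviality of the $1_S$-action: $(e_i 1_S)(e_j 1_S) = e_i \, {}^{1_S}e_j \, 1_S = e_i e_j 1_S = \delta_{ij} e_i 1_S$, and $\sum_i e_i 1_S = (\sum_i e_i) 1_S = 1_S$, the identity of $\Lambda S$. From now on I write $e_i$ for $e_i 1_S$ and concentrate on primitivity, which, since $\Lambda S$ is a finite-dimensional $k$-algebra, is equivalent to the corner ring $e_i (\Lambda S) e_i$ being local.

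The key device will be to pass to a suitable ``semisimple-by-permutation'' quotient. Since every $g \in G$ acts on $\Lambda$ as an algebra automorphism, each $g$ preserves $\rad \Lambda$; hence $N := (\rad \Lambda) S$, the $k$-span of the elements $r s$ with $r \in \rad\Lambda$ and $s \in S$, is a two-sided ideal of $\Lambda S$. First I would check that $N$ is nilpotent: if $(\rad\Lambda)^m = 0$, then in a product $(r_1 s_1)\cdots(r_m s_m)$ the $\Lambda$-part equals $r_1 \, {}^{s_1}r_2 \cdots \, {}^{s_1\cdots s_{m-1}}r_m$, which lies in $(\rad\Lambda)^m = 0$ because $\rad\Lambda$ is $S$-stable; thus $N^m = 0$ and $N \subseteq \rad(\Lambda S)$. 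Because $\Lambda$ is basic and $k$ is algebraically closed, $\bar\Lambda := \Lambda/\rad\Lambda \cong \prod_{i=1}^n k\bar e_i$, and $S$ acts on $\bar\Lambda$ by permuting the orthogonal idempotents $\bar e_i$ exactly as it permutes $E$. Consequently $\Lambda S / N \cong \bar\Lambda S$, a skew group algebra of a commutative semisimple algebra under a permutation action.

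The heart of the argument is then a direct computation of the corner ring in this quotient. Writing $S_i = \{ s \in S : s(e_i) = e_i \}$ for the stabilizer of $e_i$ and using $\bar e_i \bar\Lambda \bar e_j = \delta_{ij} k \bar e_i$, I obtain
\[
\bar e_i (\bar\Lambda S) \bar e_i = \bigoplus_{s \in S} \bar e_i \, \bar\Lambda \, {}^{s}\bar e_i \, s = \bigoplus_{s \in S_i} k \, \bar e_i s,
\]
since $\bar e_i \bar\Lambda \, {}^{s}\bar e_i = \bar e_i \bar\Lambda \bar e_{s(i)}$ vanishes unless $s \in S_i$. For $s, t \in S_i$ one has $(\bar e_i s)(\bar e_i t) = \bar e_i \, {}^{s}\bar e_i \, st = \bar e_i (st)$, so $\bar e_i s \mapsto s$ is an algebra isomorphism $\bar e_i(\bar\Lambda S)\bar e_i \xrightarrow{\sim} k S_i$. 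Here $S_i$ is a $p$-group and $\operatorname{char} k = p$, so $k S_i$ is local; hence $\bar e_i$ is primitive in $\bar\Lambda S$.

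Finally I would lift this back to $\Lambda S$. The surjection $\Lambda S \twoheadrightarrow \bar\Lambda S$ restricts to a surjection $e_i(\Lambda S)e_i \twoheadrightarrow \bar e_i(\bar\Lambda S)\bar e_i \cong k S_i$ whose kernel $e_i N e_i$ is nilpotent; since a finite-dimensional algebra that is a nilpotent extension of a local ring is itself local (idempotents lift modulo the nilpotent ideal $N$), the ring $e_i(\Lambda S)e_i$ is local and $e_i$ is primitive in $\Lambda S$. I expect the main obstacle to be the second and third steps together, namely establishing that $N = (\rad\Lambda)S$ is a nilpotent two-sided ideal with the clean quotient $\bar\Lambda S$ and keeping careful track of how the $S$-action twists products; once these are in place, the corner-ring computation and the lifting argument are essentially formal.
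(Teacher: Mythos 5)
Your proposal is correct and follows essentially the same route as the paper: reduce modulo the nilpotent ideal $(\rad\Lambda)S$ to the skew group algebra $\overline{\Lambda}S$ of the semisimple quotient, identify the corner ring $\overline{e}_i(\overline{\Lambda}S)\overline{e}_i$ with the group algebra of the stabilizer of $e_i$ in $S$, and conclude that it is local because this stabilizer is a $p$-group in characteristic $p$. The only cosmetic difference is that you lift locality of the corner ring directly through the nilpotent kernel, whereas the paper phrases the reduction as a comparison of the cardinalities of complete sets of primitive orthogonal idempotents in $\Lambda S$ and in $\overline{\Lambda}S$.
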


\begin{proof}
It suffices to show that every element in $E$ is primitive as an idempotent in $\Lambda S$. This is equivalent to the statement that a (and hence every) complete set of primitive orthogonal idempotents in $\Lambda S$ has precisely $n$ elements.

Let $\mathfrak{r}$ be the radical of $\Lambda$ and let $\overline{\Lambda} = \Lambda / \mathfrak{r}$. Then $\mathfrak{r} \otimes_k kS$ is a two-sided ideal of $\Lambda S$ since $\mathfrak{r}$ is a two-sided ideal of $\Lambda$ and every element $g \in S$ sends $\mathfrak{r}$ into $\mathfrak{r}$. Moreover, this ideal is nilpotent, and hence is contained in the radical of $\Lambda S$. Therefore, a complete set of primitive orthogonal idempotents in $\Lambda S$ and a complete set of primitive orthogonal idempotents in $\Lambda S / \mathfrak{r} S = (\Lambda \otimes_k kS) / (\mathfrak{r} \otimes_k kS) \cong \overline{\Lambda} \otimes_k kS = \overline{\Lambda} S$ (which is also a skew group algebra) have the same cardinal. Since elements in $\overline{E} = \{ \overline{e}_i \} _{i=1}^n$ are nonzero pairwise orthogonal idempotents in $\overline{\Lambda} S$ whose sum equals $1_{\overline{\Lambda} S}$, where $\overline{e}_i$ is the image of $e_i$ in the quotient algebra, the conclusion follows if each $\overline{e}_i$ is primitive. This is true if and only if $\overline{e}_i (\overline{\Lambda} S) \overline{e}_i$ is a local algebra.

Note that $\overline{\Lambda} \cong \oplus_{i=1}^n k \overline{e}_i$. Therefore, for a fixed $i$, we have
\begin{equation*}
\overline{e}_i (\overline{\Lambda} S) \overline{e}_i = \overline{e}_i (kS) \overline{e}_i =\bigoplus _{g \in S} k \overline{e}_i g \overline{e}_i = \bigoplus _{g \in S} k \overline{e}_i \overline{g(e_i)} g.
\end{equation*}
Let $H = \{ g \in S \mid g(e_i) = e_i \}$. Then $H$ is a subgroup of $S$. We have:
\begin{equation*}
\overline{e}_i (\overline{\Lambda} S) \overline{e}_i = \bigoplus _{g \in S} k \overline{e}_i \overline{g(e_i)} g = \bigoplus _{g \in H} k \overline{e}_i g \cong kH,
\end{equation*}
where $kH$ is a regular group algebra since every $g \in H$ fixes $\overline{e}_i$. Clearly, $kH$ is local since as a subgroup of $S$, $H$ is a $p$-group as well. This finishes the proof.
\end{proof}

From this lemma we know that every indecomposable $\Lambda S$-module can be described as follows (up to isomorphism):
\begin{equation*}
\Lambda S e_i = \bigoplus _{g \in S} \Lambda g(e_i) \otimes_k g, \quad 1 \leqslant i \leqslant n.
\end{equation*}

Since $E$ is closed under the action of $S$, we take $\{e_1, e_2, \ldots, e_s \}$ to be a set of representatives of $S$-orbits, and set $\epsilon = \sum _{i=1}^s e_i$. This is an idempotent of $\Lambda$, and hence an idempotent of $\Lambda S$.

The next lemma is described in \cite{Cohen} for the situation that $S$ acts freely on $E$, see Lemma 3.1 in that paper. We observe that the result is still true even if the action is not free, and gives a sketch of the proof.

\begin{lemma}
Let $S, \Lambda, E$ and $\epsilon$ be as defined before. Then $\Lambda S$ is Morita equivalent to $\epsilon \Lambda S \epsilon$.
\end{lemma}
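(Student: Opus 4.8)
The plan is to invoke the standard Morita-theoretic criterion for an idempotent: for a finite-dimensional algebra $A$ and an idempotent $\epsilon \in A$, the algebras $A$ and $\epsilon A \epsilon$ are Morita equivalent if and only if $\epsilon$ is a \emph{full} idempotent, i.e.\ $A \epsilon A = A$, equivalently the projective left module $A\epsilon$ is a progenerator for $A$-mod. Since the summands of $\Lambda S \epsilon = \bigoplus_{i=1}^{s} \Lambda S e_i$ are projective, it suffices to show that every indecomposable projective $\Lambda S$-module is isomorphic to a summand of $\Lambda S \epsilon$. By the previous lemma the indecomposable projective $\Lambda S$-modules are exactly $\Lambda S e_1, \dots, \Lambda S e_n$ up to isomorphism, and $\epsilon = e_1 + \cdots + e_s$ collects one idempotent from each $S$-orbit on $E$. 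Hence the whole statement reduces to a single claim: if $e_i$ and $e_j$ lie in the same $S$-orbit, then $\Lambda S e_i \cong \Lambda S e_j$ as left $\Lambda S$-modules.

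To prove this claim I would write $e_j = {}^g e_i$ for some $g \in S$ and exhibit the isomorphism explicitly. Working in $\Lambda S$ with the multiplication $(\lambda g)(\mu h) = \lambda\,{}^g\mu\, gh$, I set
\[
 u = e_j\, g \in e_j (\Lambda S) e_i, \qquad v = e_i\, g^{-1} \in e_i (\Lambda S) e_j .
\]
A direct computation using ${}^g e_i = e_j$ and ${}^{g^{-1}} e_j = e_i$ gives $vu = e_i$ and $uv = e_j$. Consequently right multiplication by $v$ defines a left $\Lambda S$-homomorphism $\Lambda S e_i \to \Lambda S e_j$, right multiplication by $u$ defines one in the opposite direction, and the relations $vu = e_i$, $uv = e_j$ show these two maps are mutually inverse. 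This establishes the claim.

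The claim shows that each of $\Lambda S e_{s+1}, \dots, \Lambda S e_n$ is isomorphic to one of $\Lambda S e_1, \dots, \Lambda S e_s$, so every isomorphism class of indecomposable projective $\Lambda S$-module already occurs as a summand of $\Lambda S \epsilon$. Therefore $\Lambda S \epsilon$ is a progenerator, $\epsilon$ is a full idempotent, and $\Lambda S$ is Morita equivalent to $\epsilon (\Lambda S) \epsilon$; concretely the equivalence is realized through $\End_{\Lambda S}(\Lambda S \epsilon)^{\mathrm{op}} \cong \epsilon (\Lambda S) \epsilon$.

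The only step demanding genuine care — the main obstacle — is the verification that $vu = e_i$ and $uv = e_j$. Because the multiplication in $\Lambda S$ twists the $\Lambda$-part by the group action, one must track the action of $g$ and $g^{-1}$ on the idempotents correctly, and it is essential that $g$ is invertible in $S$ so that the two right-multiplication maps are genuinely inverse to each other. Everything else is a formal consequence of standard Morita theory together with the classification of indecomposable projectives recorded after the previous lemma.
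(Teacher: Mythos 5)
Your proof is correct, and the computation at its heart checks out: with $e_j = {}^g e_i$, the elements $u = e_j g$ and $v = e_i g^{-1}$ do satisfy $vu = e_i$ and $uv = e_j$ under the twisted multiplication, so $\Lambda S e_i \cong \Lambda S e_j$ whenever $e_i$ and $e_j$ lie in the same $S$-orbit. This is exactly the key fact the paper relies on, but the two arguments package it differently. The paper's proof is a two-line sketch via Gabriel's construction: it forms the associated $k$-linear category $\mathcal{A}$ of $\Lambda S$, asserts that idempotents in the same $S$-orbit become isomorphic objects of $\mathcal{A}$, and passes to a skeleton to land on $\epsilon \Lambda S \epsilon$. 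You instead verify the object isomorphism explicitly by exhibiting $u$ and $v$, and then invoke the full-idempotent criterion ($\Lambda S\,\epsilon\,\Lambda S = \Lambda S$, equivalently $\Lambda S \epsilon$ is a progenerator with $\End_{\Lambda S}(\Lambda S\epsilon)^{\mathrm{op}} \cong \epsilon\Lambda S\epsilon$). Your route is more elementary and self-contained --- it supplies precisely the verification the paper leaves implicit --- and it makes visible that the $p$-group hypothesis is irrelevant here, consistent with the paper's remark that the lemma holds for arbitrary groups. One small simplification available to you: you do not actually need the previous lemma's classification of indecomposable projectives (hence not the primitivity of the $e_i$ in $\Lambda S$); the relations $e_i = v_i u_i$ with $u_i \in \epsilon\,\Lambda S$ and $v_i \in \Lambda S\,\epsilon$ already give $1 = \sum_i e_i \in \Lambda S\,\epsilon\,\Lambda S$, which is fullness directly.
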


\begin{proof}
Consider the associated $k$-linear category $\mathcal{A}$ of $\Lambda S$ defined by Gabriel's construction (\cite{Bautista}). Note that two idempotents in the same $S$-orbit are isomorphic when viewed as objects in $\mathcal{A}$. Therefore, $\Lambda S$ is Morita equivalent to the skeletal category of $\mathcal{A}$, and this skeletal category is Morita equivalent to $\epsilon \Lambda S \epsilon$ by Gabriel's construction again.
\end{proof}

Although we assumed in this lemma that $S$ is a $p$-group, the conclusion holds for arbitrary groups.

The following proposition motivates us to study the situation that $S$ acts freely on $E$. That is, for every $1 \leqslant i \leqslant n$ and $g \in S$, $g(e_i) \neq e_i$.

\begin{proposition}
Let $S, \Lambda$ and $E$ be as above.
\begin{enumerate}
\item If $\gldim \Lambda S < \infty$, then the action of $S$ on $E$ is free.
\item Suppose that $p \neq 2, 3$ and $\Lambda$ is not a local algebra. If $\Lambda S$ is of finite representation type, then $S$ acts freely on $E$.
\end{enumerate}
\end{proposition}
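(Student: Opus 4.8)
The plan is to prove both parts by contraposition. Assuming the action of $S$ on $E$ is not free, I pick an idempotent $e_i\in E$ with nontrivial stabilizer in $S$, choose an order-$p$ element $\sigma$ in that stabilizer, and set $H=\langle\sigma\rangle\cong C_p$, so that $\sigma(e_i)=e_i$. Since $H\leqslant S$, Corollary 2.2 tells us that $\gldim\Lambda S<\infty$ forces $\gldim\Lambda H<\infty$ (part (2) of that corollary) and that $\Lambda S$ of finite representation type forces $\Lambda H$ of finite representation type (part (1)). Hence it suffices to contradict the corresponding finiteness for $\Lambda H$. Throughout I use that, because $e_i$ is $\sigma$-fixed, Lemma 3.1 applied to $H$ shows $e_i$ is primitive in $\Lambda H$ and $\overline{e}_i(\overline{\Lambda} H)\overline{e}_i\cong kH$; as the orbit $\{e_i\}$ is a singleton this corner is a whole block of $\overline{\Lambda} H$, so the simple $\Lambda H$-module $T_i$ supported at $i$ is the trivial $kH$-module, i.e. one-dimensional with $\mathfrak{r}$ acting as $0$ and $H$ acting trivially.

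For (1) the key observation is that $1_\Lambda\otimes kH$ is a subalgebra of $\Lambda H$ over which $\Lambda H$ is free as a left module (a free basis is $\{\lambda\otimes 1_H\}$ with $\lambda$ running over a $k$-basis of $\Lambda$). Hence the restriction functor $\Lambda H\text{-mod}\to kH\text{-mod}$ is exact and carries projectives to projectives, giving $\pd_{kH}(M\!\downarrow)\leqslant\pd_{\Lambda H}(M)$ for every $M$. Restricting $T_i$ to $kH$ yields the trivial module $k$, whose projective dimension over $kH$ is infinite because $H$ is a nontrivial $p$-group and $kH$ is therefore local but not semisimple. Thus $\pd_{\Lambda H}T_i=\infty$, so $\gldim\Lambda H=\infty$, and the action must be free. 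The equalities $\gldim\Lambda G=\gldim\Lambda=\gldim\Lambda^S$ in the free case then follow from the Morita equivalences recorded before the proposition.

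For (2) I reduce as above to showing that $\Lambda H$, with $H\cong C_p$ and $\sigma(e_i)=e_i$, has infinite representation type whenever $\Lambda$ is connected and not local; note $p\geqslant 5$ in this situation. Since $\Lambda$ is connected with at least two vertices, $i$ is incident to a non-loop arrow $\alpha$ joining it to some $j\neq i$. If $\sigma(e_j)\neq e_j$, the $\sigma$-orbit of $j$ has size $p$, so $\alpha$ has $p$ distinct $\sigma$-translates all incident to the fixed vertex $i$; killing all other vertices, arrows, and $\mathfrak{r}^2$ produces a $\sigma$-stable quotient of $\Lambda$ whose skew group algebra is $(kK_{1,p})H$, where $K_{1,p}$ is the star with $p$ leaves. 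For $p\geqslant 5$ the hereditary algebra $kK_{1,p}$ has infinite representation type (its underlying graph is not a Dynkin diagram), and Corollary 2.2(1) applied to the trivial subgroup upgrades this to infinite type for $(kK_{1,p})H$, which is a quotient of $\Lambda H$. If instead $\sigma(e_j)=e_j$, then $\sigma$ fixes the one-dimensional arrow space $e_j\mathfrak{r}e_i$, hence acts trivially on the $A_2$-quotient $\Lambda\twoheadrightarrow kA_2$ cut out by $e_i,e_j,\alpha$, and $\Lambda H$ surjects onto $kA_2\otimes_k k[x]/(x^p)$.

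The main obstacle is this last building block: one must show that $kA_2\otimes_k k[x]/(x^p)$ — equivalently the triangular algebra $T_2(k[x]/(x^p))$ of $k[x]/(x^p)$-linear maps $V\to W$ — is of infinite representation type for $p\geqslant 5$. I would establish this either by exhibiting an explicit one-parameter family of pairwise non-isomorphic indecomposables (built from two Jordan blocks of size close to $p$ linked by the connecting map) or by invoking the known classification of representation types of such triangular/tensor algebras. This is precisely where the hypothesis $p\neq 2,3$ enters: for $n=\dim_k k[x]/(x^n)\leqslant 3$ both this tensor algebra and the relevant stars $K_{1,2}=A_3$ and $K_{1,3}=D_4$ remain of finite type, so for $p\in\{2,3\}$ a non-free action can still yield finite representation type and the statement genuinely needs $p\geqslant 5$. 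Verifying infinite type exactly at the threshold $n=p\geqslant 5$, rather than the surrounding reductions, is the crux of the argument.
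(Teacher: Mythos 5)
Your part (1) is correct and follows the paper's reduction (pass to $H=\langle g\rangle$ via Corollary 2.2, then exploit that $kH$ is local but not semisimple); your finishing move --- restricting along the free extension $kH\subseteq \Lambda H$ to force $\pd_{kH}k\leqslant\pd_{\Lambda H}T_i$ --- is a clean substitute for the paper's explicit periodic syzygy computation of $\Lambda e_1\otimes_k k$, and it works. For part (2) you take a genuinely different route: the paper passes to the corner algebra $\Gamma=(e_i+e_j)\Lambda H(e_i+e_j)$, shows it is basic with two non-isomorphic indecomposable projectives and that $\dim_k e_i\Lambda He_i\geqslant p$ and one of $\dim_k e_i\Lambda He_j$, $\dim_k e_j\Lambda He_i$ is $\geqslant p$, invokes the Bautista--Gabriel--Roiter--Salmer\'on/Bongartz classification of representation-finite algebras with two simples, and then explicitly rules out the unique surviving candidate $R$ by showing $e_i(g-1_H)\notin e_i\Lambda He_j\Lambda He_i$. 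You instead reduce to one of two concrete test algebras, $kK_{1,p}$ or $kA_2\otimes_k k[x]/(x^p)$.

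The first branch of your dichotomy is fine (the star $K_{1,p}$ is not Dynkin for $p\geqslant 5$, and Corollary 2.2 transfers infinite type back up), modulo routine care that the ideals you kill are $\sigma$-stable. But the second branch has a genuine gap, which you yourself flag: the infinite representation type of $kA_2\otimes_k k[x]/(x^p)\cong T_2(k[x]/(x^p))$ for $p\geqslant 5$ is the entire content of the hard case, and you only promise to prove it ``either by exhibiting an explicit one-parameter family \dots or by invoking the known classification'' without doing either. As it stands the argument is incomplete. The claim is true, and the shortest repair is exactly the paper's tool: apply the two-simples classification to $T_2(kC_p)$, whose two diagonal corners and one off-diagonal corner all have dimension $p\geqslant 5$ while the other off-diagonal corner vanishes, so it cannot be the two-cycle algebra $R$ and hence is representation-infinite. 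Two smaller unjustified steps in the same branch: $e_j(\mathfrak{r}/\mathfrak{r}^2)e_i$ need not be one-dimensional, so to obtain a $\sigma$-equivariant surjection onto $kA_2$ with trivial action you must pass to a one-dimensional $\sigma$-stable quotient of that space (this exists because $\sigma$ acts unipotently in characteristic $p$, e.g.\ take any hyperplane containing $(\sigma-1)\cdot e_j(\mathfrak{r}/\mathfrak{r}^2)e_i$); and ``adjacent via an arrow'' should be phrased intrinsically via $\mathfrak{r}/\mathfrak{r}^2$ since $\sigma$ need not preserve a chosen arrow basis.
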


\begin{proof}
We use contradiction to prove the first part. Without loss of generality we assume that there is some $1_S \neq g \in S$ such that $g(e_1) = e_1$. Consider the subgroup $H = \langle g \rangle \leqslant S$. By the assumption, $H$ is a nontrivial $p$-group. By Corollary 2.2, we only need to show that $\gldim \Lambda H = \infty$.

It is obvious that $\Lambda He_1$ is an indecomposable projective $\Lambda H$-module. Moreover, since $H = \langle g \rangle$ and $g$ fixes $e_1$, we actually have $\Lambda He_1 = \Lambda e_1 H = \Lambda e_1 \otimes_k kH$ as $\Lambda H$-modules, where $h \in H$ acts diagonally on $\Lambda e_1 \otimes_k kH$. Consider the $\Lambda H$-module $\Lambda e_1 \cong \Lambda e_1 \otimes_k k$, where $h \in H$ acts on $k$ trivially and sends $\lambda e_1 \in \Lambda e_1$ to $h (\lambda) e_1$. It has a projective cover $\Lambda e_1 \otimes_k kH$ and the first syzygy is isomorphic to $\Lambda e_1 \otimes_k \mathfrak{r}$, where $\mathfrak{r}$ is the radical of $kH$. Clearly, the first syzygy is not projective and its projective cover is isomorphic to $\Lambda e_1 \otimes_k kH$. Observe that $\Lambda e_1 \otimes_k k$ appears again as the second syzygy. Consequently, $\Lambda e_1 \otimes_k k$ has infinite projective dimension, so $\gldim \Lambda H = \infty$. This contradiction establishes the conclusion.

Now we prove the second part by contradiction. Since it is obviously true if $S$ is a trivial group (for example, if $p = 0$), without loss of generality we assume $S \neq 1$, so $p \geqslant 5$. Suppose that there are a primitive idempotent $e_i$ and some $1_S \neq g \in S$ such that $g (e_i) = e_i$. We want to show that $\Lambda S$ is of infinite representation type. Let $H$ be the cyclic group generated by $g$ and consider $\Lambda H$. By Corollary 2.2, it suffices to prove that $\Lambda H$ is of infinite representation type.

Since $\Lambda$ is a connected algebra and is not local, we can find another primitive idempotent $e_j$ connected to $e_i$, i.e., either $e_i \Lambda e_j \neq 0$ or $e_j \Lambda e_i \neq 0$. Consider the algebra $\Gamma = (e_i + e_j) \Lambda H (e_i + e_j)$. By Proposition 2.5 in page 35 of \cite{Auslander}, $\Gamma$-mod is equivalent to a subcategory of $\Lambda H$-mod. Therefore, we only need to show the infinite representation type of $\Gamma$.

Note that $\Lambda H e_i$ and $\Lambda H e_j$ are not isomorphic as $\Lambda H$-modules. Indeed, if they are isomorphic as $\Lambda H$-modules, then viewed as $\Lambda$-modules they must be isomorphic as well. But $_\Lambda \Lambda H e_i \cong (\Lambda e_i) ^{|H|}$ since $H$ fixes $e_i$. On the other hand, $_{\Lambda} \Lambda H e_j \cong \bigoplus _{h \in H} \Lambda h(e_j)$. For every $h \in H$, $h(e_j) \neq e_i$ since otherwise we should have $h^{-1} (e_i) = e_j$. This is impossible since $H$ fixes $e_i$. Therefore, $_\Lambda \Lambda H e_i$ is not isomorphic to $_\Lambda \Lambda H e_j$ and hence $\Lambda H e_i \ncong \Lambda H e_j$. Correspondingly, $\Gamma$ is a basic algebra with two indecomposable projective summands.

Now let us study the structure of $\Gamma$. Firstly, $\dim_k e_i \Lambda H e_i \geqslant |H| \geqslant p \geqslant 5$ since it contains linearly independent vectors $e_i h = h e_i$ for every $h \in H$. Furthermore,
\begin{equation*}
e_i \Lambda H e_j = \bigoplus _{h \in H} e_i \Lambda h(e_j) \otimes_k h, \quad e_j \Lambda H e_i = \bigoplus _{h \in H} e_j \Lambda e_i \otimes_k h.
\end{equation*}
Since $h$ fixes $e_i$ and sends $e_j$ to $h(e_j)$, and $e_j$ is connected to $e_i$, we deduce that either $\dim_k e_i \Lambda h(e_j) = \dim_k e_i \Lambda e_j \geqslant 1$ or $\dim_k e_j \Lambda e_i \geqslant 1$. Therefore, either $\dim_k e_i \Lambda H e_j \geqslant |H| \geqslant p \geqslant 5$ or $\dim_k e_j \Lambda H e_i \geqslant |H| \geqslant p \geqslant 5$.

In \cite{Bautista,Bongartz} the representation types of algebras with two nonisomorphic indecomposable projective modules have been classified. By the list described in these papers, we conclude that $\Gamma$ is of finite representation type only if it is isomorphic to the following algebra $R$ with relations $(\alpha \beta)^t = (\beta \alpha)^t = 0$ for some $t \geqslant p \geqslant 5$:
\begin{equation*}
\xymatrix{ e_i \ar@/^/ [rr]^{\alpha} & & e_j \ar@ /^/ [ll]^{\beta}.}
\end{equation*}
We claim that this is impossible. Indeed, if $\Gamma \cong R$, then every non-invertible element in $e_i \Gamma e_i$ is contained in $e_i \Gamma e_j \Gamma e_i = e_i \Lambda H e_j \Lambda H e_i$ since $R$ has this property. In particular, $e_i(g - 1_H) \in e_i \Gamma e_i$, which is not invertible, is contained in $e_i \Lambda H e_j \Lambda H e_i$.

We have:
\begin{align*}
e_i \Lambda H e_j \Lambda H e_i & = e_i (\bigoplus _{h \in H} \Lambda h(e_j) h) (\Lambda H e_i) \subseteq \bigoplus _{h \in H} e_i \Lambda h(e_j) h (\Lambda H e_i)\\
& \subseteq \bigoplus _{h \in H} e_i \Lambda h(e_j)  (\Lambda H e_i) = \bigoplus _{h \in H} (e_i \Lambda h(e_j) \Lambda e_i) H
\end{align*}
since $H$ fixes $e_i$. Note that $h (e_j) \neq e_i$ for every $h \in H$, so $e_i \Lambda h(e_j) \Lambda e_i \subseteq \rad(e_i \Lambda e_i)$. Thus $e_i (g - 1_H)$ is contained in $e_i \Lambda H e_j \Lambda H e_i \subseteq \rad (e_i \Lambda e_i) H$, and we can write
\begin{equation*}
e_i (g - 1_H) = \sum_{h \in H} \lambda_h h = \lambda_{1_H} 1_H + \sum_{1_H \neq h \in H} \lambda_h h
\end{equation*}
where $\lambda_h \in \rad(e_i \Lambda e_i)$. Therefore,
\begin{equation*}
(-e_i + \lambda_{1_H}) 1_H = \sum_{1_H \neq h \in H} \lambda_h h - e_i g.
\end{equation*}
This happens if and only if $e_i - \lambda_{1_H} = 0$. But this is impossible since $e_i$ is the identity element of $e_i \Lambda e_i$ and $\lambda_{1_H} \in \rad(e_i \Lambda e_i)$. Therefore, as we claimed, $\Gamma$ cannot be isomorphic to the path algebra $R$ of the above quiver, so it must be of infinite representation type.
\end{proof}

Although we have assumed the condition that $\Lambda$ has a complete set of primitive orthogonal idempotents which is closed under the action of $S$, it can be replaced by a slightly weaker condition that $\Lambda$ can be decomposed into a set of indecomposable summands which is closed under the action of $S$. With some small modifications, we can show that the above proposition is still true under this weaker condition.

In next section we will use the second statement of this proposition to classify the representation types of skew groups algebras $\Lambda G$ for $p \neq 2, 3$ when $\Lambda$ is the incidence algebra of a finite connected poset $\mathcal{P}$ and elements of $G$ act as automorphisms on $\mathcal{P}$.

We reminder the reader that the condition that $p \neq 2, 3$ is required, as shown by the following example.

\begin{example}
Let $\Lambda$ be the path algebra of the quiver $x \rightarrow y$. Let $S = \langle g \rangle$ be a cyclic group of order $p = 2$. Suppose that the action of $S$ on $\Lambda$ is trivial. That is, $g(x) = x$ and $g(y) = y$. Then $\Lambda S$ is isomorphic to the path algebra of the following quiver with relations $\alpha \delta = \theta \alpha$, $\theta^2 = \delta^2 = 0$. By Bongartz's list in \cite{Bautista}, this algebra is of finite representation type. However, $\gldim \Lambda S = \infty$ although $\gldim \Lambda = 1$.
\begin{equation*}
\xymatrix{ \mathcal{E}: & x \ar@(ld,lu)[]|{\delta} \ar[rr] ^{\alpha} & & y \ar@(rd,ru)[]|{\theta}}.
\end{equation*}

Now let $\Lambda$ be the path algebra of the following quiver such that the composite of every two arrows is 0. Let $S = \langle g \rangle$ be a cyclic group of order $p=5$. The action of $g$ on $\Lambda$ is determined by $g(i) = (i + 1)\mod 5$. Then we find that all idempotents are isomorphic in $\Lambda S$. Consequently, $\Lambda S$ is isomorphic to the algebra of $5 \times 5$ matrices over $k[X] / (X^2)$. It is Morita equivalent to $k[X] / (X^2)$, which is of finite representation type. It is also clear that both $\Lambda$ and $\Lambda S$ have infinite global dimension. Moreover, $\Lambda ^S$ is spanned by $1$ and $\alpha + \beta + \gamma + \delta + \theta$, and $\Lambda^S \cong k[X] / (X^2)$.
\begin{equation*}
\xymatrix{ & 1 \ar[dr]^{\alpha} & \\ 5 \ar[ur]^{\theta} & & 2 \ar[d]^{\beta} \\ 4 \ar[u]^{\delta} & & 3 \ar[ll]^{\gamma}}
\end{equation*}
\end{example}

In the rest of this section we consider in details the situation that $S$ acts freely on $E$. Then $|S|$ divides $n$, so $E$ has $s = n / |S|$ $S$-orbits. We will show that this situation generalize many results in the case that a finite group of invertible order acts on $\Lambda$ (note in this special case $S = 1$ always acts freely on $E$). In particular, the radical of $\Lambda S$ is $\mathfrak{r} S$, and a $\Lambda S$-module is projective if and only if viewed as a $\Lambda$-module it is projective.

Let $M$ be a $\Lambda S$-module, the elements $v \in M$ satisfying $g (v) = v$ for every $g \in S$ form a $\Lambda ^S$-submodule, which is denoted by $M^S$. Let $F^S$ be the functor from $\Lambda S$-mod, the category of finitely generated $\Lambda S$-modules, to $\Lambda ^S$-mod, sending $M$ to $M^S$. This is indeed a functor since for every $\Lambda S$-homomorphism $f: M \rightarrow N$, $M^S$ is mapped into $N^S$ by $f$.

Statements in the following proposition are essentially known to people under the assumption that there is some $\epsilon \in \Lambda$ such that $\Sigma _{g \in S} g(\epsilon) = 1$, see \cite{ARS, Auslander}.

\begin{proposition}
Let $S, \Lambda, E$ and $F^S$ be as before and suppose that $S$ acts freely on $E$. We have:
\begin{enumerate}
\item The $i$-th radical of $\Lambda S$ is $\mathfrak{r} ^i S$, where $\mathfrak{r}$ is the radical of $\Lambda$, $i \geqslant 1$.
\item The functor $F^S$ is exact.
\item The regular representation $_{\Lambda S} \Lambda S \cong \Lambda ^{|S|}$, where $\Lambda$ is the trivial $\Lambda S$-module.
\item The basic algebra of $\Lambda S$ is isomorphic to $\End _{\Lambda S} (\Lambda) \cong \Lambda ^S$, the space constituted of all elements in $\Lambda$ fixed by $S$.
\item A $\Lambda S$-module $M$ is projective (resp., injective) if and only if the $\Lambda$-module $_{\Lambda} M$ is projective (resp., injective). In particular, $\Lambda S$ is self-injective if and only if so is $\Lambda$.
\end{enumerate}
\end{proposition}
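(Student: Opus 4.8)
The plan is to prove the five statements in the order (1), (3), (4), (2), (5), since the homological assertions rest on the two structural facts (1) and (3). For (1) I would upgrade the observation already made in the first lemma of this section: there $\mathfrak{r}S = \mathfrak{r}\otimes_k kS$ was shown to be a nilpotent two-sided ideal contained in $\rad \Lambda S$, with $\Lambda S/\mathfrak{r}S \cong \overline{\Lambda}S$. Because $S$ acts \emph{freely} on $E$ and $\overline{\Lambda}\cong\bigoplus_{i=1}^n k\overline{e}_i$, I would compute $\overline{e}_i(\overline{\Lambda}S)\overline{e}_j = \bigoplus_{g\in S} k\,\overline{e}_i\,\overline{g(e_j)}\,g$ and note that this is one-dimensional exactly when $e_i,e_j$ lie in a common $S$-orbit and zero otherwise; gathering the $|S|$ idempotents of a single orbit exhibits the corresponding block as a full matrix algebra $M_{|S|}(k)$. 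Hence $\overline{\Lambda}S$ is semisimple, so $\rad\Lambda S = \mathfrak{r}S$, and the higher powers follow from $(\mathfrak{r}S)^i = \mathfrak{r}^i S$, which holds because every $g\in S$ preserves $\mathfrak{r}$.

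The heart of the argument is (3). Using freeness, the idempotent $\epsilon = \sum_{i=1}^s e_i$ (a transversal of the $S$-orbits) satisfies $\sum_{g\in S} g(\epsilon) = 1$, and the $g(\epsilon)$ form a complete set of orthogonal idempotents permuted freely and transitively by $S$ (conjugation by $1\cdot g$). This gives $\Lambda S = \bigoplus_{g\in S}\Lambda S\,g(\epsilon)$ with all summands isomorphic, since conjugate idempotents yield isomorphic projectives. I would then identify $\Lambda S\epsilon$ with the trivial module $\Lambda$ by restricting the $\Lambda S$-linear surjection $\mu\colon \Lambda S\to\Lambda$, $\lambda g\mapsto\lambda$, to $\Lambda S\epsilon$ and checking it is onto with $\dim_k \Lambda S\epsilon = |S|\dim_k\Lambda\epsilon = \dim_k\Lambda$, forcing an isomorphism. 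This proves $\Lambda S\cong\Lambda^{|S|}$ with $\Lambda$ the trivial module; in particular $\Lambda$ is a projective $\Lambda S$-module. The element $\epsilon$ with $\sum_g g(\epsilon)=1$ is precisely the device replacing the averaging idempotent $\frac{1}{|S|}\sum_g g$ that is unavailable in the modular case, and producing it is exactly where freeness is indispensable.

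Parts (4) and (2) then follow formally. For (4): the indecomposable projectives of $\Lambda S$ are the $\Lambda S e_i$, with those in one orbit isomorphic, so the basic algebra is $\End_{\Lambda S}(\Lambda S\epsilon)\cong\End_{\Lambda S}(\Lambda)$ by (3); a direct check shows any $\Lambda S$-endomorphism of the trivial module $\Lambda$ is right multiplication by an element of $\Lambda^S$, giving $\End_{\Lambda S}(\Lambda)\cong\Lambda^S$. For (2): one verifies the natural isomorphism $M^S\cong\Hom_{\Lambda S}(\Lambda,M)$ (trivial module $\Lambda$), whence $F^S\cong\Hom_{\Lambda S}(\Lambda,-)$; since $\Lambda$ is projective by (3), this functor, and therefore $F^S$, is exact.

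Finally (5). The implications ``projective (resp.\ injective) over $\Lambda S$ $\Rightarrow$ over $\Lambda$'' are immediate: restriction preserves projectives because $_\Lambda\Lambda S\cong\Lambda^{|S|}$ is free by (3), and the injective half is Corollary 2.2(3) with $H=1$. The converses are the main obstacle, because $\Lambda S$ is \emph{not} separable over $\Lambda$ (the idempotent $\epsilon$ is not central, so the naive Higman/relative-trace splitting of $\Lambda S\otimes_\Lambda M\to M$ fails). Instead I would argue with projective covers: if $_\Lambda M$ is projective, take a projective cover $P\twoheadrightarrow M$ over $\Lambda S$; its kernel lies in $(\rad\Lambda S)P = \mathfrak{r}SP = \mathfrak{r}P$, which by (1) equals the $\Lambda$-radical $\rad_\Lambda(P\downarrow)$. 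Hence $P\downarrow\twoheadrightarrow M\downarrow$ is again essential, and $P\downarrow$ is $\Lambda$-projective by (3); so it is the projective cover of the already projective module $M\downarrow$, forcing $P\downarrow\cong M\downarrow$ and thus $M\cong P$ projective. The injective converse is dual, using that the socles $\operatorname{soc}_{\Lambda S}M = \operatorname{soc}_\Lambda M$ coincide (dual to (1), since $g$ permutes $\operatorname{soc}_\Lambda M$) to make an injective envelope over $\Lambda S$ restrict to one over $\Lambda$. The self-injectivity statement then follows by applying (5) to $M = {}_{\Lambda S}\Lambda S$ together with $_\Lambda\Lambda S\cong\Lambda^{|S|}$.
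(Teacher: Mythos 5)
Your proof is correct, and it reaches the conclusion by a genuinely different route in parts (1), (3) and (5). In (1) you show $\Lambda S/\mathfrak{r}S\cong\overline{\Lambda}S$ is semisimple by exhibiting each orbit block as a full matrix algebra $M_{|S|}(k)$ via the spaces $\overline{e}_i(\overline{\Lambda}S)\overline{e}_j$, whereas the paper gets the same conclusion from a dimension count ($\dim_k\Lambda S/\mathfrak{R}\geqslant n|S|=\dim_k\Lambda S/\mathfrak{r}S$ because each simple has dimension at least $|S|$); your version is more explicit and also yields the number $s$ of simple modules needed in (4). In (3) the paper invokes a cited result from \cite{Auslander} to see that the trivial module is projective and then counts indecomposable summands of $_{\Lambda S}\Lambda$ to identify it with $\Lambda S\epsilon$; your argument --- restrict $\lambda g\mapsto\lambda$ to $\Lambda S\epsilon$, check surjectivity, and match dimensions using $\sum_{g\in S}g(\epsilon)=1$ --- is shorter and self-contained. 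The main divergence is (5): the paper deduces the hard direction from the exactness of $F^S\cong\Hom_{\Lambda S}(\Lambda,-)$, writing $\Hom_{\Lambda S}(M,-)=F^S\circ\Hom_{\Lambda}(M,-)$, while you use projective covers together with the identity $\rad(\Lambda S)P=\mathfrak{r}P=\rad({}_{\Lambda}P)$ supplied by (1), and dually socles and injective envelopes. Both are valid; the paper's functorial argument is quicker once (2) is available, while yours isolates precisely where the radical computation and the freeness of the action enter, and your closing remark that $\epsilon$ with $\sum_{g\in S}g(\epsilon)=1$ replaces the unavailable averaging idempotent is exactly the right heuristic.
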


\begin{proof}
(1), Since $\mathfrak{r}$ is closed under the action of $S$, $\mathfrak{r} S = \mathfrak{r} \otimes_k kS$ is a two-sided ideal of $\Lambda S$. Moreover, it is nilpotent, and hence is contained in $\mathfrak{R}$, the radical of $\Lambda S$. On the other hand, $\Lambda S e_i \cong \Lambda S g(e_i)$ for $g \in S$, so the dimension of every simple $\Lambda S$-module is at least $|S|$. Furthermore, $\Lambda S / \mathfrak{R}$ has $n$ simple summands. Thus $\dim_k \Lambda S / \mathfrak{R} \geqslant n |S|$. But $\dim_k \Lambda S / \mathfrak{r} S = \dim_k (\Lambda / \mathfrak{r}) \times |S| = n |S|$. This forces $\mathfrak{R} = \mathfrak{r} S$, $\mathfrak{R} ^2 = \mathfrak {r} S \mathfrak{r} S = \mathfrak{r}^2 S$, and so on.

(2), Observe that $F$ is canonically isomorphic to $\Hom _{\Lambda S} (\Lambda, -)$ and its exactness follows from the fact that the trivial $\Lambda S$-module $\Lambda$ is projective. For details, see \cite{Auslander}.

(3), As $\Lambda S$-modules, $\Lambda S = \bigoplus _{g \in S} \Lambda S g(\epsilon) \cong (\Lambda S \epsilon) ^{|S|}$ since $\epsilon$ and $g (\epsilon)$ are isomorphic idempotents for every $g \in S$. Thus $\Lambda S$ has precisely $s = n / |S|$ non-isomorphic indecomposable projective modules $\Lambda S e_i$, $1 \leqslant i \leqslant s$. Since $\sum _{g \in S} g(\epsilon) = 1$, by Proposition 4.1 in page 87 of \cite{Auslander}, the trivial $\Lambda S$-module $_{\Lambda S} \Lambda$ is projective. It has at most $s$ indecomposable summands since for two distinct indecomposable summands $P$ and $Q$, $_{\Lambda} P$ and $_{\Lambda} Q$ are not isomorphic. Consequently, $_{\Lambda S} \Lambda$ can have at most $s$ indecomposable summands. On the other hand, it is clear that $\Lambda \sum_{g \in S} g(e_i)$ is a submodule of $_{\Lambda S} \Lambda$, and $_{\Lambda S} \Lambda = \bigoplus _{i = 1}^s (\Lambda \sum_{g \in S} g(e_i))$. Therefore, $_{\Lambda S} \Lambda$ has at least $s$ indecomposable summands. This forces $_{\Lambda S} \Lambda$ to have precisely $s$ indecomposable summands, and all of them are pairwise non-isomorphic. Consequently, $_{\Lambda S} \Lambda \cong \Lambda S \epsilon$, and
\begin{equation*}
_{\Lambda S} \Lambda S \cong (\Lambda S \epsilon) ^{|S|} \cong (_{\Lambda S} \Lambda) ^{|S|}.
\end{equation*}

(4) It follows from (3) immediately.

(5) The $\Lambda S$-module $M$ is projective if and only if the functor $\Hom _{\Lambda S} (M, -)$ is exact. But $\Hom _{\Lambda S} (M, -) = \Hom _{\Lambda} (M, -)^S$ is a composite of $\Hom_{\Lambda} (M, -)$ and the exact functor $F^S$. Therefore, $\Hom _{\Lambda S} (M, -)$ is exact if and only if $\Hom _{\Lambda} (M, -)$ is exact, which is equivalent to saying that the restricted module $_{\Lambda} M$ is projective. Using the same technique we can show that $M$ is injective if and only if so is $_{\Lambda} M$.

Suppose that $\Lambda$ is self-injective and let $P$ be a projective $\Lambda S$-module. Then $_{\Lambda} P$ is projective, so is injective. Thus $P$ is also injective, and hence $\Lambda S$ is self-injective. Conversely, suppose that $\Lambda S$ is self-injective. Take an arbitrary projective $\Lambda$-module $Q$ and consider $Q \uparrow _1^S = \Lambda S \otimes _{\Lambda} Q$. It is projective, so is injective as well. Therefore, $Q \uparrow _1^S \downarrow _1^S$ is also injective. As a summand of $Q \uparrow _1^S \downarrow _1^S$, $Q$ is also injective. Therefore, $\Lambda$ is self-injective.
\end{proof}

Note that if the action of $S$ on $E$ is free, then $\Lambda S$ is actually a matrix algebra over $\Lambda ^S$. In particular, $\Lambda S$ is a left (resp. right) free $\Lambda^S$-module. But $_{\Lambda} \Lambda S \cong (_{\Lambda} \Lambda)^{|S|}$ (resp., $\Lambda S _{\Lambda} \cong (\Lambda _{\Lambda})^{|S|}$) as left (resp., right) $\Lambda$-modules, so $_{\Lambda} \Lambda$ (resp., $\Lambda _{\Lambda}$) is also a left (resp. right) free $\Lambda^S$-module. But it might not be a free bimodule, as shown by the following example.

\begin{example} \normalfont
Let $\Lambda$ be the path algebra of the following quiver over a field $k$ with characteristic 2 satisfying the relations $\delta \beta = \beta \delta = 0$, and let $S = \langle x \rangle$ be a cyclic group of order 2, whose action on $\Lambda$ is determined by $\sigma_x (1_x) = 1_y$ and $\sigma_x (\delta) = \beta$. Then the skew group algebra $\Lambda ^{\sigma} S$ is Morita equivalent to $\Lambda^S$ spanned by $1= 1_x + 1_y$ and $\delta + \beta$, and $\Lambda^S \cong k[X] / (X^2)$.
\begin{equation*}
\xymatrix{ x \ar@/^/[r] ^{\beta} & y \ar@/^/[l] ^{\delta}}
\end{equation*}

It is easy to check that $\Lambda$ is both a left free $\Lambda^S$-module and a right free $\Lambda^S$-module. Suppose that it is a free bimodule. Then $_{\Lambda ^S} \Lambda _{\Lambda ^S} \cong \Lambda^S \oplus B$ where $B$ is a free bimodule of rank 1. Since $1_x$ is not in $\Lambda^S$, we can find a unique $u \in B$ with $1_x = a + b(\beta + \delta) + u$, where $a, b \in k$. Therefore, $u = 1_x - a - b(\beta + \delta) \in B$, so $(\beta + \delta) u = (1-a) \delta - a \beta \in B$ since $B$ is a left $\Lambda^S$-module. Similarly, $u (\beta + \delta) = (1-a) \beta - a \delta \in B$ since $B$ is a right $\Lambda^S$-module. Adding these two terms we have $\beta + \delta \in B$, which is impossible. This contradiction tells us that $\Lambda$ is not a free bimodule.
\end{example}

If $\Lambda = \Lambda^S \oplus B$ as $\Lambda^S$-bimodule, we have a split bimodule homomorphism $\zeta: \Lambda \to \Lambda^S$. For $M \in \Lambda^S$-mod, we define two linear maps:
\begin{align*}
\psi: M \rightarrow (\Lambda \otimes _{\Lambda ^S} M) \downarrow _{\Lambda ^S} ^{\Lambda}, \quad v \mapsto 1 \otimes v, \quad v \in M;\\
\varphi: (\Lambda \otimes _{\Lambda ^S} M) \downarrow _{\Lambda ^S} ^{\Lambda} \rightarrow M, \quad \lambda \otimes v \mapsto \zeta(\lambda) v, \quad v \in M, \lambda \in \Lambda.
\end{align*}
These two maps are well defined $\Lambda^S$-module homomorphisms (to check it, we need the assumption that $A^S$ is a summand of $\Lambda$ as $\Lambda^S$-bimodules). We also observe that when $\lambda \in \Lambda ^S$,
\begin{equation*}
\varphi (\lambda \otimes v) = \zeta (\lambda) v = \lambda v.
\end{equation*}
Therefore, $\varphi \circ \psi$ is the identity map. Particularly, $M$ is isomorphic to a summand of $M \uparrow _{\Lambda^S} ^{\Lambda} \downarrow _{\Lambda^S} ^{\Lambda}$.

Now we are ready to prove the first main result. Recall an algebra $A$ is called an \textit{Auslander algebra} if $\gldim A \leqslant 2$, and if
\begin{equation*}
\xymatrix{0 \ar[r] & A \ar[r] & I_0 \ar[r] & I_1 \ar[r] & I_2 \ar[r] & 0}
\end{equation*}
is a minimal injective resolution, then $I_0$ and $I_1$ are projective as well.

\begin{theorem}
Let $\Lambda$ be a basic finite-dimensional $k$-algebra and $G$ be a finite group acting on $\Lambda$. Let $S \leqslant G$ be a Sylow $p$-subgroup. Suppose that there is a set of primitive orthogonal idempotents $E = \{ e_i \} _{i = 1}^n$ such that $E$ is closed under the action of $S$ and $\sum _{i=1}^n e_i = 1$. Then:
\begin{enumerate}
\item$\gldim \Lambda G < \infty$ if and only if $\gldim \Lambda < \infty$ and $S$ acts freely on $E$. Moreover, if $\gldim \Lambda G < \infty$, then $\gldim \Lambda G = \gldim \Lambda = \gldim \Lambda^S$.
\item $\Lambda G$ is an Auslander algebra if and only if so is $\Lambda$ and $S$ acts freely on $E$.
\item Suppose that $p \neq 2, 3$ and $\Lambda$ is not a local algebra. Then $\Lambda G$ is of finite representation type if and only if so is $\Lambda^S$ and $S$ acts freely on $E$. If $\Lambda \cong \Lambda^S \oplus B$ as $\Lambda^S$-bimodules, $\Lambda G$ has finite representation type if and only if so does $\Lambda$ and $S$ acts on $E$ freely.
\end{enumerate}
\end{theorem}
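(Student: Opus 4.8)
The overall strategy is to reduce each assertion to the skew group algebra $\Lambda S$ of a Sylow $p$-subgroup, which is legitimate because $|G:S|$ is coprime to $p$, hence invertible in $k$; Corollary 2.2 then identifies the finite representation types of $\Lambda G$ and $\Lambda S$, gives $\gldim \Lambda G = \gldim \Lambda S$, and matches projective and injective modules under restriction. After this reduction the work is to compare $\Lambda S$ with $\Lambda$ and with $\Lambda^S$, and here the freeness of the action (forced by Proposition 3.3 whenever finiteness holds) lets me invoke Proposition 3.5. For (1), if $\gldim \Lambda G = \gldim \Lambda S$ is finite then Proposition 3.3(1) forces $S$ to act freely on $E$, while Corollary 2.2 gives $\gldim \Lambda < \infty$. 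Conversely, assuming a free action I would prove $\gldim \Lambda S = \gldim \Lambda$ by restricting resolutions: since $\rad \Lambda S = \mathfrak{r}S$ by Proposition 3.5(1), any $\Lambda S$-module $N$ satisfies $\rad_{\Lambda S} N = \mathfrak{r} N = \rad_\Lambda({}_\Lambda N)$, so a minimal projective $\Lambda S$-resolution of $M$ restricts (projectives restrict to projectives by Proposition 3.5(5)) to a minimal projective $\Lambda$-resolution of ${}_\Lambda M$, whence $\pd_{\Lambda S} M = \pd_\Lambda({}_\Lambda M)$. Taking suprema gives $\gldim \Lambda S \leq \gldim \Lambda$; for the reverse inequality, $V$ is a summand of $V\uparrow_1^S\downarrow_1^S$ by Proposition 2.1(1), so $\pd_\Lambda V \leq \pd_{\Lambda S}(V\uparrow_1^S) \leq \gldim \Lambda S$. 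Finally $\gldim \Lambda S = \gldim \Lambda^S$ by Proposition 3.5(4) and Morita invariance of global dimension, yielding the stated chain of equalities.

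For (3), Corollary 2.2 reduces the representation type of $\Lambda G$ to that of $\Lambda S$. If $\Lambda G$ has finite representation type then so does $\Lambda S$, and because $p \neq 2,3$ and $\Lambda$ is not local, Proposition 3.3(2) forces the action to be free; then $\Lambda S$ is Morita equivalent to its basic algebra $\Lambda^S$ (Lemma 3.2 and Proposition 3.5(4)), so $\Lambda^S$ has finite representation type, and the converse is immediate from the same equivalence. Under the extra hypothesis $\Lambda \cong \Lambda^S \oplus B$, I would show $\Lambda$ and $\Lambda^S$ have finite representation type simultaneously by a relative-projectivity descent: since $\varphi \circ \psi = \mathrm{id}$, every indecomposable $\Lambda^S$-module $M$ is a summand of $M\uparrow_{\Lambda^S}^{\Lambda}\downarrow_{\Lambda^S}^{\Lambda}$, so finite representation type of $\Lambda$ descends to $\Lambda^S$; conversely, by Proposition 2.1(1) every indecomposable $\Lambda$-module is a summand of $V\uparrow_1^S\downarrow_1^S$, so finite representation type of $\Lambda S$ (equivalently $\Lambda^S$) descends to $\Lambda$. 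In both directions one decomposes the induced module over the larger algebra into finitely many indecomposables, restricts, and applies Krull--Schmidt to bound the indecomposables of the smaller one.

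For (2), I would use that an algebra is an Auslander algebra precisely when $\gldim \leq 2$ and $\operatorname{domdim} \geq 2$, the dominant dimension condition being the existence of an exact sequence $0 \to A \to X^0 \to X^1$ with $X^0, X^1$ projective--injective. The global dimension is controlled by (1), and finiteness again forces the action to be free, so Proposition 3.5 applies on both sides of the equivalence. To transfer the dominant dimension I would push the defining sequence through the exact restriction and induction functors at the two levels $\Lambda \subseteq \Lambda S \subseteq \Lambda G$, using that these functors carry projective--injective modules to projective--injective modules and the identifications $\Lambda\uparrow_1^S \cong \Lambda S$, $(\Lambda S)\uparrow_S^G \cong \Lambda G$, ${}_{\Lambda S}(\Lambda G) \cong (\Lambda S)^{|G:S|}$ and ${}_\Lambda(\Lambda S) \cong \Lambda^{|S|}$ for the regular modules; applying the appropriate functor to the sequence for one algebra produces the sequence for the other.

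I expect the crux to be this last transfer. Restriction preserves both projectives and injectives (Corollary 2.2 and Proposition 3.5(5)), but I must also ensure that \emph{induction} preserves injective modules. At the level $\Lambda \subseteq \Lambda S$ this follows from Proposition 3.5(5) together with ${}_\Lambda(V\uparrow_1^S) \cong V^{|S|}$, while at the level $\Lambda S \subseteq \Lambda G$ it rests on induction being isomorphic to coinduction (the extension is Frobenius, and moreover $|G:S|$ is invertible). Once injectivity is known to be preserved in both directions, the existence formulation of dominant dimension removes any need for minimality bookkeeping on injective coresolutions, and all three parts follow.
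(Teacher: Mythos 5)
Your proposal is correct and, for parts (1) and (3), follows essentially the same route as the paper: reduce to $G=S$ via Corollary 2.2, extract freeness from Proposition 3.3, and then lean on Proposition 3.5 (radical $\mathfrak{r}S$, projectivity detected by restriction, Morita equivalence with $\Lambda^S$) together with the bimodule-splitting descent for the last claim of (3). In (1) you argue via minimality of the restricted resolution (using $\rad_{\Lambda S}N=\mathfrak{r}N$) to get $\pd_{\Lambda S}M=\pd_{\Lambda}({}_{\Lambda}M)$, whereas the paper only tracks syzygies up to projective summands and concludes $\Omega^{l}(M)$ is projective from Proposition 3.5(5); these are interchangeable. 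The genuine divergence is in (2): the paper works directly with the defining minimal injective resolution of $\Lambda G$ and transfers everything by restriction alone, using ${}_{\Lambda}\Lambda G\cong\Lambda^{|G|}$ to see a minimal injective resolution of $\Lambda$ as a summand of the restricted one; you instead recast the Auslander condition as $\gldim\leqslant 2$ plus $\operatorname{domdim}\geqslant 2$ and push the copresentation upward by induction (noting induction preserves injectives because ${}_{\Lambda}(V\uparrow_{1}^{S})$ is a sum of twists of $V$ at the first level and because the extension is Frobenius with invertible index at the second). Your version costs an extra standard characterization but buys something real: the existence formulation of dominant dimension sidesteps the question of whether the restricted resolution stays minimal, and in particular it cleanly justifies why the \emph{full} terms ${}_{\Lambda}I_{0},{}_{\Lambda}I_{1}$ (not just the summands forming the minimal resolution of $\Lambda$) are projective in the direction $\Lambda\Rightarrow\Lambda G$, a point the paper passes over quickly. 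One small correction: ${}_{\Lambda}(V\uparrow_{1}^{S})$ is $\bigoplus_{g\in S}V^{g}$, a sum of $g$-twists of $V$ rather than copies of $V$ itself, but since twisting by an automorphism preserves injectivity this does not affect your argument.
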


\begin{proof}
By Corollary 2.2, we only need to deal with the case that $G$ is a $p$-group, i.e., $G = S$.

(1): The only if part of the first statement follows from the second statement of Corollary 2.2 and Proposition 3.3. Now we prove the if part.

Suppose that $G = S$ acts freely on $E$ and $\gldim \Lambda = l < \infty$. For an arbitrary $\Lambda S$-module $M$, take a projective resolution
\begin{equation*}
\xymatrix{\ldots \ar[r] & P^1 \ar[r] & P^0 \ar[r] & M.}
\end{equation*}
Applying the restriction functor we get a projective resolution
\begin{equation*}
\xymatrix{\ldots \ar[r] & _{\Lambda} P^1 \ar[r] & _{\Lambda} P^0 \ar[r] & _{\Lambda} M.}
\end{equation*}
Therefore, for $i \geqslant 0$, $_{\Lambda} \Omega^i (M)$ is a direct sum of $\Omega^i (_{\Lambda} M)$ and a projective $\Lambda$-module. Since $\gldim \Lambda = l < \infty$, $\Omega^l (_{\Lambda} M) = 0$, so $_{\Lambda} \Omega^l (M)$ is a projective $\Lambda$-module. By (5) of Proposition 3.5, $\Omega^l (M)$ must be projective. Thus $\gldim \Lambda S \leqslant \gldim \Lambda < \infty$.

It is always true that $\gldim \Lambda S \geqslant \gldim \Lambda$. In the above proof we have actually shown that the other inequality is also true if $\gldim \Lambda < \infty$ and $S$ acts freely on $E$. Since in this situation $\Lambda S$ is Morita equivalent to $\Lambda ^S$, we have the required identity.

(2): If $\Lambda$ is an Auslander algebra, then $\gldim \Lambda \leqslant 2$. Since $G$ acts freely on $E$, we know that $\gldim \Lambda G = \gldim \Lambda \leqslant 2$ by (1). Moreover, the minimal injective resolution of $\Lambda G$-modules
\begin{equation*}
\xymatrix{0 \ar[r] & \Lambda G \ar[r] & I_0 \ar[r] & I_1 \ar[r] & I_2 \ar[r] & 0}
\end{equation*}
gives rises to
\begin{equation*}
\xymatrix{0 \ar[r] & _{\Lambda} \Lambda G \ar[r] & _{\Lambda} I_0 \ar[r] & _{\Lambda} I_1 \ar[r] & _{\Lambda} I_2 \ar[r] & 0}
\end{equation*}
which contains a minimal injective resolution of $\Lambda$ as a summand since $_{\Lambda} \Lambda G \cong \Lambda ^{|G|}$. Since $_{\Lambda} I_0$ and $_{\Lambda} I_1$ are both injective and projective, By (5) of Proposition 3.5, $I_0$ and $I_1$ are both projective and injective. Therefore, $\Lambda G$ is an Auslander algebra.

Conversely, if $\Lambda G$ is an Auslander algebra, then $\gldim \Lambda G \leqslant 2$. Therefore, $\gldim \Lambda \leqslant 2$ and $G$ must act freely on $E$ by (1). As explained in the previous paragraph, a minimal injective resolution of $\Lambda G$ gives rise to a minimal injective resolution of $\Lambda$ satisfying the required condition. Thus $\Lambda$ is an Auslander algebra.

(3): If $\Lambda S$ is of finite representation type, then $\Lambda$ has finite representation type by Corollary 2.2, and the action of $S$ on $E$ must be free by the second part of Proposition 3.3. Consequently, $\Lambda ^S$ is Morita equivalent to $\Lambda S$ by (4) of Proposition 3.5, and is of finite representation type.

Conversely, if $S$ acts freely on $E$, again $\Lambda S$ is Morita equivalent to $\Lambda ^S$, and its finite representation type implies the finite representation type of $\Lambda S$. If furthermore $\Lambda \cong \Lambda^S \oplus B$ as $\Lambda^S$-bimodules, then every indecomposable $\Lambda^S$-module $M$ is isomorphic to a summand of $M \uparrow _{\Lambda^S} ^{\Lambda} \downarrow _{\Lambda^S} ^{\Lambda}$. That is, $M$ is isomorphic to a summand of $V \downarrow _{\Lambda^S} ^{\Lambda}$ for a certain indecomposable $\Lambda$-module $V$. If $\Lambda$ is of finite representation type, so are $\Lambda^S$ and $\Lambda S$.
\end{proof}

\section{Representation types of transporter categories}

 Throughout this section let $\mathcal{P}$ be a connected finite poset with a partial order $\leqslant$ and $G$ be a finite group of automorphisms on $\mathcal{P}$. We consider the \textit{transporter category} $\mathcal{T} = G \propto \mathcal{P}$. The category $\mathcal{T}$ has the same objects as $\mathcal{P}$ (viewed as a finite category). All morphisms in $\mathcal{T}$ are of the form $\alpha g$, where $g \in G$ and $\alpha: x \rightarrow y$ is a morphism in $\mathcal{P}$ (which means $x \leqslant y$ in $\mathcal{P}$). Note that $\alpha g$ has source $g^{-1} (x)$ and target $y$. For objects $x, y \in \Ob \mathcal{T}$, we denote the set of morphisms from $x$ to $y$ by $\mathcal{T} (x, y)$, and in particular, we let $G_x = \mathcal{T} (x, x)$. Transporter categories are examples of \textit{finite EI categories}, i.e, small categories with finitely many morphisms such that every endomorphism is an isomorphism. Representations of these structures have been studied in \cite{Li1,Li2,Webb1,Webb2,Xu1,Xu2}.

Statements in the following proposition are immediate results of the definition, see \cite{Xu2}.

\begin{proposition}
Let $G, \mathcal{P}$ and $\mathcal{T}$ be as above. Then:
\begin{enumerate}
\item for each object $x$, $G_x = \{ g \in G \mid g(x) = x \}$;
\item for objects $x, y$ such that $\mathcal{T} (x, y) \neq \emptyset$, both $G_x$ and $G_y$ act freely on $\mathcal{T} (x, y)$;
\item two objects $x, y \in \Ob \mathcal{T}$ are isomorphic if and only if they are in the same $G$-orbit.
\end{enumerate}
\end{proposition}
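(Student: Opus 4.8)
The plan is to reduce all three statements to a single combinatorial description of the hom-sets. The key preliminary observation is that, since $\mathcal{P}$ is a poset, there is at most one morphism between any two of its objects, so a morphism $\alpha g \in \mathcal{T}(x,y)$ is completely determined by the group element $g$: indeed $\alpha g$ has source $g^{-1}(a)$ and target $b$ where $\alpha\colon a \to b$ in $\mathcal{P}$, and forcing source $x$ and target $y$ gives $a = g(x)$, $b = y$, with $\alpha$ the unique poset morphism $g(x) \to y$. Hence I would first establish the bijection
\begin{equation*}
\mathcal{T}(x,y) \cong \{ g \in G \mid g(x) \leqslant y \},
\end{equation*}
under which composition corresponds to multiplication in $G$: reading off the skew-group product $(\beta h)(\alpha g) = \beta\, {}^h\alpha\, hg$, a morphism indexed by $g$ followed by one indexed by $g'$ is indexed by $g'g$, and the identity $\mathrm{id}_x$ is indexed by $1_G$. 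Everything else follows by translating statements about morphisms into statements about this subset of $G$.

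For (1), I would specialize the bijection to $y = x$: an endomorphism corresponds to $g \in G$ with $g(x) \leqslant x$. The crux is to upgrade $g(x) \leqslant x$ to $g(x) = x$. Here I would use that $g$ is an automorphism of a \emph{finite} poset, so it has finite order $m$; iterating $g(x) \leqslant x$ yields the chain $g^m(x) \leqslant \dots \leqslant g(x) \leqslant x$ with $g^m(x) = x$, forcing equality throughout and in particular $g(x) = x$. Conversely $g(x) = x$ certainly gives $g(x) \leqslant x$, so $G_x \cong \{ g \in G \mid g(x) = x \}$, as claimed. For (2), note that $G_y = \mathcal{T}(y,y)$ acts on $\mathcal{T}(x,y)$ on the left by post-composition and $G_x$ acts on the right by pre-composition. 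Under the bijection, an element $h \in G_y$ (so $h(y)=y$) sends the morphism indexed by $g$ to the one indexed by $hg$, which is admissible because $h(g(x)) \leqslant h(y) = y$; and $k \in G_x$ sends it to $gk$, admissible since $(gk)(x) = g(x) \leqslant y$. Thus the two actions are nothing but left and right translation inside $G$, which are visibly free, giving freeness of both.

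Finally, for (3): if $y = g(x)$ then $g$ satisfies $g(x) = y \leqslant y$, so it indexes a morphism $x \to y$, whose inverse is indexed by $g^{-1}$ (note $g^{-1}(y) = x \leqslant x$), and their composites are indexed by $g^{-1}g = 1_G$ and $gg^{-1} = 1_G$, i.e.\ the identities; hence $x \cong y$. Conversely, an isomorphism $x \to y$ with inverse is indexed by some $g$ with $g(x) \leqslant y$ and some $h$ with $h(y) \leqslant x$, and applying the identification $\mathrm{id}_x \leftrightarrow 1_G$ to the composite $h g$ forces $h = g^{-1}$; then $g(x) \leqslant y$ together with $g^{-1}(y) \leqslant x$ (equivalently $y \leqslant g(x)$, after applying the order-preserving $g$) yields $g(x) = y$, so $x$ and $y$ lie in the same $G$-orbit. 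I expect the only genuine subtlety to be the finite-order argument in (1): it is exactly where the finiteness of $\mathcal{P}$ (equivalently, of the order of each $g$) enters, and it is what makes $\mathcal{T}$ an EI category; once the bijection of the first paragraph is in hand, the remaining steps are routine bookkeeping in $G$.
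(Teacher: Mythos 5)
Your proof is correct, and the paper itself offers no argument here --- it simply declares the statements ``immediate results of the definition'' and cites Xu --- so your identification $\mathcal{T}(x,y)\cong\{g\in G\mid g(x)\leqslant y\}$, with composition corresponding to multiplication in $G$, is exactly the standard verification being left implicit. The only cosmetic quibble is that the finite order of $g$ comes already from the finiteness of $G$ (one does not need to invoke the finiteness of $\mathcal{P}$), but this changes nothing.
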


We focus on the representation type of $\mathcal{T}$. By definition, a \textit{representation} of $\mathcal{T}$ is a covariant functor from $\mathcal{T}$ to $k$-vec, the category of finite-dimensional vector spaces. Since the category of representations of $\mathcal{T}$ is equivalent to the category of finitely generated $k\mathcal{T}$-modules, where $k\mathcal{T}$ is the \textit{category algebra} of $\mathcal{T}$ \cite{Webb1}, we identify in this paper representations of $\mathcal{T}$ and $k\mathcal{T}$-modules. In \cite{Xu2} Xu shows that the category algebra $k\mathcal{T} \cong k\mathcal{P} G$, the skew group algebra of the incidence algebra $k\mathcal{P}$. Clearly, the identity morphisms $\{ 1_x \} _{x \in \Ob \mathcal{T}}$ form a complete set of primitive orthogonal idempotent of $k \mathcal{P}$ which is closed under the action of $G$.

We first point out that the representation type of $\mathcal{T}$ is not completely determined by the representation types of $kG$ and $\mathcal{P}$. Actually, the action of $G$ on $\mathcal{P}$ plays an important role, as shown by the following example.

\begin{example}
Let $\mathcal{P}$ be the following poset and $G = \langle g \rangle$ be a cyclic group of order $p = 3$.
\begin{equation*}
\xymatrix{ & x \\ \bullet \ar[ur] \ar[r] \ar[dr] & y \\ & z}
\end{equation*}

If $G$ acts trivially on $\mathcal{P}$, then $k\mathcal{T} \cong k\mathcal{P} [X] / (X^3]$. Using covering theory described in \cite{Bongartz} we can show that it is of infinite representation type.

On the other hand, there is another action such that $g(x) = y$ and $g(y) = z$. In this case the category algebra of the skeletal subcategory $\mathcal{E}$ of $\mathcal{T}$ is isomorphic to the path algebra of the following quiver with relation $\delta^3 = 0$:
\begin{equation*}
\xymatrix{ \mathcal{E}: & x \ar@(ld,lu)[]|{\delta} \ar[r] & y}.
\end{equation*}
Since this path algebra is of finite representation type, so is $k\mathcal{T}$.
\end{example}

From this example we find that the finite representation types of $G$ and $\mathcal{P}$ do not guarantee the finite representation type of $\mathcal{T}$. Conversely, suppose that $\mathcal{T}$ is of finite representation type. Although by Corollary 2.2, $\mathcal{P}$ must be of finite representation type as well (finite connected posets of finite representation type have been classified by Loupias in \cite{Loupias}), $G$ may be of infinite representation type, as described in the following example.

\begin{example}
Let $\mathcal{P}$ be the poset of 4 incomparable elements and $G$ be the symmetric group over 4 letters permuting these 4 incomparable elements. Let $k$ be an algebraically closed field of characteristic 2. Then $kG$ is of infinite representation type. However, the four objects in the transporter category $\mathcal{T}$ are all isomorphic and the automorphism group of each object is a symmetric group $H$ over 3 letters. Therefore, the category algebra $k \mathcal{T}$ is Morita equivalent to $kH$, which is of finite representation type.
\end{example}

Now we can show that $\mathcal{T}$ is of finite representation type only for very few cases when $p \neq 2, 3$.

\begin{proposition}
Let $G$ be a finite group acting on a connected finite poset $\mathcal{P}$ and suppose that $p \neq 2, 3$. Let $S$ be a Sylow $p$-subgroup of $G$. If the transporter category $\mathcal{T} = G \propto \mathcal{P}$ is of finite representation type, then one of the following conditions must be true:
\begin{enumerate}
\item $|G|$ is invertible in $k$ and $\mathcal{P}$ is of finite representation type;
\item $\mathcal{P}$ has only one element and $G$ is of finite representation type;
\item if $S \neq 1$, then the skeletal category $\mathcal{E}$ of the transporter category $\mathcal{S} = S \propto \mathcal{P}$ is a poset of finite representation type.
\end{enumerate}
\end{proposition}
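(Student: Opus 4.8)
The plan is to reduce everything to the $p$-group $S$ and then run the structural results of Section 3, splitting into cases according to whether $S$ is trivial and how many objects $\mathcal{P}$ has. First I would record that, by Xu's theorem applied to the action of $S$, the category algebra of $\mathcal{S} = S \propto \mathcal{P}$ is the skew group algebra $k\mathcal{P}\, S$, and that $\{1_x\}_{x \in \Ob \mathcal{P}}$ is a complete set of primitive orthogonal idempotents of $k\mathcal{P}$ closed under the action of $G$, hence of $S$. Since $|G:S|$ is prime to $p$ and therefore invertible in $k$, Corollary 2.2(4) gives that $\mathcal{T}$ is of finite representation type if and only if $\mathcal{S}$ is, i.e. if and only if $k\mathcal{P}\, S$ is. So from now on I assume $k\mathcal{P}\, S$ is of finite representation type and argue by cases.

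If $S = 1$, then $p \nmid |G|$, so $|G|$ is invertible in $k$; by Corollary 2.3 the finite representation type of $k\mathcal{P}\, G$ forces that of $k\mathcal{P}$, which is conclusion (1). If $S \neq 1$ and $\mathcal{P}$ has a single object, then $k\mathcal{P} = k$, the action is trivial, $k\mathcal{T} = kG$, and finite representation type of $\mathcal{T}$ is exactly that of $G$, giving conclusion (2). The remaining case, $S \neq 1$ with $\mathcal{P}$ having at least two objects (so $k\mathcal{P}$ is not local), is where the real work lies. Here $p \geq 5$, so Proposition 3.3(2) applies: finite representation type of $k\mathcal{P}\, S$ forces $S$ to act freely on $E = \{1_x\}$, i.e. freely on $\Ob \mathcal{P}$. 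By Lemma 3.2 the algebra $k\mathcal{P}\, S$ is then Morita equivalent to $\epsilon (k\mathcal{P}\, S)\epsilon$, which is the category algebra $k\mathcal{E}$ of the skeletal category $\mathcal{E}$ of $\mathcal{S}$; hence $\mathcal{E}$ is of finite representation type.

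It remains to show that $\mathcal{E}$ is a poset. Since $S$ acts freely, every object of $\mathcal{S}$, and hence of $\mathcal{E}$, has trivial automorphism group; as $\mathcal{E}$ is a finite EI category this already yields antisymmetry, for if there were morphisms $\bar x \to \bar y$ and $\bar y \to \bar x$ then their two composites would be endomorphisms, hence identities, forcing $\bar x \cong \bar y$ and so $\bar x = \bar y$ by skeletality. The one thing left to exclude is parallel morphisms. I would argue by contradiction: suppose $\bar x \neq \bar y$ admit $m \geq 2$ distinct morphisms $\bar x \to \bar y$. By antisymmetry there are no morphisms $\bar y \to \bar x$ and no nontrivial endomorphisms, so the corner algebra $\Gamma = (\epsilon_{\bar x} + \epsilon_{\bar y}) k\mathcal{E} (\epsilon_{\bar x} + \epsilon_{\bar y})$ has radical square zero and is isomorphic to the generalized Kronecker algebra on $m$ arrows. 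For $m \geq 2$ this algebra is of infinite representation type; but by Auslander's embedding (Proposition 2.5 of \cite{Auslander}, used already in the proof of Proposition 3.3) $\Gamma$-mod embeds in $k\mathcal{E}$-mod, so $k\mathcal{E}$ would be of infinite representation type, a contradiction. Hence $|\mathcal{E}(\bar x, \bar y)| \leq 1$ for all objects, and together with antisymmetry this makes $\mathcal{E}$ a poset of finite representation type, which is conclusion (3).

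The main obstacle is precisely this last step, ruling out parallel morphisms in $\mathcal{E}$. Freeness of the action alone does \emph{not} make $\mathcal{E}$ a poset: a cyclic group swapping the minimal elements and the maximal elements of a ``complete bipartite'' poset produces a Kronecker quiver as its skeletal category, so finite representation type is genuinely needed here. The crux is therefore to recognize the relevant two-object corner algebra as a generalized Kronecker algebra and to invoke its infinite representation type to force the contradiction; everything else is routine bookkeeping with the results of Sections 2 and 3.
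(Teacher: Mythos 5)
Your proposal is correct and follows essentially the same route as the paper: reduce to the Sylow subgroup $S$ via Corollary 2.2, dispose of the cases $S=1$ and $|\Ob\mathcal{P}|=1$, then use Proposition 3.3(2) to get freeness of the action, deduce trivial automorphism groups in the skeleton $\mathcal{E}$, and rule out parallel morphisms by exhibiting a Kronecker quiver as a full subcategory (corner algebra). Your explicit treatment of antisymmetry via the EI property and the identification of the two-object corner algebra as a generalized Kronecker algebra merely spell out steps the paper leaves implicit.
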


\begin{proof}
Suppose that $\mathcal{T}$ is of finite representation type, or equivalently, $k\mathcal{T}$ is of finite representation type. If $|G|$ is invertible, we immediately get (1). If $\mathcal{P}$ has only one element, (2) must be true. Therefore, assume that $|G|$ is not invertible and $\mathcal{P}$ is nontrivial. Thus $S \neq 1$ and $\mathcal{E}$ is a connected skeletal finite EI category with more than one objects. By Corollary 2.2, $\mathcal{S}$ and hence $\mathcal{E}$ are of finite representation type.

Note that $k\mathcal{P}$ is not a local algebra. By Proposition 3.3, $S$ must act freely on $\mathcal{P}$, so $\mathcal{S} (x, x) = 1$ for every $x \in \mathcal{S}$. Consequently, $\mathcal{E} (x,x) = 1$ for every $x \in \Ob \mathcal{E}$, and $\mathcal{E}$ is an acyclic category, i.e., a skeletal finite EI category such that the automorphism group of every object is trivial. Since $\mathcal{E}$ is of finite representation type, we conclude that $|\mathcal{E} (x, y)| \leqslant 1$ for all $x, y \in \Ob \mathcal{E}$. Otherwise, we can find some $x, y \in \Ob \mathcal{E}$ such that there are at least two morphisms from $x$ to $y$. Considering the full subcategory of $\mathcal{E}$ formed by $x$ and $y$ we get a Kronecker quiver, which is of infinite representation type. This is impossible.

We have shown that $\mathcal{E}$ is a skeletal finite EI category such that the automorphism groups of all objects are trivial and the number of morphisms between any two distinct objects cannot exceed 1. It is indeed a poset. Clearly, it must be of finite representation type.
\end{proof}

Therefore, for $p \neq 2,3$ and nontrivial $\mathcal{P}$, if $\mathcal{T}$ is of finite representation type, then all Sylow $p$-subgroups of $G$ must act freely on the poset $\mathcal{P}$. In the rest of this paper we will show that third case cannot happen. That is, if a nontrivial Sylow $p$-subgroup $S \leqslant G$ acts freely on the poset, then $\mathcal{T}$ must be of infinite representation type.

First we describe a direct corollary of this proposition.

\begin{corollary}
Let $G$ be a finite group having a Sylow $p$-subgroup acting nontrivially on a connected finite poset $\mathcal{P}$. If $p \geqslant 5$ and $\mathcal{P}$ is a directed tree, then the transporter category $\mathcal{T} = G \propto \mathcal{P}$ is of infinite representation type.
\end{corollary}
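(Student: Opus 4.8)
The plan is to argue by contradiction and reduce the claim to a combinatorial impossibility, using the free-action criterion already available from Section 3. So suppose $\mathcal{T} = G \propto \mathcal{P}$ were of finite representation type. Since the Sylow $p$-subgroup $S$ acts nontrivially on $\mathcal{P}$, it is nontrivial, and a one-object poset would admit only the identity automorphism; hence $\mathcal{P}$ has more than one object, so its incidence algebra $k\mathcal{P}$ is not local. Writing $\Lambda = k\mathcal{P}$, recall that $k\mathcal{T} \cong \Lambda G$, so by Corollary 2.2 the finite representation type of $\Lambda G$ passes to $\Lambda S = k\mathcal{P}S$. Because $p \geqslant 5$ we have $p \neq 2,3$, so Proposition 3.3(2) applies to the non-local algebra $\Lambda$ and forces $S$ to act freely on the complete set of idempotents $\{1_x\}_{x \in \Ob \mathcal{P}}$, that is, freely on the objects of $\mathcal{P}$.

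The core of the proof is then the elementary fact that a nontrivial finite group cannot act freely on a finite directed tree, which I would establish by counting on the Hasse diagram. Let $V$ be the number of objects of $\mathcal{P}$; since $\mathcal{P}$ is connected and its Hasse diagram is a tree, there are exactly $V - 1$ covering relations. Freeness on objects gives $|S| \mid V$. Next I would check that the induced action of $S$ on covering relations is again free: if $g \in S$ fixes a covering relation $x \lessdot y$, then $g(x) < g(y)$ forces $g(x) = x$ and $g(y) = y$ (order-preservation rules out interchanging the endpoints), whence $g = 1_S$ by freeness on objects. Therefore $|S| \mid V - 1$ as well.

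Combining the two divisibilities, $|S|$ divides $\gcd(V, V-1) = 1$, so $|S| = 1$, contradicting $S \neq 1$. Hence $\mathcal{T}$ must be of infinite representation type. The same obstruction can be phrased through Euler characteristics — the Hasse diagram of a tree has Euler characteristic $V - (V-1) = 1$, which a free $S$-action would require $|S|$ to divide — but the direct orbit count is the most transparent.

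I expect the only delicate point to be the bookkeeping in the first paragraph: one must confirm that the nontrivial action genuinely excludes the two exceptional cases of the previous proposition (invertible group order and a single object) so that Proposition 3.3(2) is applicable and actually delivers freeness. Once that is secured, the genuinely new ingredient is merely the observation that $\gcd(V, V-1) = 1$ obstructs any free action of a nontrivial group on a tree, and the contradiction is immediate.
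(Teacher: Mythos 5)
Your proposal is correct and follows essentially the same route as the paper: reduce to the Sylow $p$-subgroup via Corollary 2.2, invoke Proposition 3.3(2) to force a free action on the objects of $\mathcal{P}$, and then derive a contradiction from the fact that a tree on $n$ vertices has $n-1$ arrows, so $|S|$ would have to divide both $n$ and $n-1$. Your extra justification that the induced action on covering relations is free (order-preservation prevents swapping the endpoints) fills in a detail the paper only asserts.
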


\begin{proof}
Without loss of generality we assume that $G$ is a $p$-group. Since the action of $G$ is nontrivial, $\mathcal{P}$ must be nontrivial as well. Therefore, if $\mathcal{T}$ is of finite representation type, $G$ must act freely on $\mathcal{P}$. Suppose that $\mathcal{P}$ has $n$ elements, $n \geqslant 2$. Then it has $n-1$ arrows. But $G$ acts freely both on the set of elements and on the set of arrows. Therefore, $|G|$ divides both $n$ and $n-1$. This forces $|G| = 1$, which is impossible.
\end{proof}

The following lemma is useful to determine the representation types of $\mathcal{T}$ for some cases.

\begin{lemma}
Let $G$ be a finite group acting freely on a connected finite poset $\mathcal{P}$. If there exist distinct elements $x, y, z \in \mathcal{P}$ such that $x < y$ and $x < z$ (or $x > y $ and $x > z$) and $y, z$ are in the same $G$-orbit, then $\mathcal{T} = G \propto \mathcal{P}$ is of infinite representation type.
\end{lemma}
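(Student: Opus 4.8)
The plan is to pass to the skeleton of $\mathcal{T}$ and exhibit a generalized Kronecker quiver as a full subcategory. First I would reduce to the skeletal category. By Lemma 3.2 the category algebra $k\mathcal{T} = k\mathcal{P}G$ is Morita equivalent to $\epsilon k\mathcal{P}G \epsilon$, which is the category algebra $k\mathcal{E}$ of the skeleton $\mathcal{E}$ of $\mathcal{T}$; since Morita equivalence preserves representation type, it suffices to prove that $k\mathcal{E}$ is of infinite representation type. Because $G$ acts freely on $\mathcal{P}$, Proposition 4.1(1) gives $G_x = 1$ for every object $x$, so every endomorphism in $\mathcal{T}$ is an identity and $\mathcal{E}$ is an acyclic category whose objects are the $G$-orbits (Proposition 4.1(3)). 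Write $\bar x$, $\bar y$, $\bar z$ for the orbits of $x, y, z$; by hypothesis $\bar y = \bar z$.

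Next I would check that $\bar x \neq \bar y$ and count the morphisms between them. If $x$ and $y$ were in the same orbit, say $y = h(x)$ with $h \neq 1_G$, then applying the order-preserving automorphism $h$ repeatedly to $x < y$ would produce a strictly increasing chain $x < h(x) < h^2(x) < \cdots$; as $h$ has finite order this loops back to $x$, contradicting antisymmetry. Hence $\bar x \neq \bar y$. Using the description $\mathcal{T}(x,y) = \{ g \in G \mid g(x) \leqslant y \}$ (one morphism $\alpha g$ for each such $g$, since $\alpha$ is the unique comparison $g(x) \to y$), the element $g = 1_G$ qualifies because $x < y$, and if $z = g_0(y)$ then $g_0^{-1}$ qualifies because $x < z = g_0(y)$ yields $g_0^{-1}(x) < y$; these two morphisms are distinct since $g_0^{-1} \neq 1_G$ (as $z \neq y$). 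Therefore $|\mathcal{E}(\bar x, \bar y)| = |\mathcal{T}(x,y)| \geqslant 2$.

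Then I would identify the relevant full subcategory. Since $\mathcal{E}$ is acyclic and $\bar x \neq \bar y$, there can be no morphism $\bar y \to \bar x$ (a morphism each way would compose to a nonidentity endomorphism, forcing $\bar x \cong \bar y$), and the only endomorphisms are identities. Thus the full subcategory $\mathcal{D}$ of $\mathcal{E}$ on $\{ \bar x, \bar y \}$ is a generalized Kronecker quiver with $m = |\mathcal{E}(\bar x, \bar y)| \geqslant 2$ parallel arrows and no relations, whose path algebra is of infinite representation type. Finally, setting $f = 1_{\bar x} + 1_{\bar y} \in k\mathcal{E}$, we have $f k\mathcal{E} f = k\mathcal{D}$, and by Proposition 2.5 in page 35 of \cite{Auslander}, exactly as in the proof of Proposition 3.3, $k\mathcal{D}$-mod is equivalent to a subcategory of $k\mathcal{E}$-mod; hence $k\mathcal{E}$, and therefore $k\mathcal{T}$, is of infinite representation type. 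The dual hypothesis $x > y$ and $x > z$ is handled by applying the same argument to the opposite poset $\mathcal{P}^{\mathrm{op}}$, since $k(G \propto \mathcal{P}^{\mathrm{op}}) \cong (k\mathcal{T})^{\mathrm{op}}$ has the same representation type.

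\textbf{The main obstacle} is verifying that the full subcategory on $\{ \bar x, \bar y \}$ is genuinely a generalized Kronecker quiver: one must correctly apply the transporter-category morphism conventions to produce two distinct parallel morphisms, use freeness to exclude reverse morphisms and nonidentity endomorphisms, and confirm $\bar x \neq \bar y$. Once the Kronecker subquiver is in hand, its infinite representation type and the transfer of this property up to $k\mathcal{E}$ are standard.
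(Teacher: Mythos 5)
Your proposal is correct and follows essentially the same route as the paper: pass to the skeleton $\mathcal{E}$ (where free action forces trivial automorphism groups), exhibit two distinct morphisms $\bar x \to \bar y$ coming from $1_G$ and from the group element carrying $z$ to $y$, and conclude via the resulting generalized Kronecker subquiver. Your version merely spells out a few points the paper leaves implicit ($\bar x \neq \bar y$, absence of reverse morphisms, and the idempotent-truncation transfer of infinite representation type).
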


\begin{proof}
Note that $G$ is nontrivial since one of its orbits contains at least two elements. Without loss of generality we assume that there are elements $x, y, z \in \mathcal{P}$ such that $x < y$ and $x < z$ where $y$ and $z$ are in the same $G$-orbit. Then we can find some $1_G \neq g \in G$ with $g(z) = y$. Let $\alpha$ and $\beta$ be the morphisms $x < y$ and $x < z$ in $\mathcal{P}$ respectively.

Consider the skeletal category $\mathcal{E}$ of $\mathcal{T}$. Since $G$ acts freely on $\mathcal{P}$, $G_u = \mathcal{E} (u, u) = 1$ for every $u \in \Ob \mathcal{P}$. Note that elements in $\mathcal{P}$ lie in disjoint $G$-orbits, and the objects of $\mathcal{E}$ form a set of representatives of these $G$-orbits. Clearly, we can let $x, y \in \Ob \mathcal{E}$ since they are in distinct $G$-orbits.

We claim that there are more than one morphisms from $x$ to $y$ in $\mathcal{E}$. If this is true, then $\mathcal{E}$ must be of infinite representation type since $\mathcal{E} (x, x) \cong 1 \cong \mathcal{E} (y, y)$, and the conclusion follows.

Clearly, $\alpha 1_G$ is a morphism in $\mathcal{E} (x, y)$. Another morphism in this set is $g (\beta) g$. Indeed, let $1_y1_G$ and $1_x1_G$ be the identity morphism of $y$ and $x$ in $\mathcal{S},$ we check:
\begin{align*}
1_y 1_G \cdot g (\beta) g = 1_y g(\beta) g = g(\beta) g
\end{align*}
since $g(\beta)$ has target $y$; and
\begin{align*}
g(\beta) g \cdot 1_x 1_G = g (\beta 1_x) g = g(\beta) g
\end{align*}
since $\beta$ has source $x$. Thus there are indeed at least two morphisms $\alpha 1_G$ and $g(\beta) g$ in $\mathcal{E} (x, y)$.
\end{proof}

\begin{example}
Let $\mathcal{P}$ be the following poset and $G = \langle g \rangle$ be a cyclic group of order $p = 3$. The action of $G$ on $\mathcal{P}$ is defined by $g (n) = (n + 2) \mod 6$, $1 \leqslant n \leqslant 6$.
\begin{equation*}
\xymatrix{ & & 1 \ar[ld] \ar[ldd] \ar[lddd] \\
 & 2 &\\
3 \ar[ru] \ar[r] \ar[rd] & 4 & 5 \ar[lu] \ar[l] \ar[ld]\\
 & 6 &}
\end{equation*}

Since $G$ acts freely on $\mathcal{P}$, we get $G_n = 1$ for $1 \leqslant n \leqslant 6$. But for every element in this poset we can find three arrows starting or ending at elements in the same $G$-orbit. Therefore, the transporter category $G \propto \mathcal{P}$ is of infinite representation type.
\end{example}

The next lemma is used in the proof of the main result of this section.

\begin{lemma}
Let $\mathcal{P}$ be a connected finite poset and $G$ be a nontrivial finite group acting on it freely. Then there exists a non-oriented cycle $\mathcal{C}$ in $\mathcal{P}$ such that $\mathcal{C}$ is closed under the action of some $1_G \neq g \in G$, and every element in $\mathcal{C}$ (viewed as a poset) is either maximal or minimal.
\end{lemma}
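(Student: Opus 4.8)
The plan is to reduce everything to the undirected comparability graph $K$ of $\mathcal{P}$ (with vertices the elements of $\mathcal{P}$ and edges the comparable pairs) and to build the desired cycle there. The first observation is that $G$, and hence every $1_G \neq g \in G$, acts \emph{freely on the edges} of $K$: if $g$ stabilizes an edge $\{x,y\}$ with, say, $x<y$, then either $g$ fixes both $x$ and $y$, which is impossible by freeness, or $g(x)=y$ and $g(y)=x$; but the latter forces $y=g(x)<g(y)=x$, contradicting $x<y$. Thus an order automorphism can neither fix nor reverse a comparable pair, a fact I will use repeatedly.

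Second, I produce a cycle invariant under a nontrivial element. Set $m=\min d_K(w,g(w))$, the minimum taken over all $w\in\mathcal{P}$ and all $1_G\neq g\in G$ (finite and positive since the action is free), and fix $w$, $g$ and a geodesic $c=(c_0=w,c_1,\ldots,c_m=g(w))$ realizing it. Writing $r$ for the order of $g$, the concatenation $\tilde{\mathcal{C}}=c\ast g(c)\ast\cdots\ast g^{r-1}(c)$ is a closed walk invariant under $g$, on which $g$ acts by the rotation $c_i\mapsto c_{i+m}$. Minimality of $m$ forces $\tilde{\mathcal{C}}$ to be a \emph{simple} cycle: any coincidence between two translates $g^i(c)$ and $g^j(c)$ beyond the forced consecutive gluings would place a vertex at distance $<m$ from a nontrivial translate of itself, contradicting the choice of $m$. (Alternatively, the mere existence of an invariant cycle follows from the fact that a nontrivial finite group cannot act freely on a tree.)

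Third — and this is the heart — I upgrade $\tilde{\mathcal{C}}$ to a \emph{crown}. The crucial principle is that a chordless cycle of $K$ is automatically an alternating cycle in which every vertex is maximal or minimal in the induced subposet: by transitivity, two consecutive comparabilities pointing the same way would create a chord, so the directions must alternate; and chordlessness means the induced subposet carries only the cycle relations, so each ``upper'' vertex dominates exactly its two neighbours (hence is maximal) and dually for ``lower'' vertices. It therefore suffices to exhibit a \emph{chordless} $g$-invariant cycle, and I take one of minimal length among all $g$-invariant cycles for all $1_G\neq g$. As in the second step, $g$ acts on such a cycle as a fixed-point-free rotation — a reflection is excluded because it would swap an adjacent, hence comparable, pair. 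Nontriviality and freeness already force the length to be at least $4$: an invariant single edge is impossible as in the first step, and an invariant comparability triangle is a $3$-chain whose cyclic rotation would reverse a comparability.

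The main obstacle is proving that this minimal $g$-invariant cycle is chordless. The classical fact that a shortest cycle is chordless splits a chord into two shorter cycles, but neither of these need be $g$-invariant, so minimality cannot be invoked naively. I would instead exploit the rotation structure: a chord together with all its $g$-translates forms a $g$-invariant family of chords, and I would show that any such family can be spliced with suitable arcs of the cycle to produce a strictly shorter $g$-invariant cycle (for instance, a chord joining a vertex to one of its own $g$-translates at once yields the shorter invariant cycle running through a single $\langle g\rangle$-orbit), contradicting minimality. Carrying out this orbit-wise shortcut in full generality while keeping the result a simple cycle is the delicate point; once it is done, the resulting chordless invariant cycle is the required $\mathcal{C}$, and the alternating maximal/minimal property comes for free from the principle above.
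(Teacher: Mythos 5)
Your first two steps are sound and essentially reproduce the paper's construction: the paper likewise takes a shortest non-oriented path $\gamma$ between two distinct points of one $G$-orbit, observes that all its interior vertices lie in pairwise distinct orbits, and glues the translates $g^i(\gamma)$ into a simple $g$-invariant cycle $\tilde{\mathcal{C}}$. The problem is your third step, and it is not just the admitted incompleteness: the chordless $g$-invariant cycle you are aiming for \emph{need not exist}, so the strategy cannot be completed. The paper's own Example 4.7/4.9 is a counterexample. There $\mathcal{P}$ is the bipartite poset on $\{1,3,5\}\cup\{2,4,6\}$ with every odd element below every even element, and $g(n)=n+2 \bmod 6$ generates a free action of $\mathbb{Z}/3$. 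The comparability graph is $K_{3,3}$, so every cycle has even length; a cycle invariant under a nontrivial element (necessarily of order $3$, acting as a fixed-point-free rotation) has length divisible by $3$; hence every invariant cycle is a Hamiltonian hexagon, which omits $3$ of the $9$ edges of $K_{3,3}$ and therefore carries three chords. No splicing argument can produce a shorter invariant cycle here, so the minimal invariant cycle is genuinely not chordless.

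The way out is to notice that you are proving more than the lemma requires. As the lemma is actually applied in the proof of Proposition 4.11, ``maximal or minimal'' refers to the cycle itself: each vertex should be a source or a sink of the alternating cycle (comparabilities with both cyclic neighbours point the same way), not a maximal or minimal element of the induced subposet of $\mathcal{P}$; the interaction with chords is handled separately there, and only for a cycle of minimal length. This weaker statement follows directly from your $\tilde{\mathcal{C}}$ by the contraction the paper performs: replace each maximal monotone run $x_0<x_1<\cdots<x_k$ along $\tilde{\mathcal{C}}$ by the single comparability $x_0<x_k$ given by transitivity. The surviving vertices are exactly the local extrema of $\tilde{\mathcal{C}}$, so the result is still a simple cycle (its vertex set is a subset of that of $\tilde{\mathcal{C}}$), still closed under $g$, and it does not degenerate because $g$ permutes the local maxima (resp.\ minima) without fixed points, forcing at least two of each. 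With that substitution for your third step, your argument becomes a complete proof along the same lines as the paper's.
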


\begin{proof}
For every element $x \in \mathcal{P}$ we let $Gx$ be the $G$-orbit where $x$ lies. Since $\mathcal{P}$ is connected, for every $x \in \mathcal{P}$ and each $x \neq y \in Gx$ we can find a non-oriented path connecting $x$ and $y$. Let $\gamma$ be a non-oriented path satisfying the following conditions: the two endpoints $x$ and $y$ of $\gamma$ are distinct and lie in the same $G$-orbit; $\gamma$ has the minimal length. This $\gamma$ exists (although might not be unique) since $\mathcal{P}$ is connected.

Note that the length of $\gamma$ is at least 2 since its two endpoints are in the same $G$-orbit and they are incomparable in $\mathcal{P}$. Moreover, distinct elements in $\gamma$ lie in different $G$-orbits except the two endpoints. Indeed, if there are two vertices $u$ and $v$ in $\gamma$ lying in the same $G$-orbit and $\{u, v\} \neq \{x, y\}$, then we can take a proper segment of $\gamma$ with endpoints $u$ and $v$ satisfying the required conditions. This contradicts the assumption that $\gamma$ is shortest among all paths satisfying the required conditions.

By the construction of $\gamma$, we can find a unique $1_G \neq g \in G$ such that $y = g(x)$. Note that $n = |g| > 1$. For $0 \leqslant i \leqslant n-1$, let $g^i(\gamma)$ be the corresponding path (which is isomorphic to $\gamma$ as posets) starting at $g^i(x)$ and ending at $g^i(y)$. Since $g^i (y) = g^{i+1} (x)$, we can glue these paths to get a loop $\tilde {\mathcal{C}}$. Since $G$ acts freely on $\mathcal{P}$, and elements in each $g^i (\gamma)$ (except the two endpoints) are in distinct $G$-orbits, we deduce that $\tilde {\mathcal{C}}$ has no self-intersection. Therefore, it is a non-oriented cycle, and is closed under the action of $g$. Now we pick up all maximal elements and minimal elements in $\tilde{\mathcal{C}}$ and get another non-oriented cycle $\mathcal{C}$, which is still closed under the action of $g$. This cycle $\mathcal{C}$ is what we want.
\end{proof}

The following example explains our construction.

\begin{example}
Let $\mathcal{P}, G$ and $p$ be as in Example 4.7. Then $\gamma: 1 \rightarrow 2 \leftarrow 3$ is a path connecting $1$ and $3$. The cycle $\mathcal{C}$ is as below:
\begin{equation*}
\xymatrix{6 & 5 \ar[r] \ar[l] & 4 \\ 1 \ar[u] \ar[r] & 2 & 3 \ar[u] \ar[l]}
\end{equation*}
It is closed under the action of $G$.
\end{example}

Now we can show that the third case in Proposition 4.4 cannot happen.

\begin{proposition}
Let $G$ be a nontrivial finite group acting freely on a connected finite poset $\mathcal{P}$. Then the transporter category $\mathcal{T} = G \propto \mathcal{P}$ is of infinite representation type.
\end{proposition}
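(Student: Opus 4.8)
The plan is to move the problem off the transporter category entirely and down to the incidence algebra $k\mathcal{P}$, where the cycle supplied by the previous lemma turns into a hereditary algebra of Euclidean type. First I would invoke Lemma 4.8 to produce a non-oriented cycle $\mathcal{C}$ in $\mathcal{P}$, all of whose vertices are maximal or minimal. This is the only place the hypotheses enter: the existence of such a $\mathcal{C}$ is exactly what requires $G$ nontrivial, acting freely, with $\mathcal{P}$ connected. After this the group action does no further work, and in particular I would not attempt to analyze the skeleton of $\mathcal{T}$ or to locate a Kronecker subquiver there (the route through Lemma 4.6), since that requires a case split depending on how $g$ rotates $\mathcal{C}$ and fails in the cases where the skeleton happens to be a genuine poset.

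Next, set $e=\sum_{x\in\mathcal{C}}1_x\in k\mathcal{P}$ and let $Q$ be the full subposet of $\mathcal{P}$ on the vertex set of $\mathcal{C}$, so that $e(k\mathcal{P})e$ is canonically the incidence algebra $kQ$. The key point is that, by Lemma 4.8, every element of $Q$ is maximal or minimal, so $Q$ has height one: its comparabilities are all of the form (minimal) $<$ (maximal). Consequently $kQ$ is the path algebra of the bipartite quiver whose underlying graph is the comparability graph of $Q$ and whose arrows run from minimal to maximal elements; having no composable pair of arrows, this quiver has no relations, so $kQ$ is hereditary.

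Now I would apply Gabriel's theorem. The comparability graph of $Q$ contains $\mathcal{C}$, a simple bipartite cycle of length $\geq 4$, so the connected component of the quiver meeting $\mathcal{C}$ is not a tree and its underlying graph is not a Dynkin diagram; hence $kQ$ is of infinite representation type. Using the idempotent embedding $k\mathcal{P}e\otimes_{e(k\mathcal{P})e}(-)\colon kQ\text{-mod}\to k\mathcal{P}\text{-mod}$, which is full, faithful, and preserves indecomposability (Proposition 2.5 of \cite{Auslander}, exactly as applied in the proof of Proposition 3.3), the infinite representation type of $kQ=e(k\mathcal{P})e$ forces $k\mathcal{P}$ to be of infinite representation type as well. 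Finally, since $k\mathcal{T}\cong k\mathcal{P}G$, Corollary 2.2(1) gives the conclusion: if $k\mathcal{T}$ were of finite representation type then so would be $k\mathcal{P}$, a contradiction.

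The step I expect to carry the real content is the height-one observation, which is also where a naive attempt stumbles. One might worry that the induced subposet $Q$ carries comparabilities beyond the edges of $\mathcal{C}$, destroying the clean crown picture; this worry is precisely dissolved by Lemma 4.8, since any extra comparability is again of the form (minimal) $<$ (maximal) and so only enlarges the still-bipartite, still-cyclic comparability graph. Thus the truncation $e(k\mathcal{P})e$ remains hereditary and non-Dynkin no matter what extra relations $\mathcal{P}$ hides, which is what lets a single uniform argument replace the case analysis that a direct study of $\mathcal{T}$ would require.
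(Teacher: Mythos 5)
Your endgame (induced subposet on the cycle is a hereditary algebra whose quiver contains a cycle, hence non-Dynkin and of infinite type; lift back through the idempotent truncation and Corollary 2.2) is exactly the paper's Case II. But there is a genuine gap at the step you yourself flag as carrying the real content: the claim that the induced subposet $Q$ on the vertex set of $\mathcal{C}$ has height one. Lemma 4.8 does \emph{not} give this. It only asserts that every element of $\mathcal{C}$ is maximal or minimal \emph{in $\mathcal{C}$ viewed as a poset}, i.e., with respect to the comparabilities carried by the cycle itself; it says nothing about how the vertices of $\mathcal{C}$ compare inside $\mathcal{P}$. If $a$ and $a'$ are two local minima of the cycle with $a<a'$ in $\mathcal{P}$, then $Q$ contains the $3$-chain $a<a'<b$ for any cycle-neighbour $b$ of $a'$, so $Q$ is not height one, $kQ=e(k\mathcal{P})e$ acquires composable arrows and commutativity relations, and it is no longer a path algebra. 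At that point ``the comparability graph contains a cycle'' proves nothing: the commutative square poset has a $4$-cycle as comparability graph and its incidence algebra is of finite representation type. So your assertion that ``any extra comparability is again of the form (minimal) $<$ (maximal)'' is precisely the statement that needs proof, and it is not dissolved by Lemma 4.8.

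This is not a cosmetic omission: establishing that the cycle is a full (crown) subposet is the bulk of the paper's argument. The paper takes $\mathcal{C}$ of \emph{minimal length} among all such cycles, and shows that any extra comparability between cycle vertices either produces the configuration ``$x<y$, $x<z$ with $y,z$ in one $G$-orbit'' or yields a strictly shorter stabilized crown, a contradiction. The first alternative cannot be ruled out in general -- Example 4.9 exhibits a Lemma 4.8 cycle that is \emph{not} full -- which is why the paper must treat that configuration separately as Case I, via the Kronecker subquiver of the skeleton (Lemma 4.6), the very route you explicitly decline to take. To repair your write-up you would need either to reinstate the Case I / Case II dichotomy together with the minimality and path-shortening argument, or to prove directly (using the minimality of the path $\gamma$ inside Lemma 4.8's construction, not just the statement of the lemma) that no two local minima, no two local maxima, and no (max,\,min) pair of $\mathcal{C}$ are comparable in $\mathcal{P}$. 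As it stands, the proof assumes its hardest step.
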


\begin{proof}
We have two cases.\\

\textbf{Case I:} There are distinct $x, y, z \in \mathcal{P}$ such that either $x < y$ and $x < z$ or $x > y$ and $x > z$, and $y$ and $z$ are in the same $G$-orbit. Then by Lemma 4.6, $\mathcal{T}$ is of infinite representation type.\\

\textbf{Case II:} The situation described in Case I does not happen. By Lemma 4.8, there are non-oriented cycles $\mathcal{C}$ such that every element in $\mathcal{C}$ is either maximal or minimal, and $\mathcal{C}$ is closed under the action of some $1_G \neq g \in G$. Take such a cycle with minimal length and still denote it by $\mathcal{C}$. We claim that $\mathcal{C}$ is a full subcategory of $\mathcal{P}$. To establish this claim, it suffices to show that two incomparable elements in $\mathcal{C}$ are still incomparable in $\mathcal{P}$.

Let $x$ and $y$ be two incomparable elements in $\mathcal{C}$ and suppose that $x < y$ in $\mathcal{P}$. Then $x$ and $y$ must be in distinct $G$-orbits, and hence in distinct $H$-orbits, where $H = \langle g \rangle$. Moreover, $y$ cannot be adjacent to some element $x' \in  Hx$ in $\mathcal{C}$. Otherwise, we have two possible choices as follows,
\begin{equation*}
\xymatrix{x \ar[r] \ar[dr] & y' \\ x' \ar[r] & y}, \qquad \xymatrix{x \ar[dr] & y' \ar[l] \\ x' & y \ar[l],}
\end{equation*}
where $y$ and $y'$ are in the same $H$-orbit. Note that $x \neq x'$ and $y \neq y'$ since $x$ and $y$ have been assumed to be incomparable in $\mathcal{C}$. But in the first choice we get the situation described Case I, and in the second situation $y$ is neither maximal nor minimal in $\mathcal{C}$. Therefore, both situations cannot happen.

Now let $\gamma$ be a shortest path in $\mathcal{C}$ connecting $x$ and some element in $x \neq z \in Hx$. Then vertices of $\gamma$, except $x$ and $z$, are contained in distinct $H$-orbits. Consider the following picture:
\begin{equation*}
\xymatrix{\gamma: x \ar@{-}[r] \ar[drrr] & x_1 \ar@{-}[r] & \ldots \ar@{-}[r] & y' \ar@{-}[r] & \ldots \ar@{-}[r] & z\\
\gamma': x' \ar@{-}[r] & x_1' \ar@{-}[r] & \ldots \ar@{-}[r] & y \ar@{-}[r] & \ldots \ar@{-}[r] & z'},
\end{equation*}
where $\gamma$ and $\gamma'$ coincide if $y$ is contained in $\gamma$. Note that $y$ is distinct from $x, x', z, z'$ since $x, x', z, z'$ are all in the same $H$-orbit. Moreover, $y$ is not adjacent to $x, x', z, z'$ by the reasoning in the previous paragraph. Therefore, we get a non-oriented path:
\begin{equation*}
\xymatrix{\theta: x \ar[r] & y \ar@{-}[r] & \ldots \ar@{-}[r] & z'}.
\end{equation*}
Clearly, the length $l(\theta) < l(\gamma)$.

Now as we did in the proof of Lemma 4.8, by using $\theta$ and the action of $H$ on it we obtain a non-oriented cycle $\tilde {\mathcal{D}}$. Picking up maximal and minimal elements in $\tilde {\mathcal{D}}$ (viewed as a poset) we obtain another cycle $\mathcal{D}$ satisfying that it is closed under the action of $H$ and every element in $\mathcal{D}$ (viewed as a poset) is either maximal or minimal. But $\mathcal{D}$ is clearly shorter than $\mathcal{C}$. This contradicts our choice of $\mathcal{C}$. Therefore, $x$ and $y$ are incomparable in $\mathcal{P}$ as well, and our claim is proved. That is, $\mathcal{C}$ is a full subcategory of $\mathcal{P}$.

Since both the path $\gamma$ and the group $H$ is nontrivial, $\mathcal{C}$ has at least 4 objects. Moreover, every element in $\mathcal{C}$ is either maximal or minimal. Therefore, the structure of $\mathcal{C}$ is as below:
\begin{equation*}
\xymatrix{ & \bullet \ar[dr] \ar[dl] & & \bullet \ar[dr] \ar[dl] & \\
\ldots \ar@{--}[d] & & \bullet & & \ldots \ar@{--}[d] \\
\ldots & & \bullet & & \ldots\\
 & \bullet \ar[ul] \ar[ur] & & \bullet \ar[ul] \ar[ur] & }.
\end{equation*}
It is of infinite representation type either by Loupias's classification (viewing $\mathcal{C}$ as a poset) \cite{Loupias} or by Gabriel's theorem (viewing $\mathcal{C}$ as a quiver). Therefore, $\mathcal{P}$ is also of infinite representation type. By of Corollary 2.2, $\mathcal{T} = G \propto \mathcal{P}$ is of infinite representation type as well.
\end{proof}

This proposition is true for all characteristics and all finite groups. Note that in Example 4.9 the cycle $\mathcal{C}$ is not a full subcategory of $\mathcal{P}$ since the situation in Case I indeed happens. Moreover, the condition that $\mathcal{P}$ is connected is required. Consider the following example:

\begin{example}
Let $\mathcal{P}$ be the following poset and $G = \langle g \rangle$ be a cyclic group of order $p=2$, where $g(i) = i'$, $i = 1, 2$. Then $G$ acts freely on $\mathcal{P}$. But the skeletal category of $\mathcal{T} = G \propto \mathcal{P}$ is isomorphic to a component of $\mathcal{P}$ (viewed as a category), so is of finite representation type.
\begin{equation*}
\xymatrix{1 \ar[r] & 2 & 1' \ar[r] & 2'}.
\end{equation*}
\end{example}

We are ready to classify the representation types of transporter categories for $p \geqslant 5$.

\begin{theorem}
Let $G$ be a finite group acting on a connected finite poset $\mathcal{P}$ and suppose that $p \neq 2, 3$. Then the transporter category $\mathcal{T} = G \propto \mathcal{P}$ is of finite representation type if and only if one of the following conditions is true:
\begin{enumerate}
\item $|G|$ is invertible in $k$ and $\mathcal{P}$ is of finite representation type;
\item $\mathcal{P}$ has only one element and $G$ is of finite representation type.
\end{enumerate}
\end{theorem}

\begin{proof}
It suffices to show that the third case in Proposition 4.4 cannot happen. Let $S \neq 1$ be a Sylow $p$-subgroup of $G$ and suppose that $S$ acts freely on $\mathcal{P}$. Then $\mathcal{S} = S \propto \mathcal{P}$ is of infinite representation type by the previous proposition, so is $\mathcal{T}$ by Corollary 2.2.
\end{proof}

The conclusion is not true for $p = 2$ or $3$, see Example 4.2. For $p = 2$ or 3, in practice we can construct the covering of the skeletal category of $\mathcal{T}$ and use it to examine the representation type. For details, see \cite{Bongartz}.

\section{Koszul Properties of Skew Group Algebras}

In this section we study the Koszul properties of skew group algebras. Let $\Lambda = \bigoplus _{i \geqslant 0} \Lambda_i$ be a locally finite graded $k$-algebra generated in degrees 0 and 1, i.e., $\Lambda_i \Lambda_j = \Lambda_{i + j}$ and $\dim_k \Lambda_i < \infty$ for $i, j \geqslant 0$. Let $G$ be a finite group of grade-preserving algebra automorphisms. That is, for every $i \geqslant 0$ and $g \in G$, $g (\Lambda_i) = \Lambda_i$. Then the skew group algebra $\Lambda G$ is still a locally finite graded algebra generated in degrees 0 and 1 with $(\Lambda G)_0 = \Lambda_0 \otimes kG$. Since $(\Lambda G)_0$ in general is not semisimple, the classical Koszul theory describe in \cite{BGS, Mazorchuk} cannot apply, and we have to rely on generalized Koszul theories not requiring the semisimple property of $(\Lambda G)_0$, for example \cite{Green,Li3,Li4,Madsen,Woodcock}. In this paper we use the generalized Koszul theory developed in \cite{Li3,Li4}.

First we describe some definitions and results of this theory. For proofs and other details, please refer to \cite{Li3,Li4}. Let $A = \bigoplus _{i \geqslant 0} A_i$ be a locally finite graded algebra generated in degrees 0 and 1, and $M$ be a locally finite graded $A$-module generated in degree 0.

\begin{definition}
We call $M$ a generalized Koszul module if it has a linear projective resolution
\begin{equation*}
\xymatrix{ \ldots \ar[r] & P^{n} \ar[r] & \ldots \ar[r] & P^1 \ar[r] & P^0 \ar[r] & M }
\end{equation*}
such that each $P^i$ is generated in degree $i$. The algebra $A$ is said to be a generalized Koszul algebra if $A_0 \cong A /J$ is a generalized Koszul $A$-module, where $J = \bigoplus _{i \geqslant 1} A_i$.
\end{definition}

It is clear that from this definition $M$ is generalized Koszul if and only if for every $i \geqslant 0$, its $i$-th syzygy $\Omega ^i (M)$ is generated in degree $i$. For a generalized Koszul module $M$, since $\Omega ^{i+1} (M)$ is generated in degree $i+1$, $\Omega^i (M)_i \cong P^i_i$ is a projective $A_0$-module.

Note that $\Ext _A^{\ast} (M, A_0)$ as an $E = \Ext _A^{\ast} (A_0, A_0)$-module has a natural grading. If $A_0$ is semisimple, it is well known that $M$ is a Koszul $A$-module if and only if $\Ext _A^{\ast} (M, A_0)$ as an $E$-module is generated in degree 0. This is not true for general $A_0$. However, we have the following result:

\begin{theorem}
(Theorem 2.16, \cite{Li3}.) Let $A, M$ and $E$ as before. Then $M$ is a generalized Koszul $A$-module if and only if $\Ext _A^{\ast} (M, A_0)$ as an $E$-module is generated in degree 0, and $\Omega ^i (M)_i$ is a projective $A_0$-module for every $i \geqslant 0$.
\end{theorem}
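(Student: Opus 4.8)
The plan is to compute $\Ext_A^{\ast}(M, A_0)$ directly from a minimal graded projective resolution and then translate the Koszul condition into a statement about the ``tops'' of that resolution. Fix a minimal graded projective resolution $\cdots \to P^1 \to P^0 \to M \to 0$, minimal in the sense that each differential $d^n$ has image inside $J P^{n-1}$, where $J = \bigoplus_{i \geqslant 1} A_i$. Writing $T^n = P^n / J P^n$ for the graded $A_0$-module of generators, every $A$-homomorphism $P^{n-1} \to A_0$ factors through $T^{n-1}$ because $J$ annihilates $A_0$; since $\operatorname{im} d^n \subseteq J P^{n-1}$, the complex $\Hom_A(P^{\bullet}, A_0)$ has zero differentials. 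Hence $\Ext_A^n(M, A_0) \cong \Hom_{A_0}(T^n, A_0)$ naturally, with the $E$-module structure induced by Yoneda composition. Under this dictionary $M$ is generalized Koszul precisely when each $T^n$ is concentrated in internal degree $n$, while the degree-$n$ part of $T^n$ is isomorphic to $\Omega^n(M)_n$, so the requirement that these be projective over $A_0$ is exactly condition (b).

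For the forward implication, suppose $M$ is generalized Koszul. Then each $T^n$ sits in degree $n$, and the remark following the definition already gives that $\Omega^n(M)_n \cong P^n_n$ is $A_0$-projective, so (b) holds. For generation in degree $0$, I would realize the Yoneda action of $E$ on $\Ext_A^{\ast}(M, A_0)$ by chain maps from $P^{\bullet}$ into a minimal resolution of $A_0$; because all tops lie on the diagonal, a direct count shows the product map $E^n \otimes_{E^0} \Ext_A^0(M, A_0) \to \Ext_A^n(M, A_0)$ is surjective, i.e.\ $\Ext_A^{\ast}(M, A_0)$ is generated by $\Ext_A^0(M, A_0)$.

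The substance is the reverse implication, which I would prove by induction on $n$, showing that $T^n$ is concentrated in degree $n$. The base case is the hypothesis that $M$ is generated in degree $0$, and a general syzygy estimate gives that $T^n$ is always supported in degrees $\geqslant n$. Assuming $T^0, \dots, T^{n-1}$ are diagonal and, by (b), projective over $A_0$, I would split off the degree-$n$ part of $T^n$ using its $A_0$-projectivity and compare the two complexes: any generator of $T^n$ in degree strictly above $n$ would contribute an $\Ext^n$-class lying outside the image of $E^n \cdot \Ext_A^0(M, A_0)$, contradicting generation in degree $0$. Here condition (b) is used twice, to guarantee that the splittings exist and to lift the chain maps computing the Yoneda product.

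The main obstacle is exactly this inductive step in the reverse direction. When $A_0$ is not semisimple the tops $T^n$ need not be projective $A_0$-modules, and then neither the splitting of the diagonal part nor the lifting of the comparison chain maps is automatic; separating the ``linear'' part of each differential, which raises generator degree by exactly one, from the higher parts, and controlling the latter through $A_0$-projectivity, is where the generalized theory genuinely departs from the classical Koszul argument and where hypothesis (b) becomes indispensable.
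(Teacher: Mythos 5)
The first thing to say is that the paper contains no proof of this statement to compare yours against: it is imported verbatim from \cite{Li3} (Theorem 2.16 there), and the surrounding text explicitly defers all proofs to that reference. Judged on its own terms, your attempt has a gap at its foundation. Your entire dictionary $\Ext_A^n(M,A_0)\cong \Hom_{A_0}(T^n,A_0)$ depends on choosing a resolution with $\mathrm{im}\, d^n\subseteq JP^{n-1}$, so that the dual complex has zero differentials. When $A_0$ is not semisimple, the genuine minimal resolution only satisfies $\mathrm{im}\, d^n\subseteq \rad(A)P^{n-1}=\rad(A_0)P^{n-1}+JP^{n-1}$, and a homomorphism $P^{n-1}\to A_0$ annihilates $JP^{n-1}$ but not $\rad(A_0)P^{n-1}$ (since $\rad(A_0)$ acts nontrivially on $A_0$), so the differentials of $\Hom_A(P^{\bullet},A_0)$ do not vanish. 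A projective resolution with $\mathrm{im}\, d^n\subseteq JP^{n-1}$ exists if and only if every quotient $\Omega^n(M)/J\Omega^n(M)$ is a projective $A_0$-module, because that quotient must then be isomorphic to $T^n=P^n/JP^n$, which is automatically $A_0$-projective. In the forward implication this is harmless, since linearity gives $\Omega^n(M)/J\Omega^n(M)=\Omega^n(M)_n=P^n_n$. But in the reverse implication --- the direction you yourself identify as the substance --- hypothesis (b) only gives projectivity of $\Omega^n(M)_n$, and this coincides with $\Omega^n(M)/J\Omega^n(M)$ only once one knows $\Omega^n(M)$ is generated in degree $n$, which is the conclusion. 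Your scaffolding is therefore circular exactly where the theorem has content.

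Beyond that, the two remaining load-bearing steps are asserted rather than proved: the ``direct count'' giving surjectivity of $E^n\otimes_{E^0}\Ext_A^0(M,A_0)\to \Ext_A^n(M,A_0)$ in the forward direction, and the claim in the reverse induction that an off-diagonal generator of $T^n$ produces a class outside $E^n\cdot \Ext_A^0(M,A_0)$. The latter cannot be settled by internal-degree bookkeeping, because the theorem does not assume $A$ itself is generalized Koszul, so $E^n$ carries no a priori concentration in internal degree $n$. A correct argument has to run the induction through the exact sequences $0\to\Omega^{n+1}(M)\to P^n\to\Omega^n(M)\to 0$, reduce generation in degree $0$ to a statement about first syzygies, and use the $A_0$-projectivity in (b) to split off the degree-$(n+1)$ component of $\Omega^{n+1}(M)$; this is the content of the proof in \cite{Li3}. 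As written, your last paragraph correctly locates the difficulty but does not overcome it.
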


We call $M$ a \textit{quasi-Koszul} $A$-module if $\Ext^{\ast}_A (M, A_0)$ as an $E$-module is generated in degrees 0. Correspondingly, $A$ is a \textit{quasi-Koszul} algebra if $E$ as a graded algebra is generated in degrees 0 and 1.

This generalized Koszul theory and the classical theory are closely related by the following result. Let $\mathfrak{r} = \rad A_0$ and $\mathfrak{R} = A \mathfrak{r} A$ be the two-sided graded ideal. Define $\bar{A} = A/ \mathfrak{R}$ and $\bar{M} = M / \mathfrak{R} M$. It is clear that $\bar{A}$ is a graded algebra and $\bar{M}$ is a graded $\bar{A}$-module.

\begin{theorem}
(Theorem 3.12, \cite{Li4}.) Suppose that both $M$ and $A$ are projective $A_0$-modules, and that $\mathfrak{r} A_1 = A_1 \mathfrak{r}$. Then $M$ is a generalized Koszul $A$-module if and only if $\bar{M}$ is a classical $\bar{A}$-module. In particular, $A$ is a generalized Koszul algebra if and only if $\bar{A}$ is a classical Koszul algebra.
\end{theorem}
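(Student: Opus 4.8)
The plan is to study the reduction functor $\overline{(-)} = \bar A \otimes_A (-)$, which sends a graded $A$-module $N$ to $\bar N = N/\mathfrak R N$, and to prove that it carries the minimal graded projective resolution of $M$ over $A$ to the minimal graded projective resolution of $\bar M$ over $\bar A$ while preserving the degree in which each projective term is generated. Once this is established, linearity of one resolution is equivalent to linearity of the other, which is exactly the asserted equivalence. I would begin with the bookkeeping that makes $\bar A$ tractable: the relation $\mathfrak r A_1 = A_1 \mathfrak r$ propagates by induction to $\mathfrak r A_i = A_i \mathfrak r$ for all $i$, since $A$ is generated in degrees $0$ and $1$; whence $\mathfrak R = A \mathfrak r A = \mathfrak r A = A \mathfrak r$ and $\mathfrak R_n = \mathfrak r A_n$. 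Two consequences are immediate and crucial: $\bar A_0 = A_0/\mathfrak r$ is semisimple, so the classical Koszul theory of \cite{BGS} genuinely applies to $\bar A$ and makes the statement meaningful; and $\mathfrak r \bar A = 0$, so that $\rad \bar A = \bar A_{\geqslant 1}$.

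The engine of the argument, and the step I expect to be the main obstacle, is showing that $\overline{(-)}$ is exact along the resolution of $M$, i.e.\ that $\Ext$-free base change holds in the form $\mathrm{Tor}^A_i(\bar A, M) = 0$ for all $i \geqslant 1$. Here both hypotheses are indispensable. First, the $A_0$-projectivity of $M$ and of $A$ forces every syzygy to be $A_0$-projective: since $A$ is $A_0$-projective each term $P^i$ of a projective $A$-resolution is $A_0$-projective, so the short exact sequence $0 \to \Omega^{i+1}(M) \to P^i \to \Omega^i(M) \to 0$ splits over $A_0$ whenever $\Omega^i(M)$ is $A_0$-projective; starting from $\Omega^0(M) = M$ and inducting, all $\Omega^i(M)$ are $A_0$-projective, hence $A_0$-flat. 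Second, using $\mathfrak R = \mathfrak r A$ together with the $A_0$-flatness of $A$, one identifies $\mathfrak R \otimes_A N \cong \mathfrak r \otimes_{A_0} N$ compatibly with the multiplication map to $N$; from the long exact Tor sequence attached to $0 \to \mathfrak R \to A \to \bar A \to 0$ one then gets $\mathrm{Tor}^A_1(\bar A, N) = \ker(\mathfrak r \otimes_{A_0} N \to N)$, which vanishes precisely because the flat $A_0$-module $N = \Omega^i(M)$ makes $\mathfrak r \otimes_{A_0} N \hookrightarrow N$ injective. Dimension shifting then yields $\mathrm{Tor}^A_i(\bar A, M) = 0$ for all $i \geqslant 1$, so reducing any projective resolution of $M$ produces a projective resolution of $\bar M$.

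It remains to check that reduction preserves minimality and generation degrees, after which both directions follow at once. Because $\mathfrak r \bar A = 0$, one has $(\rad A)\bar P = \bar A_{\geqslant 1}\bar P = (\rad\bar A)\bar P$ for any projective $\bar P$, so a map of projectives is a projective cover over $A$ if and only if its reduction is one over $\bar A$; hence the reduction of the minimal $A$-resolution $P^\bullet \to M$ is the minimal $\bar A$-resolution of $\bar M$, and $\overline{\Omega^i(M)} \cong \Omega^i(\bar M)$. Comparing tops, $\Omega^i(M)/(\rad A)\Omega^i(M) \cong \Omega^i(\bar M)/(\rad\bar A)\Omega^i(\bar M)$, so $P^i$ is generated in degree $i$ if and only if $\bar P^i$ is. Consequently $M$ admits a linear $A$-resolution if and only if $\bar M$ admits a linear $\bar A$-resolution, i.e.\ $M$ is generalized Koszul over $A$ if and only if $\bar M$ is classical Koszul over $\bar A$. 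Finally, taking $M = A_0 = A/J$ gives $\bar M = \bar A/\bar J = \bar A_0$, and the ``in particular'' statement about algebras follows immediately from Definition~5.1.
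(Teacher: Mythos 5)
The paper does not actually prove this statement: it is imported verbatim from \cite{Li4} (Theorem 3.12 there), with the reader referred to that paper for the proof. So there is no in-paper argument to compare against; your proposal has to stand on its own, and it does. The strategy --- show that $\overline{(-)} = \bar A \otimes_A (-)$ carries the minimal graded projective resolution of $M$ to the minimal graded projective resolution of $\bar M$ while preserving the tops of the projective terms --- is the natural route and is essentially the one taken in \cite{Li4}. The three pillars are all correctly in place: (i) $\mathfrak{r}A_1 = A_1\mathfrak{r}$ propagates to $\mathfrak{R} = \mathfrak{r}A = A\mathfrak{r}$, which both makes $\bar A_0$ semisimple (so the classical theory applies) and gives the identification $\mathfrak{R}\otimes_A N \cong \mathfrak{r}\otimes_{A_0}N$; (ii) $A_0$-projectivity of $A$ and $M$ forces all syzygies to be $A_0$-projective, hence $A_0$-flat, which kills $\mathrm{Tor}^A_{\geqslant 1}(\bar A, M)$ by dimension shifting and makes the reduced complex exact; (iii) $(\rad A)P$ and $\mathfrak{R}P + A_{\geqslant 1}P$ have the same image in $\bar P$, so minimality and generation degrees pass back and forth. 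This is exactly where both hypotheses get used, and you identify that correctly.

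Two small points worth making explicit if you write this up. First, ``projective $A_0$-module'' must be read on the correct side: your argument uses $A$ projective (hence flat) as a \emph{left} $A_0$-module both for the syzygy induction and for the injectivity of $\mathfrak{r}\otimes_{A_0}A \to A$; it is worth stating that this is the sense in which the hypothesis is used. Second, since $A$ may be infinite-dimensional, ``$\rad$'' throughout should be the graded radical $\mathfrak{r}\oplus A_{\geqslant 1}$ (and $\bar A_{\geqslant 1}$ for $\bar A$), with minimality of resolutions taken in the graded sense; this is the convention of \cite{Li3,Li4} but deserves a sentence. Neither point is a gap.
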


The graded algebra $A$ is said to have the splitting property $(S)$ if every exact sequence $0 \rightarrow P \rightarrow Q \rightarrow R \rightarrow 0$ of left (resp., right) $A_0$-modules splits whenever $P$ and $Q$ are left (resp., right) projective $\Lambda_0$-modules. If $A_0$ is self-injective or is a direct sum of local algebras, the splitting property is satisfied.

The classical Koszul duality can be generalized as follows:

\begin{theorem}
(Theorem 4.1, \cite{Li3}.) Suppose that $A$ has the splitting property. If $A$ is a generalized Koszul algebra, then $F = \Ext ^{\ast}_A (-, A_0)$ gives a duality between the category of generalized Koszul $A$-modules and the category of generalized Koszul $E$-modules. That is, if $M$ is a generalized Koszul $A$-module, then $F(M)$ is a generalized Koszul $E$-module, and $F_E (F(M)) = \Ext ^{\ast} _E (F(M), E_0) \cong M$ as graded $A$-modules.
\end{theorem}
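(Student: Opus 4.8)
The plan is to follow the template of the classical Koszul duality argument, substituting the splitting property (S) wherever the classical proof would invoke semisimplicity of $A_0$. To begin, I would fix a minimal linear projective resolution
\begin{equation*}
\cdots \longrightarrow P^n \longrightarrow \cdots \longrightarrow P^1 \longrightarrow P^0 \longrightarrow M \longrightarrow 0
\end{equation*}
of the generalized Koszul module $M$. Generalized Koszulity means each syzygy $\Omega^i(M)$ is generated in degree $i$ with $W_i := \Omega^i(M)_i$ a projective $A_0$-module; since $A$ is generated in degrees $0$ and $1$, the projective cover is then $P^i \cong A \otimes_{A_0} W_i$. Applying $F = \Hom_A(-, A_0)$ and using minimality, so that every induced differential vanishes, identifies the graded pieces $F(M)^i = \Ext_A^i(M, A_0) \cong \Hom_{A_0}(W_i, A_0)$. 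Since $E_0 = \Hom_A(A_0,A_0) \cong (A_0)^{\mathrm{op}}$ and $\Hom_{A_0}(-, A_0)$ is a duality between finitely generated projective left and right $A_0$-modules, each $F(M)^i$ is a projective $E_0$-module, and the Yoneda product makes $F(M) = \bigoplus_i F(M)^i$ a graded $E$-module.

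Next I would show $F(M)$ is a generalized Koszul $E$-module by verifying the two conditions of Theorem~5.2 for $E$. That $F(M)$ is generated in degree $0$ over $E$ is exactly the quasi-Koszul half of Theorem~5.2 applied to $M$, so this comes for free. For the condition that each $E$-syzygy satisfies $\Omega_E^i(F(M))_i$ projective over $E_0$, I would first record that $A$ being a generalized Koszul algebra forces $E_i = \Hom_{A_0}(\Omega^i(A_0)_i, A_0)$ to be projective over $E_0$ for every $i$; consequently every term of the minimal $E$-resolution of $F(M)$ is projective in each graded degree. An induction that splits, using (S), the short exact sequences of $E_0$-modules occurring in the relevant graded degree then propagates projectivity to every syzygy generator. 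With both conditions in hand, Theorem~5.2 certifies that $F(M)$ is generalized Koszul over $E$, so $F$ carries generalized Koszul $A$-modules to generalized Koszul $E$-modules.

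For the double duality I would first invoke the algebra-level statement $\Ext_E^*(E_0, E_0) \cong A$ as graded algebras, so that $F_E = \Ext_E^*(-, E_0)$ genuinely produces graded $A$-modules. Then I would compute $F_E(F(M))$ from the minimal linear $E$-resolution of $F(M)$: in homological degree $j$ its generating space is $\Hom_{E_0}\!\bigl(\Hom_{A_0}(W_j, A_0), E_0\bigr)$, which is canonically $W_j$ again because the double dual of a finitely generated projective $A_0$-module is itself. Reassembling these pieces, and checking that the $A = \Ext_E^*(E_0, E_0)$-action recovered from the differentials of the $E$-resolution matches the original multiplication on $M$, yields the graded isomorphism $F_E(F(M)) \cong M$. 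By the symmetry of the whole construction the same argument applied to a generalized Koszul $E$-module gives $F \circ F_E \cong \mathrm{id}$, so $F$ and $F_E$ are mutually inverse dualities between the two categories.

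The step I expect to be the main obstacle is the $E_0$-projectivity of the higher syzygy generators $\Omega_E^i(F(M))_i$ and, closely related, the faithful recovery of the $A$-module structure in the double dual. In the classical case $E_0$ is semisimple, every module is projective, and these points are automatic; without semisimplicity one must show that each passage to a syzygy and each dualization preserves projectivity over the relevant degree-zero ring, which is precisely the service performed by the splitting property. The most delicate bookkeeping is verifying that the reconstructed $A$-action on $F_E(F(M))$ agrees with the given one, rather than merely matching the underlying graded $A_0$-modules; this requires tracing the Yoneda pairings through both dualizations, and a careful comparison of the two minimal resolutions is indispensable there.
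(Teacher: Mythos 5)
There is nothing in this paper to compare your argument against: the statement is Theorem 4.1 of \cite{Li3}, imported verbatim, and the paper explicitly defers all proofs of the background Koszul-theoretic results to \cite{Li3,Li4}. So the only question is whether your blind attempt is a viable proof in its own right.

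Your architecture is the right one, and you correctly locate where the splitting property $(S)$ must replace semisimplicity of $A_0$. The computation $\Ext_A^i(M,A_0)\cong \Hom_{A_0}(W_i,A_0)$ from a minimal linear resolution is sound (the differentials die because the image of each $d$ lies in $JP^i$ and $JA_0=0$), as is the observation that $E_0\cong A_0^{\mathrm{op}}$ and that duals of finitely generated projective $A_0$-modules are projective $E_0$-modules. However, as written this is a plan rather than a proof: the two load-bearing steps are asserted, not carried out. First, the induction that ``propagates projectivity to every syzygy generator'' of $F(M)$ over $E$ is exactly the hard content of the theorem; you need to exhibit the short exact sequences of $E_0$-modules to which $(S)$ is applied and check that their outer terms are in fact projective, which requires an explicit comparison of the minimal $E$-resolution of $F(M)$ with the $E$-resolution of $E_0$ (equivalently, a degree-by-degree analysis of the Yoneda action of $E_1$ on $F(M)$). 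Second, you invoke $\Ext_E^*(E_0,E_0)\cong A$ as a black box; that isomorphism is itself a nontrivial theorem in this generality and again needs $(S)$, and without it the double-dual step has no content. Finally, to get a genuine duality (your last sentence, ``by symmetry'') you must also know that $E$ is a generalized Koszul algebra whose degree-zero part $E_0\cong A_0^{\mathrm{op}}$ again satisfies the splitting property; neither point is addressed. None of these gaps indicates a wrong approach --- they are the points where the actual proof in \cite{Li3} does its work --- but they mean the proposal does not yet constitute a proof.
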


Now we consider the generalized Koszul properties of skew group algebra $\Lambda G$. Firstly, we observe that for a graded $\Lambda G$-module $M$ and for every $i \geqslant 0$, $M_i$ is not only a $\Lambda_0$-module, but a $kG$-module.

\begin{lemma}
Let $M$ be a graded $\Lambda G$-module whose restricted module $_{\Lambda} M$ is a generalized Koszul $\Lambda$-module, and let
\begin{equation*}
\xymatrix{ \ldots \ar[r] & P^{n} \ar[r] ^{d_n} & \ldots \ar[r] & P^1 \ar[r] ^{d_1} & P^0 \ar[r] ^{d_0} & _{\Lambda} M}
\end{equation*}
be a minimal graded projective resolution. Then every $P^i$ is a finitely generated $\Lambda G$-module.
\end{lemma}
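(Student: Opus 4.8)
The plan is to upgrade the given minimal $\Lambda$-resolution of ${}_\Lambda M$ to a complex of $\Lambda G$-modules by transporting the $G$-action on $M$ up the resolution, and then to observe that finite generation over $\Lambda G$ is automatic. Since $\Lambda \subseteq \Lambda G$ and each $P^i$ will already be finitely generated as a $\Lambda$-module, the only real content is equipping each $P^i$ with a compatible $\Lambda G$-module structure.

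First I would set up the twisting formalism. For $g \in G$, let ${}^g(-)$ denote the functor on locally finite graded $\Lambda$-modules sending $N$ to the module with the same underlying graded space and action $\lambda \cdot n = {}^{g^{-1}}\!(\lambda)\, n$; this is a grade-preserving auto-equivalence because $g$ preserves the grading of $\Lambda$. It is exact, carries projectives to projectives and preserves the graded radical, so it sends a minimal resolution to a minimal resolution and a module generated in degree $i$ to one generated in degree $i$. The $\Lambda G$-structure on $M$ supplies, for each $g$, a degree-preserving $\Lambda$-isomorphism $\rho_g \colon {}^gM \to M$, $m \mapsto g(m)$, since the semilinearity $g(\lambda m) = {}^g\lambda\, g(m)$ is exactly $\Lambda$-linearity after the twist.

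The heart of the argument is a uniqueness-of-lifts statement special to linear resolutions. Applying ${}^g(-)$ to $P^\bullet \to {}_\Lambda M$ and composing the augmentation with $\rho_g$ yields a second minimal projective resolution ${}^gP^\bullet \to M$; by uniqueness of minimal resolutions it is isomorphic to $P^\bullet \to M$ compatibly with the augmentations. I claim the lifting chain isomorphism is unique. Indeed, since ${}_\Lambda M$ is generalized Koszul each $P^i$ is generated in degree $i$, so $(P^{i+1})_i = 0$; any degree-$0$ homotopy $h_i \colon {}^gP^i \to P^{i+1}$ between two such lifts must carry the degree-$i$ generators of ${}^gP^i$ into $(P^{i+1})_i = 0$, hence $h_i = 0$ and the two lifts coincide. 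Writing the unique lift in homological degree $i$ as $g \bullet (-) \colon P^i \to P^i$, uniqueness forces $1_G \bullet (-) = \mathrm{id}$ and $(gh)\bullet(-) = g \bullet (h \bullet (-))$, because both the right- and left-hand sides lift the isomorphism $\rho_{gh}$. Thus $\{g\bullet(-)\}_{g\in G}$ is a strict semilinear $G$-action, making every $P^i$ a graded $\Lambda G$-module and each differential $d_i$ (and the augmentation $d_0$) a $\Lambda G$-homomorphism.

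Finite generation is then immediate. Because ${}_\Lambda M$ is generalized Koszul and $\Lambda$ is locally finite, $\Omega^i({}_\Lambda M)_i$ is a finite-dimensional projective $\Lambda_0$-module and $P^i$ is generated over $\Lambda$ by this finite-dimensional degree-$i$ part; a fortiori it is finitely generated over $\Lambda G \supseteq \Lambda$. I expect the one delicate point to be the uniqueness of lifts: it is precisely the linearity of the resolution (each $P^i$ generated in its own degree $i$) that kills the homotopy ambiguity, and this is essential, since $|G|$ need not be invertible in $k$ and so a nontrivial cocycle could not be repaired by averaging.
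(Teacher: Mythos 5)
Your proof is correct, but it takes a genuinely different route from the paper's. The paper argues constructively and inductively: since the resolution is linear, $P^0 \cong \Lambda \otimes_{\Lambda_0} M_0$, and the diagonal action $g(\lambda \otimes v) = g(\lambda)\otimes g(v)$ makes this a $\Lambda G$-module; the degreewise short exact sequences $0 \to K^1_j \to P^0_j \to M_j \to 0$ then show each graded piece of the first syzygy is a $kG$-module, so $P^1 \cong \Lambda \otimes_{\Lambda_0} K^1_1$ is again a $\Lambda G$-module with the diagonal action, and one inducts. You instead twist by $g$, invoke the comparison theorem for (minimal, graded) projective resolutions to lift $\rho_g$, and use linearity to kill the degree-zero homotopies, so that the lifts are unique and hence assemble into a strict $G$-action without any cocycle correction. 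Both arguments use the linearity of the resolution at the crucial step, but in different ways: the paper uses it to write $P^i$ as a module induced from its degree-$i$ part (which is in itself a useful structural fact, exhibiting $P^i$ as $\Lambda\otimes_{\Lambda_0}(\text{a }\Lambda_0$-$G$-module$)$), while you use it to eliminate the homotopy ambiguity in the lifted action. Your version is more functorial and makes explicit why no averaging over $G$ is needed (which would fail in the modular case); it also immediately yields that the whole resolution, differentials included, is a complex of $\Lambda G$-modules, a fact the paper's construction gives as well but leaves more implicit. The finite-generation step is identical in both: each $P^i$ is locally finite and generated in the single degree $i$, hence finitely generated over $\Lambda$ and a fortiori over $\Lambda G$.
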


\begin{proof}
Let $K^{i+1}$ be the kernel of $d_i$ for $i \geqslant 0$. Since $_{\Lambda} M$ is generalized Koszul, $P^i$ is generated in degree $i$. In particular, $P^0_0 \cong M_0$ is a $kG$-module. The action of $g \in G$ on $\Lambda \otimes _{\Lambda_0} M_0$ is defined by $g (\lambda \otimes v) = g(\lambda) \otimes g(v)$. Therefore, $P^0 \cong \Lambda \otimes _{\Lambda_0} P_0^0$ is a $kG$-module, and hence a $\Lambda G$-module.

The above resolution induces short exact sequences
\begin{equation*}
\xymatrix{0 \ar[r] & K_j^1 \ar[r] & P^0_j \ar[r] & M_j \ar[r] & 0}
\end{equation*}
for $j \geqslant 0$. Thus each $K_j^1$ is also a $kG$-module, in particular so is $K^1_1$. Therefore, $P^1 \cong \Lambda \otimes _{\Lambda_0} K^1_1$ is a $\Lambda G$-module. By induction, every $P^i$ is a $\Lambda G$-module.

Note that the category of locally finite $\Lambda$-modules is invariant under taking syzygies. Therefore, each $P^i$ is locally finite. Since it is generated in degree $i$ as a $\Lambda$-module, it is a finitely generated $\Lambda$-module. Clearly, viewed as a $\Lambda G$-module, it is also finitely generated.
\end{proof}

We state the following easy fact as a lemma.

\begin{lemma}
Let $P$ be a locally finite graded $\Lambda G$-module. If viewed as an $\Lambda$-module it is graded projective and generated in degree $i$, then $P \otimes_k kG$ is a graded projective $\Lambda G$-module generated in degree $i$.
\end{lemma}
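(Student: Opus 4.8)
The plan is to identify $P \otimes_k kG$ with the induced module $(_{\Lambda} P)\uparrow_1^G = \Lambda G \otimes_{\Lambda} (_{\Lambda} P)$ and then invoke the fact that induction from $\Lambda$ to $\Lambda G$ preserves graded projectivity and the degree of generation. Here $P \otimes_k kG$ carries the diagonal $\Lambda G$-action $(\lambda g) \cdot (p \otimes h) = \lambda \cdot {}^g p \otimes gh$, where $g$ acts on $p \in P$ through the given $\Lambda G$-module structure. This is the skew-group-algebra analogue of the tensor identity, in the special case of the trivial subgroup, and once it is in place the lemma becomes formal.

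First I would construct the comparison map. I set
\[
\Phi \colon \Lambda G \otimes_{\Lambda} (_{\Lambda} P) \longrightarrow P \otimes_k kG, \qquad \Phi\big((\mu h) \otimes p\big) = \mu \cdot {}^h p \otimes h.
\]
Using $(\mu h)\nu = \mu\, {}^h\nu\, h$ and ${}^h(\nu p) = {}^h\nu \cdot {}^h p$ one checks that $\Phi$ is well defined over $\otimes_{\Lambda}$, and a direct comparison of $(\lambda g) \cdot \big((\mu h) \otimes p\big) = (\lambda\, {}^g\mu\, gh) \otimes p$ with the diagonal action on the target shows that $\Phi$ is $\Lambda G$-linear. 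Since $h$ has degree $0$ and preserves the grading of $P$, the map $\Phi$ is homogeneous of degree $0$. Bijectivity is cheapest to see after restriction: the $\Lambda$-action touches only the $P$-factor, so $_{\Lambda}(P \otimes_k kG) \cong (_{\Lambda} P)^{|G|}$, and likewise $_{\Lambda}(\Lambda G \otimes_{\Lambda} P) \cong (_{\Lambda} P)^{|G|}$ because $\Lambda G$ is graded-free of rank $|G|$ as a right $\Lambda$-module via $\lambda g \mapsto {}^{g^{-1}}\lambda$. As these graded modules are locally finite with equal graded dimensions and $\Phi$ is visibly surjective (the element $(1\,h) \otimes {}^{h^{-1}}p$ maps to $p \otimes h$), it follows that $\Phi$ is an isomorphism of graded $\Lambda G$-modules.

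With the identification $P \otimes_k kG \cong (_{\Lambda} P)\uparrow_1^G$ in hand, both assertions are immediate. Because $\Lambda G$ is graded-free as a right $\Lambda$-module, the induction functor $\Lambda G \otimes_{\Lambda} -$ carries a graded free $\Lambda$-module to a graded free $\Lambda G$-module, hence a graded projective to a graded projective; as $_{\Lambda} P$ is graded projective, so is $P \otimes_k kG$. For the degree of generation, $_{\Lambda} P = \Lambda \cdot P_i$ gives $\Lambda G \otimes_{\Lambda} P = \Lambda G \cdot (1 \otimes P_i)$, and since $1 \otimes P_i$ sits in degree $i$, the module $P \otimes_k kG$ is generated in degree $i$.

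The only point requiring care is the verification that $\Phi$ is well defined and $\Lambda G$-linear, that is, that the diagonal action genuinely agrees with the induced-module action; I expect this to be the main obstacle, since it is precisely where the skew-multiplication convention $(\lambda g)(\mu h) = \lambda\, {}^g\mu\, gh$ must be tracked carefully through every computation. Everything else is formal once this identification is established, which is why the statement is recorded as an easy fact.
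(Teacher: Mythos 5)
Your proof is correct and follows essentially the same route as the paper: the paper's two-line argument asserts directly that $P$ being a graded $\Lambda$-summand of $\Lambda^n$ makes $P \otimes_k kG$ a graded $\Lambda G$-summand of $\Lambda^n \otimes_k kG \cong (\Lambda G)^n$, which is exactly the tensor identity $P \otimes_k kG \cong \Lambda G \otimes_{\Lambda} ({}_{\Lambda}P)$ that you verify explicitly. Your version has the added merit of making precise why a $\Lambda$-module splitting of $P$ (which need not be $G$-stable) still yields a $\Lambda G$-module splitting of $P \otimes_k kG$, a point the paper leaves implicit.
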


\begin{proof}
Clearly $P \otimes_k kG$ is a graded $\Lambda$-module generated in degree $i$, on which $\Lambda$ acts on the left side and $G$ acts diagonally. Since $P$ is isomorphic to a summand of $\Lambda ^n$ for some $n \geqslant 0$, $P \otimes _k kG$ is isomorphic to a summand of $\Lambda ^n \otimes_k kG \cong (\Lambda G) ^n$, so is a projective $\Lambda G$-module.
\end{proof}

We show that $\Lambda$ and $\Lambda G$ share the common property of being generalized Koszul algebras. This generalizes a result of Martinez in \cite{Martinez}, where both $\Lambda_0$ and $kG$ are supposed to be semisimple. \footnote{In a note sent to the author, applying the Koszul complex Witherspoon proved that if $\Lambda$ is a Koszul algebra and $\Lambda_0 \cong k$, then $\Lambda G$ is a generalized Koszul over $kG$. Her proof actually works for the case that $\Lambda_0$ is self-injective since under this condition the generalized Koszul complex is defined. For details, see \cite{Li3}.}

\begin{theorem}
Let $M$ be a graded $\Lambda G$-module. Then the $\Lambda G$-module $M \otimes_k kG$ is a generalized Koszul $\Lambda G$-module if and only if viewed as a $\Lambda$-module $M$ is generalized Koszul. In particular, $\Lambda G$ is a generalized Koszul algebra over $(\Lambda G)_0 = \Lambda_0 \otimes_k kG$ if and only if $\Lambda$ is a generalized Koszul algebra over $\Lambda_0$.
\end{theorem}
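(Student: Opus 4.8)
The plan is to establish the main equivalence first and then deduce the ``in particular'' clause by specializing $M$. For the equivalence I would treat the two implications separately, in both directions transporting a linear resolution between $\Lambda$ and $\Lambda G$ by means of the functor $-\otimes_k kG$ (studied in Lemma 5.6) and of restriction ${}_\Lambda(-)$, which, as recorded in Section 2, is exact and carries $\Lambda G$-projectives to $\Lambda$-projectives because $\Lambda G$ is free over $\Lambda$. Throughout I would fix the module structure on $M\otimes_k kG$ supplied by Lemma 5.6, in which $\Lambda$ acts on the left tensorand and $G$ acts diagonally; consequently ${}_\Lambda(M\otimes_k kG)\cong ({}_\Lambda M)^{|G|}$ as graded $\Lambda$-modules, a fact I will use in the converse.

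For the direction asserting that $M$ generalized Koszul over $\Lambda$ forces $M\otimes_k kG$ to be generalized Koszul over $\Lambda G$, I would start from a minimal graded $\Lambda$-projective resolution $\cdots\to P^1\to P^0\to {}_\Lambda M$. Since ${}_\Lambda M$ is generalized Koszul, every $P^i$ is generated in degree $i$, and by Lemma 5.5 each $P^i$ in fact carries a compatible $\Lambda G$-module structure with respect to which the augmentation and the differentials become $\Lambda G$-linear. Applying $-\otimes_k kG$ termwise then yields a complex of $\Lambda G$-modules; by Lemma 5.6 each $P^i\otimes_k kG$ is a graded projective $\Lambda G$-module still generated in degree $i$, and since $-\otimes_k kG$ is exact (being $(-)^{|G|}$ on underlying spaces) the complex stays exact with augmentation $M\otimes_k kG$. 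This is a linear $\Lambda G$-projective resolution, so $M\otimes_k kG$ is generalized Koszul by Definition 5.1.

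For the converse, suppose $M\otimes_k kG$ is a generalized Koszul $\Lambda G$-module and choose a linear $\Lambda G$-projective resolution $Q^\bullet\to M\otimes_k kG$ with each $Q^i$ generated in degree $i$. Restricting to $\Lambda$ keeps the complex exact, preserves the grading and hence the degree of generation, and turns each $Q^i$ into a projective $\Lambda$-module; thus ${}_\Lambda(M\otimes_k kG)$ admits a linear $\Lambda$-projective resolution and is generalized Koszul over $\Lambda$. Using ${}_\Lambda(M\otimes_k kG)\cong ({}_\Lambda M)^{|G|}$ together with the additivity of syzygies over direct sums and the syzygy characterization of generalized Koszulity recalled after Definition 5.1, I conclude that ${}_\Lambda M$ itself is generalized Koszul. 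The ``in particular'' statement then follows by taking $M=\Lambda_0$, which is a $\Lambda G$-module precisely because $G$ preserves the grading: here $M\otimes_k kG=(\Lambda G)_0$, and unwinding the definition of a generalized Koszul algebra on each side gives the stated equivalence.

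I expect the only delicate point to be the bookkeeping of module structures: verifying that the differentials of the minimal $\Lambda$-resolution are genuinely $\Lambda G$-equivariant, so that tensoring with $kG$ produces $\Lambda G$-homomorphisms and the augmentation is the prescribed $\Lambda G$-module $M\otimes_k kG$, and dually that restriction identifies ${}_\Lambda(M\otimes_k kG)$ with $({}_\Lambda M)^{|G|}$. Both points are essentially provided by the constructions in Lemmas 5.5 and 5.6; once the structures are matched, exactness and preservation of the generation degree make the two implications routine. This is exactly where the generalized framework replaces Martinez's semisimplicity hypothesis, since no semisimplicity of $\Lambda_0$ or $kG$ is needed at any step.
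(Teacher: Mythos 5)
Your proposal is correct and follows essentially the same route as the paper: transport a linear resolution forward via $-\otimes_k kG$ using Lemmas 5.5 and 5.6, and backward via restriction together with ${}_{\Lambda}(M\otimes_k kG)\cong({}_{\Lambda}M)^{|G|}$, then specialize to $M=\Lambda_0$. The ``delicate point'' you flag about $\Lambda G$-equivariance of the differentials is exactly what Lemma 5.5 supplies, so nothing is missing.
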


\begin{proof}
Since $M$ is a generalized Koszul $\Lambda$-module, it has a minimal linear projective resolution
\begin{equation*}
\xymatrix{ \ldots \ar[r] & P^{n} \ar[r] & \ldots \ar[r] & P^1 \ar[r] & P^0 \ar[r] & M},
\end{equation*}
where each $P_i$ is a graded projective $\Lambda$-module generated in degree $i$.

Applying $- \otimes_k kG$, we get an exact sequence
\begin{equation*}
\xymatrix{ \ldots \ar[r] & P^n \otimes_k kG \ar[r] & \ldots \ar[r] & P^0 \otimes_k kG \ar[r] & M \otimes_k kG \ar[r] & 0}.
\end{equation*}
By the previous lemmas, each $P^i \otimes_k kG$ is actually a graded projective $\Lambda G$-module generated in degree $i$. Consequently, this sequence gives rise to linear projective resolution of $M \otimes_k kG$. By definition, $M \otimes_k kG$ is a generalized Koszul $\Lambda G$-module.

Conversely, suppose that $M \otimes_k kG$ is a generalized Koszul $\Lambda G$-module. It has a linear projective resolution
\begin{equation*}
\xymatrix{ \ldots \ar[r] & \tilde{P}^{n} \ar[r] & \ldots \ar[r] & \tilde{P}^1 \ar[r] & \tilde{P}^0 \ar[r] & M \otimes_k kG}.
\end{equation*}
Viewed as $\Lambda$-modules, we get a resolution
\begin{equation*}
\xymatrix{ \ldots \ar[r] & _{\Lambda} \tilde{P}^{n} \ar[r] & \ldots \ar[r] & _{\Lambda} \tilde{P}^1 \ar[r] & _{\Lambda} \tilde{P}^0 \ar[r] & _{\Lambda} (M \otimes _k kG) \ar[r] \ar[r] & 0}.
\end{equation*}
Clearly, every $_{\Lambda} \tilde{P}^i$ is a projective $\Lambda$-module generated in degree $i$. Moreover, $_{\Lambda} (M \otimes_k kG) \cong (_{\Lambda} M) ^{|G|}$. Therefore, $_{\Lambda} M$ has a linear projective resolution, so is a generalized Koszul $\Lambda$-module.

In particular, take $M = \Lambda_0$, which is a $\Lambda G$-module, we deduce that $\Lambda_0 \otimes_k kG$ is a generalized Koszul $\Lambda G$-module if and only if $\Lambda_0$ is a generalized Koszul $\Lambda$-module. That is, $\Lambda G$ is a generalized Koszul algebra if and only if so is $\Lambda$.
\end{proof}

Our next goal is to show that the extension algebra $\Ext _{\Lambda G} ^{\ast} ((\Lambda G)_0, (\Lambda G)_0)$ is a skew group algebra of $\Ext _{\Lambda} ^{\ast} (\Lambda_0, \Lambda_0)$. This result has been proved by Martinez in \cite{Martinez} under the assumption that both $\Lambda_0$ and $kG$ are semismple. With a careful check, we find that this result still holds for the general situation. In the rest of this section we state some main results, and briefly explain why these results still hold in our framework. We claim no originality for these results since his arguments actually work with little modifications. The reader is suggested to refer to \cite{Martinez} for details.

Let $M$ and $N$ be two finitely generated $\Lambda G$-modules. Then $\Hom _{\Lambda} (M, N)$ has a natural $\Lambda G$-module structure by letting $(g \ast f) (v) = gf(g^{-1} v)$ for every $g \in G$ and $v \in M$. It is also well known that $\Hom _{\Lambda G} (M, N) = \Hom _{\Lambda} (M, N)^G$, the set of all $\Lambda$-module homomorphisms fixed by the action of every $g \in G$.

For a fixed $g \in G$, we define a functor $F_g: \Lambda \text{-mod} \rightarrow \Lambda \text{-mod}$ as follows. For $X \in \Lambda$-mod, $F_g (X) = X^g = X$ as vector spaces. For every $\lambda \in \Lambda$ and $v \in X^g$, the action of $\lambda$ on $v \in X^g$ is defined by $\lambda \ast v = g^{-1} (\lambda) v$. For $f \in \Hom _{\Lambda} (X, Y)$ where $Y \in \Lambda$-mod, $F_g (f) = f^g$, and $f^g(v) = f(v)$ for every $v \in X^g$. To simplify the notation, we identify $f^g$ and $f$ since they are actually the same map.

If $X$ is also a $k G$-module, then $X \cong X^g$ as $\Lambda$-modules. Indeed, we can find a $\Lambda$-module isomorphism $\varphi_g: X \rightarrow X^g$ by letting $\varphi_g (v) = g^{-1} (v)$. This is clearly a vector space isomorphism. For every $\lambda \in \Lambda$,
\begin{equation*}
\varphi_g (\lambda v) = g^{-1} (\lambda v) = g^{-1} (\lambda) g^{-1} (v) = \lambda \ast \varphi_g (v),
\end{equation*}
so it is also a $\Lambda$-module homomorphism.

If both $X$ and $Y$ are $kG$-modules, applying the functor $F_g$ to a $\Lambda$-module homomorphism $f: X \rightarrow Y$, we get the following commutative diagram:
\begin{equation*}
\xymatrix{X \ar[rr] ^{g \ast f} \ar[d] ^{\varphi_g} & & Y \ar[d] ^{\varphi_g} \\
X^g \ar[rr] ^{f^g = f} & & Y^g.}
\end{equation*}
Indeed, for every $v \in X$, we have $f (\varphi_g (v)) = f (g^{-1} (v))$, and
\begin{equation*}
\varphi_g ((g \ast f) (v)) = \varphi_g ( g f (g^{-1} v)) = g^{-1} g f(g^{-1} v) = f (g^{-1} v).
\end{equation*}

\begin{lemma}
Let $M, N$ and $L$ be finitely generated $\Lambda G$-modules. Then there is a natural action of $G$ on $\Ext ^n _{\Lambda} (M, N)$ such that for $x \in \Ext _{\Lambda}^s (M, N)$, $y \in \Ext _{\Lambda}^t (N, L)$, and $g \in G$, $g(y \cdot x) = g(y) \cdot g(x)$, where $\cdot$ is the Yoneda product.
\end{lemma}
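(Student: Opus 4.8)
The plan is to realise the $G$-action and the Yoneda product simultaneously at the level of a single cochain complex, so that the compatibility becomes a cocycle-level computation. The starting point is the observation that if $P_\bullet \to M$ is a projective resolution of $M$ over the skew group algebra $\Lambda G$, then its restriction $_{\Lambda} P_\bullet \to {}_{\Lambda} M$ is a projective resolution over $\Lambda$: indeed $_{\Lambda} \Lambda G \cong (_{\Lambda} \Lambda)^{|G|}$ is free, so projective $\Lambda G$-modules restrict to projective $\Lambda$-modules. Hence $\Ext^n_{\Lambda}(M,N) \cong H^n(\Hom_{\Lambda}(P_\bullet, N))$, and I would define the $G$-action on $\Ext^n_{\Lambda}(M,N)$ to be the one induced on cohomology by the $\Lambda G$-module structure on each term $\Hom_{\Lambda}(P_k, N)$, namely $(g \ast f)(v) = g f(g^{-1} v)$.

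First I would check that this makes $\Hom_{\Lambda}(P_\bullet, N)$ a complex of $kG$-modules, i.e. that the coboundary maps are $G$-equivariant. This is immediate because the differentials $P_{k+1} \to P_k$ and the augmentation $P_0 \to M$ are $\Lambda G$-homomorphisms, hence commute with the action: for the induced map $d^\ast f = f \circ d$ one computes $(g \ast d^\ast f)(v) = g f(d(g^{-1} v)) = g f(g^{-1} d(v)) = d^\ast(g \ast f)(v)$. Thus $G$ acts on $H^n = \Ext^n_{\Lambda}(M,N)$. That this is genuinely a left action ($1_G$ acts trivially and $g \ast (h \ast f) = (gh) \ast f$) and that it is independent of the chosen $\Lambda G$-resolution up to the usual comparison isomorphisms is routine, and I would only indicate it. For $n = 0$ this recovers the $\Lambda G$-module structure on $\Hom_{\Lambda}(M,N)$ already fixed in the text.

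For the multiplicativity $g(y \cdot x) = g(y) \cdot g(x)$ I would compute the Yoneda product through resolutions. Fix $\Lambda G$-projective resolutions $P_\bullet \to M$ and $Q_\bullet \to N$, and represent $x \in \Ext^s_{\Lambda}(M,N)$ by a cocycle $\xi \colon P_s \to N$ and $y \in \Ext^t_{\Lambda}(N,L)$ by a cocycle $\eta \colon Q_t \to L$. Lifting $\xi$ to a chain map $\xi_\bullet \colon P_{\bullet + s} \to Q_\bullet$ (with $\xi_k \colon P_{s+k} \to Q_k$), the product $y \cdot x$ is represented by $\eta \circ \xi_t \colon P_{s+t} \to L$. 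The crucial step is to show that the twisted family $g \ast \xi_\bullet$, defined by $(g \ast \xi_k)(v) = g\, \xi_k(g^{-1} v)$, is again a chain map lifting the cocycle $g \ast \xi$. This follows exactly as in the equivariance computation above, using that all differentials and augmentations of $P_\bullet$ and $Q_\bullet$ are $\Lambda G$-homomorphisms; since lifts are unique up to homotopy, $g \ast \xi_\bullet$ may be used to compute the product $g(y) \cdot g(x)$. A direct calculation then gives
\begin{equation*}
\big( (g \ast \eta) \circ (g \ast \xi_t) \big)(v) = g\, \eta\big( \xi_t(g^{-1} v) \big) = \big( g \ast (\eta \circ \xi_t) \big)(v),
\end{equation*}
so $g(y) \cdot g(x)$ and $g(y \cdot x)$ are represented by the same cocycle, which is the desired identity.

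The main obstacle is precisely the claim that $g \ast \xi_\bullet$ remains an admissible lift of $g \ast \xi$: this is what forces the use of a $\Lambda G$-projective resolution rather than an arbitrary $\Lambda$-projective one, because only then are the structural maps of the resolution $G$-equivariant and the twisting compatible with both the chain-map condition and the lifting condition. Everything else reduces to bookkeeping with the action formula $(g \ast f)(v) = g f(g^{-1} v)$.
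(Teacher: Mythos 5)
Your argument is correct. Note, however, that the paper does not actually prove this lemma: it simply cites the third statement of Lemma 4 in Martinez's paper and observes that the hypotheses that $|G|$ is invertible and $\Lambda_0$ is semisimple are never used there. Your resolution-level proof supplies exactly the missing details in the standard way: restricting a $\Lambda G$-projective resolution to a $\Lambda$-projective one, letting $G$ act on $\Hom_{\Lambda}(P_\bullet, N)$ by $(g\ast f)(v)=gf(g^{-1}v)$, and checking that twisting a lift $\xi_\bullet$ of a cocycle produces a lift of the twisted cocycle, so that the Yoneda product is $G$-equivariant. The one point worth stating explicitly (which you only gesture at) is that the action on $\Ext^n_{\Lambda}(M,N)$ is independent of the chosen $\Lambda G$-resolution because the comparison chain maps between two such resolutions can themselves be taken to be $\Lambda G$-homomorphisms; with that remark included, the proof is complete and consistent with what the paper intends.
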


\begin{proof}
This the third statement of Lemma 4 in \cite{Martinez}. Note in the original proof the assumptions that $|G|$ is invertible in $k$ and $\Lambda_0$ is semisimple are not used, so the result is true in our framework.
\end{proof}

\begin{lemma}
Let $0 \rightarrow L \rightarrow M \rightarrow N \rightarrow 0$ be an exact sequence of $\Lambda G$-modules and $X$ be a $\Lambda G$-module. Then the long exact sequences
\begin{equation*}
0 \rightarrow \Hom _{\Lambda} (X, L) \rightarrow \Hom _{\Lambda} (X, M) \rightarrow \Hom _{\Lambda} (X, N) \rightarrow \Ext _{\Lambda}^1 (X, L) \rightarrow \ldots
\end{equation*}
and
\begin{equation*}
0 \rightarrow \Hom _{\Lambda} (N, X) \rightarrow \Hom _{\Lambda} (M, X) \rightarrow \Hom _{\Lambda} (L, X) \rightarrow \Ext _{\Lambda}^1 (N, X) \rightarrow \ldots
\end{equation*}
are exact sequences of $kG$-modules.
\end{lemma}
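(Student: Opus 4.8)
The plan is to take the ordinary long exact sequences of $\Lambda$-modules — which exist by standard homological algebra applied to the short exact sequence $0 \to L \to M \to N \to 0$ viewed as a sequence of $\Lambda$-modules — and to promote them to exact sequences of $kG$-modules by verifying two things: (a) every term already carries a $kG$-action, and (b) every map in the sequence is $G$-equivariant. The $kG$-structure on the terms is supplied for free by the previous lemma: since $X, L, M, N$ are all $\Lambda G$-modules, each group $\Ext^n_{\Lambda}(X, -)$ and $\Ext^n_{\Lambda}(-, X)$ occurring in the sequences carries the natural $G$-action described there, and in particular the degree-$0$ terms are $kG$-modules via $(g \ast f)(v) = g f(g^{-1} v)$.

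For the maps induced by the morphisms $L \to M$ and $M \to N$, I would argue that, because these are morphisms of $\Lambda G$-modules, the maps they induce on $\Ext$ by functoriality commute with the $G$-action. This is precisely the naturality of the $G$-action on $\Ext^n_{\Lambda}(-,-)$ in both variables, which follows from the construction of the functors $F_g$ and the isomorphisms $\varphi_g$ recorded just before the previous lemma; concretely, post-composition (resp.\ pre-composition) with a $G$-equivariant map is again $G$-equivariant, so these induced maps are $kG$-homomorphisms.

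The only map requiring genuine work is the connecting homomorphism, and this is where I expect the main obstacle. The key step is to identify $\delta$ with Yoneda multiplication by the class $e \in \Ext^1_{\Lambda}(N, L)$ of the extension $0 \to L \to M \to N \to 0$: in the first sequence $\delta \colon \Ext^n_{\Lambda}(X, N) \to \Ext^{n+1}_{\Lambda}(X, L)$ is given by $x \mapsto e \cdot x$, and in the second $\delta \colon \Ext^n_{\Lambda}(L, X) \to \Ext^{n+1}_{\Lambda}(N, X)$ by $y \mapsto y \cdot e$. Two observations then finish the argument. First, since the short exact sequence is a sequence of $\Lambda G$-modules, its class is $G$-fixed, i.e.\ $g(e) = e$ for every $g \in G$. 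Second, by the previous lemma the Yoneda product satisfies $g(y \cdot x) = g(y) \cdot g(x)$. Combining these gives $\delta(g(x)) = e \cdot g(x) = g(e) \cdot g(x) = g(e \cdot x) = g(\delta(x))$, and symmetrically $\delta(g(y)) = g(\delta(y))$ for the second sequence, so $\delta$ is a $kG$-homomorphism.

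Once all three types of maps are seen to be $G$-equivariant, the two sequences are exact sequences in the category of $kG$-modules, since exactness is a statement about the underlying $k$-linear maps, which the classical long exact sequence already supplies. The most delicate points, which I would check carefully, are that the extension class $e$ is genuinely $G$-invariant and that the connecting map really is Yoneda multiplication by $e$ with the correct variance; I would confirm the latter against the explicit splicing description of $\delta$ so that the multiplicativity formula from the previous lemma applies verbatim.
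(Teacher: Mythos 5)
Your proposal is correct, and its skeleton matches the paper's: the paper's own proof is essentially a one-line deferral to Corollary 5 of Martinez, observing only that each term is a $kG$-module by the preceding lemma and asserting that every map in the two sequences is $G$-equivariant because Martinez's argument does not use semisimplicity of $\Lambda_0$ or invertibility of $|G|$. Where you add genuine content is in the connecting homomorphism: identifying $\delta$ with Yoneda multiplication by the extension class $e$ and then invoking the multiplicativity $g(y \cdot x) = g(y)\cdot g(x)$ from the previous lemma is a clean, self-contained substitute for the verification the paper leaves to the reference (which one would otherwise carry out at the level of projective resolutions and the snake lemma). The one point you rightly flag — that $g(e) = e$ — does hold and is worth spelling out: the action of $g$ on the class of $0 \to L \to M \to N \to 0$ is realized by twisting through $F_g$ and comparing via the isomorphisms $\varphi_g$, and since $L$, $M$, $N$ are $\Lambda G$-modules and the maps are $\Lambda G$-homomorphisms, the commutative square displayed in the paper just before the previous lemma (namely $\varphi_g \circ (g \ast f) = f \circ \varphi_g$, with $g \ast f = f$ for $G$-equivariant $f$) shows the twisted extension is equivalent to the original one. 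Sign conventions in the identification of $\delta$ with Yoneda multiplication are harmless here, since the $G$-action is $k$-linear. Both routes buy the same conclusion; yours has the advantage of not depending on the internals of the cited proof.
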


\begin{proof}
This is Corollary 5 in \cite{Martinez}. We have shown that every term is a $kG$-module in the previous lemma, so it is enough to show that every map in the above sequences is $G$-equivariant. The proof of Martinez still works as it does not rely on the assumptions that $|G|$ is invertible in $k$ and $\Lambda_0$ is semisimple.
\end{proof}

The following lemma is Lemma 8 in \cite{Martinez}. As before, The conditions that $|G|$ is invertible in $k$ and $\Lambda_0$ is semisimple are not used in his proof.

\begin{lemma}
Let $P$ be a graded finitely generated projective $\Lambda G$-module, $N$ be a graded finitely generated $\Lambda G$-module, and $W$ be a $kG$-module. Then the linear map
\begin{equation*}
\theta: \Hom _{\Lambda} (P, N) \otimes_k W \rightarrow \Hom _{\Lambda G} (P \otimes_k kG, N \otimes_k W)
\end{equation*}
defined by $\theta (f \otimes w) (v \otimes g) = (g \ast f) (v) \otimes gw$ is a natural isomorphism of $\Lambda G$-modules.
\end{lemma}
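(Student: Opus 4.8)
The plan is to prove that $\theta$ is bijective by writing down an explicit inverse, after checking it is well defined; the compatibility with the $G$-actions and naturality will drop out of the same computations. Conceptually $\theta$ is nothing but the induction-restriction (Frobenius) adjunction: $P \otimes_k kG$ is the $\Lambda G$-module induced from ${}_\Lambda P$, so by the Nakayama relations $\Hom_{\Lambda G}(P \otimes_k kG, N \otimes_k W) \cong \Hom_\Lambda({}_\Lambda P, (N \otimes_k W)\!\downarrow_\Lambda)$, and since $\Lambda$ acts on $N \otimes_k W$ only through the factor $N$ this last space is $\Hom_\Lambda(P, N) \otimes_k W$ (using that $W$ is finite-dimensional, which holds because $kG$ is). I would present the direct form of this identification, since it is exactly the map $\theta$ and keeps the twisted actions visible.

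First I would fix the module structures in play. On $P \otimes_k kG$ the algebra $\Lambda$ acts through $P$, by $\mu \cdot (v \otimes g) = \mu v \otimes g$, while $G$ acts diagonally, $g' \cdot (v \otimes g) = g'(v) \otimes g'g$; on $N \otimes_k W$ the action is the same pattern, $\Lambda$ through $N$ and $G$ diagonally. With these fixed, the first real check is that $\theta(f \otimes w)$ lands in $\Hom_{\Lambda G}$. Its $\Lambda$-linearity follows from the identity $g^{-1}(\mu v) = {}^{g^{-1}}\!\mu \cdot g^{-1}(v)$ (valid since $P$ is a $\Lambda G$-module) together with the $\Lambda$-linearity of $f$, and its $G$-equivariance is a one-line rewrite using $(g'g) f((g'g)^{-1} g'(v)) = g'g\, f(g^{-1}(v))$. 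Linearity of $\theta$ in the argument $f \otimes w$ is immediate, so $\theta$ is a well-defined $k$-linear map.

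The heart of the argument is the inverse. Given $\Phi \in \Hom_{\Lambda G}(P \otimes_k kG, N \otimes_k W)$, I would restrict it to the subspace $P \otimes 1_G$. Because $\mu \cdot (v \otimes 1_G) = \mu v \otimes 1_G$, the assignment $v \mapsto \Phi(v \otimes 1_G)$ is a $\Lambda$-homomorphism $P \to N \otimes_k W$, hence (as $W$ is finite-dimensional) an element of $\Hom_\Lambda(P, N) \otimes_k W$; this element is the candidate $\theta^{-1}(\Phi)$. The verification that the two composites are the identity rests on the single observation that in $P \otimes_k kG$ one has $v \otimes g = g \cdot \big(g^{-1}(v) \otimes 1_G\big)$. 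Applying the $G$-equivariant map $\Phi$ gives $\Phi(v \otimes g) = g \cdot \Phi\big(g^{-1}(v) \otimes 1_G\big)$, so $\Phi$ is determined by its restriction to $P \otimes 1_G$, and expanding this equality reproduces the defining formula of $\theta$; conversely $\theta(f \otimes w)(v \otimes 1_G) = f(v) \otimes w$ because $1_G \ast f = f$, so restriction recovers $f \otimes w$.

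Finally I would record the equivariance and naturality. Here the one subtle point is the correct $G$-action on the target: it is not the fixed-point-trivial action but precomposition with right translation on the $kG$-factor, $g' \star \Phi = \Phi \circ (\mathrm{id}_P \otimes R_{g'})$, which preserves $\Lambda G$-linearity because left multiplication on $P \otimes_k kG$ commutes with right translation. With this action $\theta$ is $G$-equivariant, the crucial identity being $g \ast (g' \ast f) = (gg') \ast f$; naturality in $N$ is then immediate from the formula. The main obstacle is organizational rather than conceptual: one must keep rigorously separate the twisted action $g \ast f$, the two diagonal actions, and the right-translation action on the target, and confirm that $\theta(f \otimes w)$ is genuinely $\Lambda G$-linear and not merely $\Lambda$-linear. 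Once the reduction $v \otimes g = g \cdot (g^{-1}(v) \otimes 1_G)$ is in hand, every remaining verification is a short and forced computation.
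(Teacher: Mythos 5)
Your proof is correct, but it is worth noting that the paper itself gives no argument here: it simply cites Lemma~8 of Martinez's paper and remarks that the hypotheses ``$|G|$ invertible'' and ``$\Lambda_0$ semisimple'' are never used in his proof, so the statement survives in the modular setting. What you have written is a self-contained substitute for that citation, and it is the expected one: you realize $\theta$ as the Frobenius/Nakayama adjunction $\Hom_{\Lambda G}(P\otimes_k kG, -)\cong \Hom_{\Lambda}({}_{\Lambda}P, -\!\downarrow)$ combined with $\Hom_{\Lambda}(P, N\otimes_k W)\cong \Hom_{\Lambda}(P,N)\otimes_k W$ (legitimate since $W$ is finite-dimensional), and you make the inverse explicit via the reduction $v\otimes g = g\cdot(g^{-1}(v)\otimes 1_G)$, which shows any $\Lambda G$-map out of $P\otimes_k kG$ is determined by its restriction to $P\otimes 1_G$. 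All the verifications check out: $\Lambda$-linearity of $\theta(f\otimes w)$ uses exactly ${}^{g^{-1}}\!\mu\, g^{-1}(v)=g^{-1}(\mu v)$, the two composites are the identity, and your computation $g\ast(g'\ast f)=(gg')\ast f$ gives equivariance. The one point where you had to supply something the statement leaves implicit is the module structure on the target $\Hom_{\Lambda G}(P\otimes_k kG, N\otimes_k W)$; your choice (precomposition with right translation on the $kG$-factor, which commutes with the left $\Lambda G$-action) is the only sensible one and is the one for which $\theta$ is equivariant, so this is a feature of your write-up rather than a gap. Your version buys the reader a proof they can check without consulting Martinez; the paper's version buys brevity at the cost of asking the reader to re-examine someone else's proof for hidden uses of semisimplicity.
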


The following lemma is similar to Proposition 9 in \cite{Martinez}, with a small modification.

\begin{lemma}
Let $M, N$ be graded $\Lambda G$-modules such that $M$ viewed as a $\Lambda$-module is a generalized Koszul. Then for every $s \geqslant 0$ we have a natural $\Lambda G$-module isomorphism
\begin{equation*}
\tilde{\theta}: \Ext _{\Lambda}^s (M, N) \otimes_k kG \rightarrow \Ext _{\Lambda G} ^s (M \otimes_k kG, N \otimes_k kG).
\end{equation*}
\end{lemma}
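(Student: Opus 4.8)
The plan is to build both sides from a single linear resolution of $M$ and then transport it across the isomorphism $\theta$ of the previous lemma. Since $M$ is generalized Koszul as a $\Lambda$-module, I would fix a minimal graded projective resolution
\[
\cdots \to P^n \to \cdots \to P^1 \to P^0 \to M \to 0
\]
of $\Lambda$-modules with each $P^i$ generated in degree $i$. By the lemma above (which shows that each term of such a linear resolution is a finitely generated $\Lambda G$-module, with $P^i \cong \Lambda \otimes_{\Lambda_0}\Omega^i(M)_i$), every $P^i$ is in fact a $\Lambda G$-module, and the differentials $d_i$ are $G$-equivariant by the same computation that makes $P^0 \to M$ equivariant. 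Thus $P^\bullet$ is at once a $\Lambda$-projective resolution of $M$ and a complex of $\Lambda G$-modules. Two facts drive the proof: first, $P^\bullet$ computes $\Ext^s_\Lambda(M,N)$; second, applying $-\otimes_k kG$ yields, by the previous lemma, a $\Lambda G$-projective resolution $P^\bullet \otimes_k kG \to M\otimes_k kG$, so it computes $\Ext^s_{\Lambda G}(M\otimes_k kG, N\otimes_k kG)$.

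Next I would identify the two sides as cohomology of cochain complexes. Because each $d_i$ is $G$-equivariant, the induced maps make $\Hom_\Lambda(P^\bullet, N)$ a complex of $kG$-modules whose cohomology is $\Ext^s_\Lambda(M,N)$ carrying exactly the $G$-action defined in the earlier lemma; since $kG$ is free over $k$, tensoring is exact and
\[
\Ext^s_\Lambda(M,N) \otimes_k kG \cong H^s\big(\Hom_\Lambda(P^\bullet, N) \otimes_k kG\big).
\]
On the other side,
\[
\Ext^s_{\Lambda G}(M\otimes_k kG, N\otimes_k kG) \cong H^s\big(\Hom_{\Lambda G}(P^\bullet \otimes_k kG, N\otimes_k kG)\big).
\]
I would then apply the previous lemma degreewise, with $P=P^i$ and $W=kG$, to obtain $\Lambda G$-module isomorphisms
\[
\theta^i : \Hom_\Lambda(P^i, N)\otimes_k kG \xrightarrow{\ \sim\ } \Hom_{\Lambda G}(P^i\otimes_k kG, N\otimes_k kG).
\]

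The crucial point is that the $\theta^i$ assemble into an isomorphism of cochain complexes. The differential on the left is $\Hom_\Lambda(d_i,N)\otimes \mathrm{id}_{kG}$ and on the right is precomposition with $d_i\otimes\mathrm{id}_{kG}$, and the square intertwining them is precisely the naturality of $\theta$ in its contravariant argument applied to the $\Lambda G$-homomorphism $d_i$. Passing to cohomology then yields the desired natural isomorphism $\tilde\theta$ in each degree $s$.

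The main obstacle is bookkeeping rather than a genuine difficulty: I must check that $\theta^\bullet$ is a chain map (naturality of $\theta$) and that the resulting $\tilde\theta$ is $G$-equivariant and natural. The $G$-equivariance is inherited from the fact that each $\theta^i$ is already a $\Lambda G$-module map together with the observation that the $G$-action on $\Ext^s_\Lambda(M,N)$ is exactly the one induced on the cohomology of the $kG$-complex $\Hom_\Lambda(P^\bullet,N)$, matching the action fixed earlier. Naturality in $N$ is immediate from naturality of $\theta$, while naturality in $M$ follows from the standard comparison theorem for projective resolutions, choosing the comparison maps to be $\Lambda G$-linear. This is Martinez's Proposition 9 with the hypotheses that $|G|$ be invertible and $\Lambda_0$ be semisimple removed; the only input beyond his formal argument is the earlier lemma, which supplies the $\Lambda G$-module structure on the linear resolution that replaces his appeal to semisimplicity.
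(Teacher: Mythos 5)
Your proposal is correct and follows essentially the same route as the paper: take a minimal linear $\Lambda$-projective resolution of $M$, use the preceding lemma to endow each term with a $\Lambda G$-structure, apply $\Hom_\Lambda(-,N)\otimes_k kG$, and transport along the natural isomorphism $\theta$ termwise before passing to cohomology. The paper's own proof is terser (it does not spell out the chain-map and equivariance checks you record), but the argument is the same.
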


\begin{proof}
Take a minimal graded projective resolution for  $_{\Lambda} M$:
\begin{equation*}
\xymatrix{ \ldots \ar[r] & P^n \ar[r] & \ldots \ar[r] & P^0 \ar[r] & _{\Lambda} M \ar[r] & 0}.
\end{equation*}
By Lemma 5.5, every $P^i$ is actually a finitely generated $\Lambda G$-module. Applying the functor $\Hom _{\Lambda} (-, N)$ and $- \otimes_k kG$, we get
\begin{equation*}
0 \rightarrow \Hom _{\Lambda} (P^0, N) \otimes_k kG \rightarrow \ldots \rightarrow \Hom _{\Lambda} (P^s, N) \otimes_k kG \rightarrow \ldots.
\end{equation*}
and deduce that the $s$-th homology is $\Ext _{\Lambda} ^s (M, N) \otimes_k kG$. By the previous lemma, this complex is isomorphic to
\begin{equation*}
0 \rightarrow \Hom _{\Lambda G} (P^0 \otimes_k kG, N \otimes_k kG) \rightarrow \ldots \rightarrow \Hom _{\Lambda G} (P^s \otimes_k kG, N \otimes_k kG) \rightarrow \ldots,
\end{equation*}
by a natural isomorphism $\theta$. Observe that $s$-th homology of the last sequence is $\Ext _{\Lambda G} ^s (M \otimes_k kG, N \otimes_k kG)$. Therefore, $\theta$ determines a $\Lambda G$-module isomorphism $\tilde{\theta}$ on homologies.
\end{proof}

Now we can obtain:

\begin{theorem}
Let $M$ be a graded $\Lambda G$-module and $\Gamma = \Ext _{\Lambda} ^{\ast} (M, M)$.  If as a $\Lambda$-module $M$ it is generalized Koszul, then $\Ext _{\Lambda \ast G} ^{\ast} (M \otimes_k kG, M \otimes_k kG)$ is isomorphic to the skew group algebra $\Gamma G$.
\end{theorem}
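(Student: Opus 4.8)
The plan is to promote the graded vector space isomorphism of the preceding lemma to an isomorphism of graded algebras. Taking $N = M$ in Lemma 5.11, for each $s \geqslant 0$ we obtain a natural isomorphism
\[
\tilde{\theta}: \Ext_{\Lambda}^s (M, M) \otimes_k kG \longrightarrow \Ext_{\Lambda G}^s (M \otimes_k kG, M \otimes_k kG),
\]
and summing over $s$ identifies the underlying graded vector space of $\Gamma G = \bigoplus_{s \geqslant 0} \Ext_{\Lambda}^s(M,M) \otimes_k kG$ with the extension algebra $\Ext_{\Lambda G}^{\ast}(M \otimes_k kG, M \otimes_k kG)$. Everything therefore reduces to checking that $\tilde{\theta}$ intertwines the skew group multiplication on $\Gamma G$ with the Yoneda product on the right; that is, for $x \in \Ext_{\Lambda}^s(M,M)$, $y \in \Ext_{\Lambda}^t(M,M)$ and $g, h \in G$,
\[
\tilde{\theta}(x \otimes g) \cdot \tilde{\theta}(y \otimes h) = \tilde{\theta}\bigl( (x \cdot g(y)) \otimes gh \bigr),
\]
where $g(y)$ is the $G$-action of Lemma 5.8 and the dots inside denote Yoneda products in $\Ext_{\Lambda}^{\ast}(M,M)$.

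To verify this identity I would work at the level of the cochain complexes defining $\tilde{\theta}$. Reusing the resolution from the proof of the preceding lemma, fix a minimal graded $\Lambda$-projective resolution $P^{\bullet} \to M$, which by Lemma 5.5 consists of $\Lambda G$-modules; then $Q^{\bullet} = P^{\bullet} \otimes_k kG$ is a $\Lambda G$-projective resolution of $M \otimes_k kG$ by Lemma 5.6. Lemma 5.10 identifies $\Hom_{\Lambda G}(Q^{\bullet}, M \otimes_k kG)$ with $\Hom_{\Lambda}(P^{\bullet}, M) \otimes_k kG$ through the natural map $\theta$, so a class $x \otimes g$ is represented by a cocycle $\xi \otimes g$ with $\xi \in \Hom_{\Lambda}(P^s, M)$ and $\theta(\xi \otimes g)(v \otimes a) = (a \ast \xi)(v) \otimes ag$.

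The Yoneda product on the right is computed by lifting a representing cocycle to a chain self-map of $Q^{\bullet}$ and composing. The crux is to follow how the group element $g$ carried by the first factor interacts, through this lifting, with a cocycle $\eta$ representing the second factor. Here the functor $F_g$ and the $\Lambda$-isomorphisms $\varphi_g: X \to X^g$ introduced before Lemma 5.8 do the bookkeeping: transporting $\eta$ across the lift forced by $g$ replaces its class by $g(\eta)$, which is precisely the $G$-action on $\Ext_{\Lambda}^t(M,M)$. Combining this with the multiplicativity $g(y \cdot x) = g(y) \cdot g(x)$ of that action (Lemma 5.8) produces the twisted factor $x \cdot g(y)$ together with the composed group element $gh$, matching the skew group product displayed above.

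The step I expect to be the main obstacle is purely the bookkeeping of conventions: pinning down the lifting so that the twist emerging from $g$ is $g(y)$ rather than $g^{-1}(y)$, that it acts on the correct (second) factor, and that these are consistent with the order of the Yoneda product and with the action $(a \ast f)(v) = a f(a^{-1}v)$. Once the cochain-level identity is established on cocycles, the naturality of $\theta$ and $\tilde{\theta}$ guarantees that it descends to cohomology and respects the grading, yielding the graded algebra isomorphism $\Ext_{\Lambda G}^{\ast}(M \otimes_k kG, M \otimes_k kG) \cong \Gamma G$. This is exactly the route of Martinez \cite{Martinez}, whose computations use neither the invertibility of $|G|$ in $k$ nor the semisimplicity of $\Lambda_0$.
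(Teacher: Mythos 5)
Your proposal follows exactly the route the paper takes: the paper's entire proof is the citation ``This is Theorem 10 in \cite{Martinez},'' and your outline --- specializing the isomorphism $\tilde{\theta}$ of the preceding lemma to $N=M$ and checking compatibility of the Yoneda product with the skew group product at the cochain level, noting that neither the invertibility of $|G|$ nor the semisimplicity of $\Lambda_0$ is used --- is precisely Martinez's argument as the paper invokes it. No discrepancy to report.
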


\begin{proof}
This is Theorem 10 in \cite{Martinez}.
\end{proof}


\begin{thebibliography}{99}
\bibitem{ARS} M. Auslander, I. Reiten, and S. Smal{\o}, \textit{Galois actions on rings and finite Galois coverings}, Math. Scand. 65 (1989), 5-32.
\bibitem{Auslander} M. Auslander, I. Reiten, and S. Smal{\o}, \textit{Representation theory of Artin algebras}, Cambridge Studies in Advanced Mathematics 36, Cambridge University Press, (1997).
\bibitem{Bautista} R. Bautista, P. Gabriel, A.V. Roiter and L. Salmer$\acute{o}$n, \textit{Representation-finite algebras and multiplicative bases}, Invent. math. 81 (1985), 217-285.
\bibitem{BGS} A. Beilinson, V. Ginzburg, and W. Soergel, \textit{Kosuzl duality patterns in representation theory}, J. Amer. Math. Soc. 9 (1996), 473-527.
\bibitem{Boisen} P. Boisen, \textit{The representation theory of full group-graded algebras}, J. Algebra 151 (1992), 160-179.
\bibitem{Bongartz} K. Bongartz and P. Gabriel, \textit{Covering spaces in representation theory}, Invent. math. 65 (1982), 331-378.
\bibitem{Cohen} M. Cohen, S. Montgomery, \textit{Group-graded rings, smash products, and group actions}, Tran. Amer. Math. Soc. 282 (1984), 237-258.
\bibitem{Dionne} J. Dionne, M. Lanzilotta, and D. Smith, \textit{Skew group algebras of piecewise hereditary algebras are piecewise hereditary}, J. Pure Appl. Algebra 213 (2009), 241–249.
\bibitem{Green} E. L. Green, I. Reiten, and {\O}. Solberg, \textit{Dualities on generalized Koszul algebras}, Mem. Amer. Math. Soc. 159 (2002).
\bibitem{Li1} L. Li, \textit{A chracterization of finite EI categories with hereditary category algebras}, J. Algebra 345 (2011), 213-241.
\bibitem{Li2} L. Li, \textit{On the representation types of finite EI categories},  J. Algebra 402 (2014), 178-218.
\bibitem{Li3} L. Li, \textit{A generalized Koszul theory and its application}, Trans. Amer. Math. Soc. 366 (2014), 931-977.
\bibitem{Li4} L. Li, \textit{A generalized Koszul theory and its relation to the classical theory}, preprint, arXiv: 1206.4748.
\bibitem{Loupias} M. Loupias, \textit{Indecomposable representations of finite ordered sets}, Springer, Lecture Notes in Mathematics 488 (1975), 201-209.
\bibitem{Madsen} D. Madsen, \textit{On a common generalization of Koszul duality and tilting equivalence}, Adv. Math. 227 (2011), 2327-2348.
\bibitem{Marcus} A. Marcus, \textit{Representation theory of group graded algebras}, Nova Science Publishers, Inc., Commack, NY, (1999).
\bibitem{Martinez} R. Martinez, \textit{Skew group algebras and their Yoneda algebras}, Math. J. Okayama Univ. 43 (2001), 1-16.
\bibitem{Mazorchuk} V. Mazorchuk, S. Ovsienko, and C. Stroppel, \textit{Quadratic duals, Koszul dual functors, and applications}, Trans. Amer. Math. Soc. 361 (2009), 1129-1172.\
\bibitem{Passman} D. Passman, \textit{Group rings, crossed products and Galois theory}, CBMS Regional Conference Series in Mathematics (64) 1986.
\bibitem{Reiten} I. Reiten, C. Riedtmann, \textit{Skew group algebras in the representation theory of Artin algebras}, J. Algebra 92 (1985), 224-282.
\bibitem{Witherspoon} A. Shepler, S. Witherspoon, \textit{Group actions on algebras and the graded Lie structure of Hochschild cohomology}, J. Algebra 351 (2012), 350-381.
\bibitem{Webb1}P. Webb, \textit{An introduction to the representations and cohomology of categories}, Group Representation Theory, M. Geck, D. Testerman and J. Th��venaz(eds), EPFL Press (Lausanne) (2007), 149-173.
\bibitem{Webb2} P. Webb, \textit{Standard stratifications of EI categories and Alperin's weight conjecture}, J. Algebra 320 (2008), 4073-4091.
\bibitem{Woodcock} D. Woodcock, \textit{Cohen-Macaulay complexes and Koszul rings}, J. London Math. Soc. (2) 57 (1998), 398-410.
\bibitem{Xu1} F. Xu, \textit{Representations of categories and their applications}, J. Algebra 317 (2007), 153-183.
\bibitem{Xu2} F. Xu, \textit{Support varieties for transporter category algebras}, preprint, arXiv: 1105.4731.
\bibitem{Zhong} Y. Zhong, \textit{Homological dimension of skew group rings and cross product}, J. Algebra 164 (1994), 101-123.
\end{thebibliography}
\end{document}